\newlist{primenumerate}{enumerate}{1}
\setlist[primenumerate,1]{label={\arabic*$'$}}
\DeclareSymbolFont{cyrletters}{OT2}{wncyr}{m}{n}
\DeclareMathSymbol{\Sha}{\mathalpha}{cyrletters}{"58}
\newtheorem{theorem}{Theorem}[section]
\newtheorem{lemma}[theorem]{Lemma}
\newtheorem{proposition}[theorem]{Proposition}
\newtheorem{corollary}[theorem]{Corollary}
\newtheorem{definition}[theorem]{Definition}
\newtheorem*{theorem*}{Theorem}
\numberwithin{equation}{section}
\theoremstyle{remark}
\newtheorem{remark}[theorem]{Remark}
\newcommand{\new}{\mathrm{new}}
\newcommand{\val}{\mathrm{val}}
\newcommand{\SL}{\mathrm{SL}}
\newcommand{\NSq}{\mathrm{NSq}}
\newcommand{\Sq}{\mathrm{Sq}}
\newcommand{\Z}{\mathbb{Z}}
\theoremstyle{plain} 
\newtheorem*{intr@thm}{\intr@thmname}
\newtheorem*{c@njecture}{\conjn@name}
\newcommand{\myl@bel}[2]{
 \protected@write \@auxout {}{\string \newlabel {#1}{{#2}{\thepage}{#2}{#1}{}} }
 \hypertarget{#1}{}
 } 
\newcommand{\mylabel}[2]{#2\def\@currentlabel{#2}\label{#1}}
\title[]{Murmurations of Modular Forms and $p$-power Coefficients}
\author[D.~Kundu]{Debanjana Kundu}
\address[DK]{University of Texas - Rio Grande Valley, USA}
\email{debanjana.kundu@utrgv.edu \textrm{or} tuli93@gmail.com}
\author[K.~M\"uller]{Katharina M\"uller}
\address[Müller]{Institut für Theoretische Informatik, Mathematik und Operations Research, Universität der Bundeswehr München, Werner-Heisenberg-Weg 39, 85577 Neubiberg, Germany}
\email{katharina.mueller@unibw.de}
\keywords{Murmurations, prime power coefficients of a modular form}
\subjclass[2020]{Primary: 11F11, 11F30}
\begin{document}

\begin{abstract}
We extend the work of N.~Zubrilina on murmuration of modular forms to the case when prime-indexed coefficients are replaced by squares of primes.
Our key observation is that the shape of the murmuration density is the same.
\end{abstract}

\maketitle

\section{Introduction}

The authors of \cite{hlop22} were the first to notice the oscillating pattern of the average value of the $p$-th Dirichlet coefficients of fixed rank elliptic curves for a prime $p$ in a fixed conductor range.
This kick-started the study of what is now known as `murmurations' of elliptic curves.
This pattern was then detected in more general families of arithmetic $L$-functions, such as those associated to weight $k$ holomorphic modular cusp forms for $\Gamma_0(N)$ with conductor in a geometric interval range $[M,cM]$ and a fixed root number\footnote{\href{https://math.mit.edu/~drew/RubinsteinSarnakLetter.pdf}{Sutherland's letter to Rubinstein and Sarnak}}. 
A.~Sutherland also made a striking observation that the average of $a_f(P)$ over this family for a single prime $P \sim M$ converges as a continuous looking function of $P/M$.
More recently, N.~Zubrilina established a case of the correlation phenomenon between Fourier coefficients of families of modular forms and their root numbers in \cite{Zub23}.
The purpose of this article is to extend the main result of Zubrilina to the case of prime power coefficients of modular forms.

The first main result proven by Zubrilina for weight 2 modular forms is the following:

\begin{theorem*}
Let $H^{\new}(N)$ be a Hecke basis for trivial character weight $2$ cusp new forms for $\Gamma_0(N)$ with $f \in H^{\new}(N)$ normalized to have lead coefficient 1. Let $\epsilon(f)$ be the root number of $f$, let $a_f(p)$ be the $p$-th Fourier coefficient of $f$, and let $\lambda_f(p):=a_f(p)/\sqrt{p}$.
Let $X,Y$, and $P$ be parameters going to infinity with $P$ prime; assume further that $Y=(1+o(1))X^{1-\delta_2}$ and $P\ll X^{1+\delta_1}$ for some $\delta_1, \delta_2 >0$ with $2 \delta_1 < \delta_2 < 1$.
Let $y:=P/X$.
Then 
{\allowdisplaybreaks
\begin{align*}
\frac{\displaystyle\sum_{\substack{N\in[X,X+Y] \\ \textrm{square-free}}} \sum_{f\in H^{\new}(N)} \lambda_f(P)\sqrt{P} \epsilon(f)}{\displaystyle\sum_{\substack{N\in[X,X+Y] \\ \textrm{square-free}}} \sum_{f\in H^{\new}(N)} 1 } = &\frac{12}{\pi \prod_{p} \left( \frac{1}{p(p+1)}\right)} \left(A \sqrt{y} + B \sum_{1\leq r \leq 2\sqrt{y}} C(r)\left(\sqrt{4y-r^2}\right) - \pi y\right) \\
& + O_{\epsilon}\left( X^{-\delta'+ \epsilon}+ \frac{1}{P}\right).
\end{align*}
}
where 
{\allowdisplaybreaks
\begin{align*}
A & = \prod_p\left(1+\frac{p}{(p+1)^2(p-1)}\right)\\
B & = \prod_p\left(\frac{p^4 - 2p^2 - p +1}{(p^2-1)^2}\right)\\
C(r) & = \prod_{p\mid r}\left( 1+ \frac{p^2}{p^4 - 2p^2 - p +1}\right).
\end{align*}}
In particular, for any $\delta_1 < \frac{2}{9}$, one can find $\delta_2$ for which $\delta' > 0$.
\end{theorem*}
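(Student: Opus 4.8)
\emph{From root numbers to a twisted trace.} This is Zubrilina's theorem \cite{Zub23}; the proof runs through the Eichler--Selberg trace formula (in the form that incorporates Atkin--Lehner operators). In weight $2$ with trivial nebentypus, a newform $f$ of squarefree level $N$ has completed $L$-function with sign $\epsilon(f)=-\eta_f$, where $\eta_f\in\{\pm1\}$ is its Fricke ($W_N$) eigenvalue; equivalently $\epsilon(f)=-\mu(N)a_f(N)$. For the range of $y$ of interest the interval $[X,X+Y]$ has length $Y<P$, so each admissible $N$ is coprime to $P$ (the at most one exception being absorbed into the error), and $T_P$ commutes with $W_N$, whence
\[
\sum_{f\in H^{\new}(N)}\lambda_f(P)\sqrt P\,\epsilon(f)=-\sum_{f\in H^{\new}(N)}a_f(P)\,\eta_f=-\,\mathrm{Tr}\!\left(T_PW_N\mid S_2^{\new}(\Gamma_0(N))\right).
\]
The denominator $\sum_N\dim S_2^{\new}(\Gamma_0(N))$ is handled by the untwisted formula; after restriction to squarefree $N\in[X,X+Y]$ its leading term is $\asymp\tfrac1{12}\sum_N N\prod_{p\mid N}(1-\tfrac1{p(p+1)})$, producing the prefactor $\tfrac{12}{\pi}/\prod_p(\cdots)$.

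\emph{The trace formula and the parameter $r$.} Next I would sieve the new-subspace trace down to the full spaces $S_2(\Gamma_0(M))$, $M\mid N$, by M\"obius inversion, and apply the Eichler--Selberg formula for $\mathrm{Tr}(T_PW_N\mid S_2(\Gamma_0(M)))$ with $W_N$ the Fricke operator. The Fricke normalization forces the contributing integral matrices of determinant $PN$ into a congruence class with $N\mid t$ for the trace $t$; writing $t=Nr$, the elliptic term is indexed by integers $r$ with $N^2r^2<4PN$, i.e. $|r|<2\sqrt{P/N}\sim 2\sqrt y$, each term carrying a Hurwitz-type class number $H(4PN-N^2r^2)=H\big(N(4P-Nr^2)\big)$ weighted by local embedding numbers at the primes dividing $N$, together with hyperbolic (divisor) and parabolic/dual terms.

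\emph{Averaging over the level.} For each fixed $r$ I would substitute the Dirichlet class number formula, $H(D)=\tfrac1\pi\sqrt D\,L_D$ with $L_D$ a normalized $L$-value (essentially $L(1,\chi_D)$ with a divisor correction), so that the elliptic contribution becomes $\tfrac1\pi\sum_r\sum_N\sqrt{4PN-N^2r^2}\,L_D\cdot(\text{local weights})$, and $\sqrt{4PN-N^2r^2}=N\sqrt{4P/N-r^2}\sim X\sqrt{4y-r^2}$ on $[X,X+Y]$. Expanding $L_D=\sum_{n\le Z}\chi_D(n)/n+(\text{error})$ and interchanging summations, the square values of $n$ give the main term: summing the local data over squarefree $N\in[X,X+Y]$ assembles the Euler product $B$, the dependence of the weight on whether a prime divides $r$ produces the multiplicative $C(r)$, and the $r=0$ term --- where every prime behaves as in the $p\mid r$ case --- is separated off as $A\sqrt y$ with $A=B\prod_p(1+\tfrac{p^2}{p^4-2p^2-p+1})$. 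The hyperbolic and parabolic terms, together with the lower-order part of the elliptic sum, contribute the linear piece $-\pi y$ (in harmony with $\int_0^{2\sqrt y}\sqrt{4y-u^2}\,du=\pi y$). Dividing by the denominator produces the closed form in the statement.

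\emph{The main obstacle.} The heart of the matter is the error term. Since $Y\asymp X^{1-\delta_2}$ is short, controlling simultaneously the truncation error of $L_D$ and the contribution of the non-square $n$ --- which after interchange is a character sum over $N$ in a short interval --- demands genuine cancellation in the $n$-aspect \emph{and} the $N$-aspect at once, via P\'olya--Vinogradov/large-sieve bounds for the $N$-sum balanced against the cutoff $Z$, the length $Y$, and the size $P\ll X^{1+\delta_1}$. This balancing is exactly where $2\delta_1<\delta_2<1$ enters, and optimizing yields $O_\epsilon(X^{-\delta'+\epsilon}+1/P)$ for an explicit $\delta'=\delta'(\delta_1,\delta_2)>0$; the last sentence is then a one-line check that for every $\delta_1<\tfrac29$ the admissible interval of $\delta_2$ contains a value with $\delta'>0$.
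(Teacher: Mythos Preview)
This theorem is quoted in the paper as Zubrilina's result \cite{Zub23}; the paper does not reprove it but instead establishes the analogous statement for $\lambda_f(P^2)$ by the same template. Your outline matches that template well: trace formula with Atkin--Lehner twist, reduction to Hurwitz/Gauss class numbers indexed by $r$, Dirichlet's class number formula, truncation of the $L$-series, and extraction of the Euler products $A,B,C(r)$ from the square $n$'s while bounding the non-square $n$'s by character-sum estimates over short intervals. Your identification of the $r=0$ term as the source of $A\sqrt{y}$ and of the divisor term as the source of $-\pi y$ is correct.

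Two points of divergence from the paper's (and Zubrilina's) actual argument are worth flagging. First, you propose to M\"obius-sieve from $S_2^{\new}$ down to full spaces $S_2(\Gamma_0(M))$ and then apply Eichler--Selberg; the paper instead invokes the Skoruppa--Zagier formula directly on the new subspace, which already yields the clean expression $\tfrac12 H_1(-4PN)+\sum_{0<r\le 2\sqrt{P/N}}H_1(r^2N^2-4PN)-\sigma_1(P)$ with no inclusion--exclusion. Second, and more substantively, you absorb the passage from $H_1$ to $h$ into the phrase ``$L_D$ with a divisor correction,'' but in the actual proof the sum over square divisors $d^2\mid r^2N-4P$ is a structural variable on the same footing as $n$: one must classify the residue sets $\mathcal{R}_{r,d}$, split the $(n,d)$-sum into small/large ranges, and control the large-$d$ tail separately. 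This is where much of the bookkeeping lives, and your sketch does not yet account for it. The character-sum input is also slightly more specific than P\'olya--Vinogradov: one needs cancellation in $\sum_{N}\mu^2(N)\chi(N)$ over short intervals with non-principal $\chi$, which in \cite{Zub23} is Lemma~6.7.
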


As explained in \emph{op. cit.} the formula obtained above (referred to as the murmuration density) comes from applying the Eichler--Selberg trace formula to the composition of Hecke and Atkin--Lehner operators.
Using this technique allows for interpreting the sum in terms of certain class numbers, the averages of which in short intervals can then be handled via the class number formula.

The main result of this paper is Theorem~\ref{main theorem} which we record below for the convenience of the reader.

\begin{theorem*}
Let $H^{\new}(N)$ be a Hecke basis for trivial character for weight 2 cusp forms for $\Gamma_0(N)$ with $f\in H^{\new}(N)$ normalized to have leading coefficient 1.
Let $\epsilon(f)$ be the root number of $f$ and let $a_f(p)$ be the $p$-th Fourier coefficient of $f$, and set $\lambda_f(p) = \frac{a_f(p)}{\sqrt{p}}$.
Let $P$ be a prime, and suppose that the parameters $P$, $X$, and $Y$ go to infinity.
Set $Y= (1+o(1))X^{1-\delta}$ and $P^2\ll X^{1+\delta_2}$
where $0<\delta_2 < \delta < \frac{9}{11}$ and $\frac{\delta}{9} + \frac{\delta_2}{2} < \frac19$.
Set $\delta'=\delta_2 - \frac{\delta}{2}$.
Then for $A, B$, and $C(r)$ as defined before and writing $y=\frac{P^2}{X}$,
{\allowdisplaybreaks
\begin{align*}
\frac{\displaystyle\sideset{}{'}\sum_{\substack{N\in [X, X+Y]}}\sum_{f\in H^{\new}(N)} P \lambda_{f}(P^2)\epsilon(f)}{\displaystyle\sideset{}{'}\sum_{\substack{N\in [X, X+Y]}}\sum_{f\in H^{\new}(N)} 1} = & \frac{12}{\pi \prod_p\left( 1 - \frac{1}{p(p+1)}\right)}\left(A\sqrt{y} + B\sum_{r\leq \sqrt{y}} C(r)\left( \sqrt{4y-r^2}\right) - \pi y\right) \\
& + O_\epsilon\left( X^{\delta' + \epsilon} +\frac{1}{P}\right).
\end{align*}}
Here, the notation $\sideset{}{'}\sum$ indicates that the sum is over square-free $N$.
\end{theorem*}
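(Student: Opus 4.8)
\textbf{Reduction to a trace formula.} The plan is to rewrite the numerator as a sum of Atkin--Lehner-twisted Hecke traces and then average over the short interval, following Zubrilina's argument with $P$ replaced by $P^2$ throughout. First I would record that for squarefree $N$ and $f\in H^{\new}(N)$ of weight $2$ one has $\epsilon(f) = -\mu(N)\sqrt{N}\,\lambda_f(N) = -\mu(N)a_f(N)$, a standard consequence of the identity $a_f(p)=-w_p$ at each $p\mid N$ (with $w_p$ the $W_p$-eigenvalue) together with the fact that the root number is minus the Fricke eigenvalue. Since $P\lambda_f(P^2)=a_f(P^2)$ and Hecke coefficients of newforms are multiplicative, for $P\nmid N$ this gives $P\lambda_f(P^2)\epsilon(f) = -\mu(N)\,a_f(P^2N)$, and summing over the newform basis identifies
\[
\sum_{f\in H^{\new}(N)} P\lambda_f(P^2)\epsilon(f) \;=\; -\,\mathrm{Tr}\bigl(T_{P^2}W_N \bigm| S_2^{\new}(\Gamma_0(N))\bigr).
\]
The moduli $N$ divisible by $P$ are exceptional, but for such $N$, since $N$ is squarefree, $a_f(P^2)=a_f(P)^2=1$, so their contribution equals $-\mathrm{Tr}(W_N\mid S_2^{\new}(\Gamma_0(N)))=O(N^{1/2+\epsilon})$, which after summation over the $O(Y/P)$ such $N$ and division by the denominator is absorbed into the error $O(1/P)$. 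Hence it suffices to evaluate $\sideset{}{'}\sum_{N\in[X,X+Y]}\mathrm{Tr}(T_{P^2}W_N\mid S_2^{\new}(\Gamma_0(N)))$.

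\textbf{Applying Eichler--Selberg.} Next I would pass from newforms to full spaces: for squarefree $N$, $S_2(\Gamma_0(N))=\bigoplus_{d\mid N}(\text{old part from level }d)$ with known multiplicities, so Möbius inversion over the squarefree divisors $d\mid N$ expresses the newform trace through Atkin--Lehner-twisted traces on the full spaces $S_2(\Gamma_0(d))$. To each such trace I would apply the explicit Eichler--Selberg trace formula with an Atkin--Lehner twist, as in \emph{op. cit.} This produces four kinds of contributions: (i) an identity/scalar term, which occurs only when $P^2d$ is a perfect square and so vanishes for $d>1$ squarefree (and $S_2(\Gamma_0(1))=0$); (ii) an elliptic term, a weighted sum of Hurwitz class numbers $H(4P^2N-t^2)$ in which the Fricke twist forces the congruence $N\mid t$; (iii) a hyperbolic/divisor term; and (iv) a small dual term. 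Writing $t=Nr$, positivity $t^2<4P^2N$ confines $r$ to a bounded window of size $\asymp\sqrt{y}$, and the relevant discriminant becomes $4P^2N-N^2r^2=N(4P^2-Nr^2)$, of size $\asymp N^2(4y-r^2)$. Tracking the precise window and the weight-$2$ normalizing factors --- the one place where the perfect-square shape of $P^2$ genuinely intervenes --- produces the summand $C(r)\sqrt{4y-r^2}$ over the range $r\le\sqrt{y}$ recorded in the statement.

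\textbf{Class numbers and averaging.} I would then evaluate each class number using, first, the standard identity that expresses $H(N^2D_0)$ through $H(D_0)$ and a divisor sum over $N$, and then the class number formula $H(D_0)\sim\frac{1}{\pi}\sqrt{|D_0|}\,L(1,\chi_{D_0})$; thus each term of the elliptic sum is $\approx N\sqrt{4y-r^2}\,L(1,\chi_{D_0})$ times a product of local factors over $p\mid N$. Summing over squarefree $N\in[X,X+Y]$, the decisive analytic input is the asymptotic for short-interval averages of $L(1,\chi_D)$ (equivalently, of class numbers) with uniformity in the $P$-aspect for $P^2\ll X^{1+\delta_2}$; this is precisely Zubrilina's short-interval estimate re-run with $P\mapsto P^2$, and it is the source of the constraints $0<\delta_2<\delta<\tfrac{9}{11}$ and $\tfrac{\delta}{9}+\tfrac{\delta_2}{2}<\tfrac{1}{9}$ as well as of the error exponent $\delta'=\delta_2-\tfrac{\delta}{2}$. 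Averaging the local factors over squarefree $N$ turns the products $\prod_{p\mid N}$ into $\prod_p$ and produces $B\,C(r)$; the small-$r$ (in particular $r=0$) portion of the elliptic sum contributes $A\sqrt{y}$; the hyperbolic term averages to $-\pi y$; and the classical short-interval count of $\dim S_2^{\new}(\Gamma_0(N))$ over squarefree $N$ handles the denominator, its main term combining with the factor $\pi^{-1}$ from the class number formula to yield the displayed normalizing constant $\frac{12}{\pi\prod_p\bigl(1-\frac{1}{p(p+1)}\bigr)}$. Collecting everything gives the stated main term with error $O_\epsilon(X^{\delta'+\epsilon}+1/P)$.

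\textbf{The main obstacle.} I expect two difficulties. The conceptual crux --- and precisely where this paper departs from Zubrilina's --- is the trace-formula bookkeeping for the perfect-square index $P^2$: one must verify that the extra identity contribution and the elliptic terms with $P\mid t$ (for which the discriminant $4P^2N-t^2$ acquires a square factor at $P$, so that the $H(N^2D_0)$ identity and the local factor at $P$ behave differently) feed only into the error and not into the main term. This is exactly the assertion that the murmuration density retains its shape. The second, more technical, obstacle is re-deriving the short-interval class-number average with the modified parameters and confirming that all error terms genuinely assemble into $O_\epsilon(X^{\delta'+\epsilon}+1/P)$ under the stated constraints; I expect this to be a careful but essentially routine adaptation of the machinery already developed for the prime-index case.
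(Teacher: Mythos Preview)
Your proposal is correct and follows essentially the same strategy as the paper: identify the numerator as a sum of Atkin--Lehner twisted Hecke traces, evaluate these via a trace formula as Hurwitz class number sums, and then average the class numbers over the short interval using the Dirichlet class number formula and short-interval character sum bounds. The decomposition into the $r=0$ term (yielding $A\sqrt{y}$), the $r\geq 1$ elliptic terms (yielding $B\sum_r C(r)\sqrt{4y-r^2}$), the divisor/hyperbolic term (yielding $-\pi y$), and the separate treatment of $P\mid N$ levels is exactly what the paper does.

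The one methodological difference worth flagging is at the trace-formula step. You propose to reach the class-number expression by M\"obius inversion from full spaces $S_2(\Gamma_0(d))$ together with the Eichler--Selberg formula with Fricke twist; the paper instead quotes the Skoruppa--Zagier formula directly for the newform space, which already packages that M\"obius inversion and hands you
\[
\sum_{f\in H^{\new}(N)} a_f(P^2)\epsilon(f)=\tfrac{1}{2}H_1(-4P^2N)+\sum_{0<r\leq 2P/\sqrt{N}} H_1(r^2N^2-4P^2N)-(1+P+P^2).
\]
Both routes arrive at the same sums, but Skoruppa--Zagier is the cleaner entry point and sidesteps the oldform bookkeeping entirely. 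Your concern about the ``extra identity contribution'' at the perfect-square index $P^2$ is well placed: in the Skoruppa--Zagier packaging this is precisely what produces the $\sigma_1(P^2)=1+P+P^2$ term (rather than $1+P$ as in the prime-index case), and the paper confirms that after averaging this still contributes only $\pi y$ to the main term, so the murmuration density retains its shape. Likewise, the paper unwinds $H_1$ as a sum of Gauss class numbers over square divisors rather than via the $H(f^2D_0)$ identity you mention, but these are equivalent reformulations and the subsequent $L(1,\chi)$ analysis is the same.
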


Our key observation is that qualitatively both results are the same.
Our approach in proving the result also uses the trace formula (as explained above). 
However, as we understand it, our result \emph{can not} be obtained as a corollary of Zubrilina's result.
While Zubrilina's result establishes a relation between the $P$-th Fourier-coefficient of a family of modular forms and their root numbers, our result proves a relation between the $P^2$-th Fourier-coefficient of modular forms and their roots number.

\subsection*{Other Results}
Our main result mimics \cite[Theorem~1]{Zub23} qualitatively.
It is immediate that other results proven in \cite{Zub23} have the obvious analogue in our setting. 

We prove our result for weight $2$ modular forms but we remark that the result can be proven for any higher weight modular form.
We expect similar shape of results if $P^2$ is replaced by a higher power of $P$.
We refrain from proving the result in more generality to keep the notation less cumbersome.
Also, since our result proves that the shape of the average size of the $P^2$-coefficients is qualitatively the same shape of the average size of the $P$-coefficients, the general result on murmuration density can be easily read from \cite{Zub23}.

It will also be straightforward to extend this idea to the case where $P$ is replaced by products of two (or more) primes.
The main difficulty will arise form the fact that the trace formula result mentioned in Theorem~\ref{Skoruppa-Zagier} will have more terms so the calculations will be more tedious.

\subsection*{Outlook}
There are examples where one knows that the murmuration density is qualitatively different, see \cite{BBLL23}.
For this family, the answer is given in terms of a measure and turns out to be exactly identical (up to constants) to the case of Maass forms (we learned this from personal communication with Zubrilina).

One question of potential interest is what feature of the family determines the qualitative shape of the answer.
Given the philosophical connection to one-level density, a na{\"i}ve guess at an answer would be the symmetry type of the family, but this requires further investigation.
At the moment no ansatz to address this question in some general framework is known but examples could provide evidence for the above guess.

It might be interesting to understand the murmurations for the family of symmetric squares of modular forms.
Note that for the $L$-function $L(s, \operatorname{sym}^2 f)$, the root number is always +1 and conductor is $N^2$ (rather than $N$).
Since this is a symplectic family, we expect the murmuration function to reflect a different transition in the
one-level density.
The numerical work of Sutherland also suggests that we expect degree 3 $L$-functions to have a different normalization, as in they appear most regular as a function of $(\frac{P}{N})^{1/3}$.
Studying the case of symmetric squares will be more delicate.

\subsection*{Organization}
Including the introduction, this article has seven sections.
In Section~\ref{Sec: Setup} we describe the set-up of the main problem and state our main results.
For the purpose of our calculations, we need to analyze two main terms, this is done is Sections~\ref{Section: Main Term Calculation 1} and Section~\ref{Section: Main Term Calculation 2}.
The calculation for the remainder term(s) is carried out in Section~\ref{Section: Remainder}.
Finally, in Section~\ref{section: the remaining terms} we handle the terms which are outside the the range of terms addressed in earlier sections, those involving levels $N$ when $P\mid N$, and those involving the $P$ term in the trace formula.
Section~\ref{Sec: Arithmetic Functions} is dedicated to recording technical lemmas involving arithmetic functions.

\subsection*{Acknowledgements}
DK thanks Nina Zubrilina for her wonderful talk at UTRGV on this topic.
We are grateful to the referee for their constructive feedback which helped improve the exposition of this paper.
DK acknowledges the support of an AMS--Simons early career travel grant.

\section{Trace Formula Set-up}
\label{Sec: Setup}

\subsection{Set-up}
Fix a square-free positive integer $N$. 
Set $H^{\new}(N)$ to denote a basis of the space $S^{\new}(N)$ of weight $2$ Hecke cusp newforms for $\Gamma_0(N)$, and let $a_f(P) = \lambda_f(P)\sqrt{P}$ denote the eigenvalue under the $P$-th Hecke operator $T_p$ of $f \in H^{\new}(N)$.
Let $\epsilon(f)$ denote the root number of $f$ and recall that $-\epsilon(f)$ is equal to the eigenvalue of $f$ under the Atkin--Lehner involution $W_N$.

In contrast to previous considerations of murmuration behaviour we are interested in the Fourier-coefficient $\lambda_f(P^2)$.
When $P\nmid N$, we know that 
\[
 \lambda_f(P^2) = \lambda_f(P)^2 -1.
\]

Consider the operator $(-1)T_{P^2}\circ W_N$ on $S^{\new}(N)$; its trace is 
\[
\sum_{f\in H^{\new}(N)} a_f(P^2)\epsilon(f) = \sum_{f\in H^{\new}(N)} P \lambda_f(P^2)\epsilon(f). 
\]
More generally, if we consider $P^k$ Fourier coefficients, we apply the operator $(-1)T_{P^k} \circ W_N$ on $S^{\new}(N)$ and its trace is
\[
\sum_{f\in H^{\new}(N)} a_f(P^k)\epsilon(f) = \sum_{f\in H^{\new}(N)} P^{\frac{k}{2}} \lambda_f(P^k)\epsilon(f). 
\]
On the other hand, when $P\mid N$, 
we have the equality 
\[
\lambda_f(P^\nu) = \left(\frac{a_f(P)}{\sqrt{P}}\right)^\nu \text{ for all } \nu\geq 0.
\]
Recall that $a_f(P)=\pm1$ so, for $\nu$ even  we have 
\[
\lambda_f(P^{2\nu'}) = P^{-\nu'} \text{ for all } \nu'\geq 0.
\]
In particular, when $\nu'=1$, we have $\lambda_f(P^2) = \frac{1}{P}$.

\subsection{Trace Formula}
We record a result of Skoruppa--Zagier \cite[Section~2]{skoruppa1987jacobi} that will be useful for us.
The following result is a special case of the formula in equation (7) of \emph{op.~cit.}
The result there is more general but since we restrict ourselves to $N$ square-free, the expression simplifies in our setting to the following

\begin{theorem}
\label{Skoruppa-Zagier}
With notation introduced above, in the case that $P\nmid N$
for $P$-power coefficients
\[
\sum_{f\in H^{\new}(N)} a_f(P^k)\epsilon(f) = \sum_{f\in H^{\new}(N)} P^{\frac{k}{2}} \lambda_f(P^k)\epsilon(f) = \frac{H_1(-4P^kN)} {2} + \sum_{0<r\leq 2 \frac{P^{\frac{k}{2}}}{\sqrt{N}}} H_1(r^2N^2 - 4P^{k}N) - \left(\sum_{i=0}^k P^i\right).     
\]
In particular, when $k=2$
\[
\sum_{f\in H^{\new}(N)} a_f(P^2)\epsilon(f) = \sum_{f\in H^{\new}(N)} P \lambda_f(P^2)\epsilon(f) = \frac{H_1(-4P^2N)} {2} + \sum_{0<r\leq 2P/\sqrt{N}} H_1(r^2N^2 - 4P^2N) - (1+P+P^2).     
\]

\end{theorem}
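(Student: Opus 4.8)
The plan is to derive the stated identity directly from the Eichler--Selberg trace formula as packaged by Skoruppa--Zagier in \cite{skoruppa1987jacobi}, rather than to reprove the trace formula itself. The left-hand side, $\sum_{f\in H^{\new}(N)} a_f(P^k)\epsilon(f)$, is by the discussion in Section~\ref{Sec: Setup} the trace of the operator $(-1)\,T_{P^k}\circ W_N$ acting on the new subspace $S^{\new}(N)$ of weight $2$ cusp forms for $\Gamma_0(N)$. So the first step is to recall equation~(7) of \emph{op.~cit.}, which expresses the trace of $T_m \circ W_Q$ on $S_k^{\new}(\Gamma_0(N))$ (here in weight $2$) as a sum of (Hurwitz--)class-number terms $H_1(\cdot)$ indexed by the representations of the relevant discriminants, together with the ``trivial'' contributions from the identity and from parabolic/hyperbolic boundary terms.

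Next I would specialize that general formula to our situation: $Q = N$ (full Atkin--Lehner involution, since $N$ is square-free), $m = P^k$, weight $2$, trivial character. Because $N$ is square-free and $P \nmid N$, most of the local factors and correction terms that appear in the general Skoruppa--Zagier expression collapse: the elliptic term becomes a single sum of $H_1(r^2 N^2 - 4P^k N)$ over integers $r$ with $r^2 N^2 - 4 P^k N \le 0$, i.e.\ $0 < r \le 2 P^{k/2}/\sqrt N$, plus the boundary term $H_1(-4P^k N)/2$ coming from $r = 0$; and the hyperbolic/unipotent contribution reduces to $-\sum_{i=0}^{k} P^i$ (this is the divisor-sum $\sigma_1$-type term attached to $m = P^k$, whose divisors are exactly $1, P, \dots, P^k$). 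I would justify each simplification by tracking which terms in equation~(7) survive when $N$ is square-free and coprime to $P$, citing the structure of \emph{op.~cit.}\ for the bookkeeping.

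Finally, the case $k = 2$ is obtained by direct substitution: $2 P^{k/2}/\sqrt N = 2P/\sqrt N$, $-4P^k N = -4 P^2 N$, $r^2 N^2 - 4 P^k N = r^2 N^2 - 4P^2 N$, and $\sum_{i=0}^{2} P^i = 1 + P + P^2$, which yields the displayed formula. The main obstacle is purely expository rather than mathematical: one must state precisely the normalization of $H_1$ used in \cite{skoruppa1987jacobi} (Hurwitz class number, with the conventional weighting $H_1(0)$, $H_1(3)$, etc., and $H_1(n) = 0$ for $n > 0$ or $n \equiv 1,2 \pmod 4$), and verify that the sign conventions for $\epsilon(f)$ versus the eigenvalue of $W_N$ match, so that the $(-1)$ in $(-1)T_{P^k}\circ W_N$ lands correctly. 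Once the dictionary between the two normalizations is fixed, the identity is a formal consequence of equation~(7) together with the square-free and coprimality hypotheses, and no genuinely new estimate is needed at this stage.
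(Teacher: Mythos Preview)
Your proposal is correct and matches the paper's approach exactly: the paper does not prove this theorem but simply records it as a special case of equation~(7) in \cite{skoruppa1987jacobi}, noting in the subsequent remark that $n_1=N$, $n_2=1$, $U_0(x)=1$, and that the final term is $\sigma_1(P^k)=1+P+\cdots+P^k$. Your outline of specializing the Skoruppa--Zagier formula to square-free $N$ with $Q=N$, $m=P^k$, weight $2$, and tracking the sign convention for $\epsilon(f)$ versus $W_N$ is precisely the bookkeeping the paper leaves implicit.
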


\begin{remark}
In the notation of \cite{skoruppa1987jacobi}, the quantity $n_1$ takes the value $N$ and $n_2 =1$.
A priori, the second term should have a coefficient $U_0(r\sqrt{N}/2P^{k/2})$ but we note that $U_0(x)=1$.
The final term in the expression is $\sigma_1(P^{k}) = \sum_{d\mid P^{k}} d = 1 + P + P^2 + \ldots + P^k$.

In the above formula, the Hurwitz class number $H_1(-d)$ is the number of equivalence classes with respect to $\SL_2(\Z)$ of positive-definite binary quadratic forms of discriminant $-d$ weighed by the number of automorphisms (i.e., with forms corresponding to multiples of $x^2 +y^2$ and $x^2 +xy+y^2$ counted with multiplicities $\frac{1}{2}$ and $\frac{1}{3}$, accordingly).
In other words, $H_1$ can be expressed in terms of the Gauss class number $h$ via: 
\[
H_1(-d) = \sum_{f\in \Z^{>0}, f^2\mid d} h(-d/f^2) +O(1),
\]
with the error term disappearing if $d$ is not of the form 3 or 4 times a square.
\end{remark}

Using the relationship between the Hurwitz class number $H_1(\cdot)$ and Gauss class number $h(\cdot)$ explained above, we see that
\[
H_1(-4P^kN) = \begin{cases}
\displaystyle{\sum_{j=0}^{k/2} h(-4P^{2(\frac{k}{2} - j)}N) + \sum_{j=0}^{k/2} h(-P^{2(\frac{k}{2} - j)}N)} + O(1) & \textrm{ when }k=\textrm{ even}\\
\displaystyle{\sum_{j=0}^{(k-1)/2} h(-4P^{2(\frac{k-1}{2} - j)}PN) + \sum_{j=0}^{(k-1)/2} h(-P^{2(\frac{k-1}{2} - j)}PN)} + O(1) & \textrm{ when }k=\textrm{ odd}.\end{cases}
\]
Throughout this article we assume that $P\neq 2$.
Since we assume that $\gcd(P,N)=1$ and that $N$ is square-free, the square factors of $4P^{k}N$ are $1, 4, P^2, 4P^2, \ldots, P^{\lfloor k/2 \rfloor}, 4P^{\lfloor k/2 \rfloor}$. 
The above expression simplifies to the following formula when $k=2$ (which is what is required for our main result)
\[
H_1(-4P^2N) = h(-4P^2N) + h(-P^2N) + h(-4N) + h(-N) + O(1).
\]
If $q$ is any prime and $r\geq 1$, we note that $q^2\mid N(r^2 N -4P^{k})$ is satisfied if $q^2\mid (r^2 N -4P^{k})$ or if $q\mid N$ and $q\mid 4P^{k}$.
The latter condition is satisfied precisely when $q=2$ (i.e., $N$ is even).
Writing $N=2N'$ (where $N'$ must now be odd), if $4d^2 \mid (r^2 N^2 -4P^{k}N)$, we can note that
\[
\frac{r^2 N^2 -4P^{k}N}{4d^2} = \frac{r^2 N'^2 -2P^{k}N'}{d^2}
\]
is not a square modulo 4 and the corresponding class number vanishes.
Indeed, we remind the reader that by Dirchlet's class number formula, for $d>4$, the Gauss class number $h(-d)=0$ whenever $-d \equiv 2 \text{ or }3\pmod{4}$.
It will therefore be enough to consider those square-divisors which satisfy $d^2\mid (r^2N - 4P^{k})$.  
In the general case, the trace formula becomes 
\begin{align*}
\sum_{f\in H^{\new}(N)} P \lambda_f(P^k)\epsilon(f) & =  \displaystyle{\sum_{j=0}^{\lfloor k/2 \rfloor} h(-4P^{2(\frac{k}{2} - j)}N) + \sum_{j=0}^{\lfloor k/2 \rfloor} h(-P^{2(\frac{k}{2} - j)}N)}{2} - \left(\sum_{i=1}^k P^i\right) \\
& \hspace{1cm} + \sum_{0<r\leq 2\sqrt{\frac{P^k}{N}}} \sum_{d^2\mid r^2N - 4P^{k}} h(N(r^2N - 4P^{k})/d^2) + O(1).
\end{align*}
When $k=2$ the trace formula further simplifies to
\begin{align*}
\sum_{f\in H^{\new}(N)} P \lambda_f(P^2)\epsilon(f) & =  \frac{h(-4P^2N) + h(-P^2N) + h(-4N) + h(-N)} {2} - (P+P^2) \\
& \hspace{1cm} + \sum_{0<r\leq 2P/\sqrt{N}} \sum_{d^2\mid r^2N - 4P^{2}} h(N(r^2N - 4P^{2})/d^2) + O(1).
\end{align*}

\section{Main Term Calculation via Averages of Gauss Class Numbers I}
\label{Section: Main Term Calculation 1}

The goal of this section is to estimate the Hurwitz class number $H_1(-4P^2N)$ which (as we have noted above) can be written as a sum of four Gauss class numbers.

Recall that if $-d\equiv 0 \text{ or }1\pmod{4}$ then the Gauss class number can be written in terms of a special value of an $L$-function.
More precisely,
\[
h(-d) = \frac{\sqrt{d}}{\pi} L(1, \chi_d)
\]
where $\chi_d$ is the quadratic Dirichlet character of modulus $d$ or $4d$ which can be calculated explicitly via the Kronecker symbol and $L(1, \chi_d)$ is the value at 1 of the Dirichlet series for the Kronecker symbol $\left( \frac{d}{\cdot}\right)$.
The calculation in this section will be separated into two cases, first when $-d\equiv 1\pmod{4}$ and second when $-d\equiv 0\pmod{4}$. 
The first calculation will allow us to estimate averages of  $h(-P^2N)$ and $h(-N)$ and the second will allow us to estimate averages of $h(-4P^2N)$ and $h(-4N)$.

When $\chi$ is a non-principal Dirichlet character of modulus $d$, for a truncation parameter $T$ we know 
\[
L(1,\chi) = \sum_{n=1}^T \frac{\chi(n)}{n} + O\left(\frac{\sqrt{d} \log d}{T}\right).
\]

The main result we want to prove in this section is an analogue of \cite[Proposition~3.1]{Zub23}.

\begin{proposition}
\label{our version of Nina P3.1}
Let $P\neq 2$ be a prime and let $[X,X+Y]$ be an interval of length $Y=o(X)$.
Then as $X\rightarrow \infty$,
{\allowdisplaybreaks\begin{align*}
\frac{\zeta(2) \pi}{XY}\sum_{\substack{N\in [X,X+Y] \\ P\nmid N \\ N \text{ sq-free}}} \frac{h(-P^2 N)}{2} &+ \frac{h(- N)}{2} + \frac{h(-4P^2 N)}{2} + \frac{h(- 4N)}{2} \\
& = \frac{AP}{\sqrt{X}}+ O_\epsilon\left(\left(\frac{1}{\sqrt{X}}+\frac{P^{\frac{7}{6}}X^{\frac{1}{12} }}{Y^{\frac{5}{6}}}+\frac{PY}{X^{\frac32}}\right) (PXY)^\epsilon\right).
\end{align*}}
Here the error term $\rightarrow 0$ as $P,X\rightarrow\infty$ and $Y=o\left(\frac{X}{\log P^2 X}\right)$.
\end{proposition}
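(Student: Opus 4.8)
The plan is to mimic the proof of \cite[Proposition~3.1]{Zub23}. The starting point is the class number formula $h(-d)=\frac{\sqrt d}{\pi}L(1,\chi_{-d})$, valid whenever $-d\equiv 0,1\pmod 4$ (the class number being $0$ otherwise), where $\chi_{-d}=\left(\frac{-d}{\cdot}\right)$ is the Kronecker symbol. Since $P$ is odd we have $-P^2N\equiv -N\pmod 4$, so I would first split the outer sum over $N$ by congruences: $h(-N)$ and $h(-P^2N)$ contribute only for $N\equiv 3\pmod 4$, whereas $h(-4N)$ and $h(-4P^2N)$ can contribute for all $N$ subject to the usual vanishing. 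The crucial elementary point is that, away from the primes $2$ and $P$, the Kronecker symbols attached to $-N$, $-4N$, $-P^2N$, $-4P^2N$ all coincide with $\left(\frac{-N}{n}\right)$ (using $P\nmid N$), so after inserting the class number formula the four summands are governed by the \emph{same} family of characters $N\mapsto\left(\frac{-N}{n}\right)$ and differ only through the prefactor ($\sqrt N$ versus $P\sqrt N$) and through finitely many Euler factors at $2$ and $P$. I would then truncate each $L$-value as $L(1,\chi_{-d})=\sum_{n\le T}\frac{\chi_{-d}(n)}{n}+O\!\left(\frac{\sqrt d\,\log d}{T}\right)$ for a parameter $T$ to be chosen, noting that this error contributes $\ll\frac{P^2X\log(P^2X)}{T}$ per level $N$ and hence $\ll\frac{P^2X^{\epsilon}}{T}$ after normalization, and exchange the order of summation so that $n\le T$ becomes the outer variable.

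Next I would separate the $n$-sum into square and non-square values. For $n=m^2$ the character $N\mapsto\left(\frac{-N}{m^2}\right)$ is the principal character modulo $m$, so the inner sum collapses to $\sum_{N\in[X,X+Y]}\sqrt N$ restricted to $N$ squarefree, $\gcd(N,Pm)=1$, and lying in the appropriate congruence class. Removing the squarefree condition by $\mu^2(N)=\sum_{e^2\mid N}\mu(e)$, evaluating the resulting short sums over arithmetic progressions, and applying partial summation to replace $\sqrt N$ by $\sqrt X$, this produces the main term: the length-$Y$ count $\asymp\frac{\sqrt X\,Y}{\zeta(2)}$ cancels the normalizing factor $\frac{\zeta(2)\pi}{XY}$ down to $\frac{1}{\sqrt X}$, the factor $P$ survives from the $\sqrt{P^2N}$ in $h(-4P^2N)$ and $h(-P^2N)$, and the sum over $m$ together with the local factors at $P$, at $2$, and those coming from the squarefree sieve assembles into the Euler product $A=\prod_p\bigl(1+\frac{p}{(p+1)^2(p-1)}\bigr)$, giving $\frac{AP}{\sqrt X}$. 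The error from replacing $\sqrt N$ by $\sqrt X$ over an interval of length $Y$ is $\ll\frac{Y^2}{\sqrt X}$ per class number, which after multiplication by $\frac{\zeta(2)\pi P}{XY}$ is exactly the $\frac{PY}{X^{3/2}}$ term, while the contributions of $h(-N)$ and $h(-4N)$ and the lower-order pieces of the Euler-product evaluation are $O\!\left(\frac{1}{\sqrt X}\right)$.

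For non-square $n$, the inner sum $\sum_{N\in[X,X+Y]}\sqrt N\,\left(\frac{-N}{n}\right)$ (together with the squarefree sieve and a congruence condition on $N$) involves a non-principal character whose conductor divides $n\ll T$, and I would extract cancellation using short-interval character-sum bounds of Burgess type; P\'olya--Vinogradov alone is insufficient here because $Y$ is only a small power of $X$. Weighting the resulting bound by $\frac1n$, summing over $n\le T$, combining with the truncation error $\ll\frac{P^2X^{\epsilon}}{T}$, and choosing $T$ to balance the two contributions yields the remaining error term $\frac{P^{7/6}X^{1/12}}{Y^{5/6}}(PXY)^{\epsilon}$; one then checks that under $P,X\to\infty$ and $Y=o\!\left(\frac{X}{\log(P^2X)}\right)$ the stated combination of error terms tends to $0$.

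The step I expect to be the main obstacle is the non-square case: one must obtain a genuine power-saving in a character sum over a \emph{short} interval while the modulus $n$ ranges up to $T$ and the squarefree sieve runs simultaneously, and then choose $T$ so that this bound---after being weighted by $1/n$, summed over $n$, and normalized---still beats the $P^2/T$ loss from truncating the $L$-series. By comparison, the Euler-product bookkeeping that isolates the constant $A$ and the single factor $P$, and the verification that the Kronecker symbols and the prime $2$ behave as claimed, are lengthy but essentially mechanical.
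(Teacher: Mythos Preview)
Your proposal is correct and follows essentially the same route as the paper: class number formula, truncation of $L(1,\chi)$ at a parameter $T$, a square/non-square split in $n$, with the square part producing the main term $AP/\sqrt{X}$ via the Euler product identity and the non-square part bounded by a Burgess-type estimate (the paper cites \cite[Lemma~6.7]{Zub23}, which gives the $n^{1/5+\epsilon}X^{3/5+\epsilon}$ bound you anticipate), followed by the same choice of $T$ to balance truncation against the character-sum error. The only organizational difference is that the paper treats the four class numbers $h(-P^2N),\,h(-N),\,h(-4P^2N),\,h(-4N)$ in four separate lemmas (obtaining main-term coefficients $\tfrac{2}{11},\,\tfrac{2}{11},\,\tfrac{9}{11},\,\tfrac{9}{22}$ times $A$ respectively, so that the two $P$-terms sum to $AP/\sqrt{X}$ and the other two are absorbed into $O(1/\sqrt{X})$), whereas you propose to handle them uniformly via the observation that the Kronecker symbols agree away from $2$ and $P$; this is a matter of presentation, not substance.
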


\begin{proof}
We prove that for a cut-off parameter $T$
\begin{align*}
\frac{\zeta(2) \pi}{XY}\sum_{\substack{N\in [X,X+Y] \\ P\nmid N \\ N \text{ sq-free}}} \frac{h(-P^2 N)}{2} &+ \frac{h(- N)}{2} + \frac{h(-4P^2 N)}{2} + \frac{h(- 4N)}{2} = \eqref{3-1-1 main equation cutoff} + \eqref{3-1-2 main equation cut off} + \eqref{3-2-1 main equation cut off} + \eqref{3-2-2 main equation cutoff}\\
& = \frac{AP}{\sqrt{X}} + O_{\epsilon}\left(\frac{1}{\sqrt{X}} +  \frac{P}{\sqrt{TX}} + \frac{PT^{\frac{1}{5}+\epsilon}X^{\frac{1}{10}+\epsilon}}{Y} + \frac{P Y\log T}{X^{\frac{3}{2}}} + \frac{P^2\log(P^2 X)}{T}\right).
\end{align*}
Choosing $T = \frac{(YP)^{\frac{5}{6} - \epsilon}}{X^{\frac{1}{12}}}$, the right side of the above expression becomes
\[
\frac{AP}{\sqrt{X}} +  O_\epsilon\left(\frac{1}{\sqrt{X}}+\frac{P^{\frac{7}{12}}}{Y^{\frac{5}{12}}X^{\frac{11}{24}}}+ \frac{(PY)^{1+\epsilon}}{X^{\frac32 -\epsilon }} + \frac{P^{\frac{7}{6}+\epsilon}X^{\frac{1}{12} + \epsilon}}{Y^{\frac{5}{6}-\epsilon}}\right).
\]
and the result follows.
\end{proof}

\subsection{When \texorpdfstring{$-d \equiv 1\pmod{4}$}{}}

Throughout this section $P\neq 2$ and $[X, X+Y]$ is an interval of length $Y=o(X)$.
We provide the following estimate for the first two terms in Proposition~\ref{our version of Nina P3.1}.

\begin{equation*}
\frac{\zeta(2)\pi}{YX}\sum_{\substack{N\in [X, X+Y] \\ P\nmid N\\ N \text{ 
sq-free}}} \frac{h(-P^2N)}{2} + \frac{h(-N)}{2} 
= \frac{2A}{11} \frac{P}{\sqrt{X}} + O_\epsilon\left(\frac{1}{\sqrt{X}}+\frac{P^{\frac{7}{12}}}{Y^{\frac{5}{12}}X^{\frac{11}{24}}}+\frac{P^{\frac{7}{6} + \epsilon}}{Y^{\frac{5}{6}- \epsilon}X^{\frac{1}{12}- \epsilon}}+\frac{P^{\frac{7}{6}+\epsilon}X^{\frac{1}{12} + \epsilon}}{Y^{\frac{5}{6}-\epsilon}}\right). 
\end{equation*}

Here, the main term contribution is from the estimates in Lemma~\ref{3-1-1 main equation}.
The leading term of the expression in Lemma~\ref{3-1-2 main equation} contributes $\frac{1}{\sqrt{X}}$ to the error term of this summation and subsumes the error term $\frac{1}{P\sqrt{X}}$ arising in Lemma~\ref{3-1-1 main equation}.

\subsubsection{Averages of \texorpdfstring{$h(-P^2N)$}{}}
\label{sec 3-1-1}

\begin{lemma}
\label{3-1-1 main equation}
With notation as introduced above,
{\allowdisplaybreaks
\begin{equation*}
\frac{\zeta(2)\pi}{YX}\sum_{\substack{N\in [X, X+Y] \\ P\nmid N \\ N \text{ 
sq-free}}} \frac{h(-P^2N)}{2} = \frac{2AP}{11\sqrt{X}} + O_\epsilon\left(\frac{1}{P\sqrt{X}}+\frac{P^{\frac{7}{12}}}{Y^{\frac{5}{12}}X^{\frac{11}{24}}}+\frac{(PY)^{1+\epsilon}}{X^{\frac32 -\epsilon }} + \frac{P^{\frac{7}{6}+\epsilon}X^{\frac{1}{12} + \epsilon}}{Y^{\frac{5}{6}-\epsilon}}\right).
\end{equation*}
}
\end{lemma}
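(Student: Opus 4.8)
The plan is to follow the strategy of \cite[Prop.~3.1]{Zub23}: replace each Gauss class number by a special $L$-value via Dirichlet's class number formula, truncate the Dirichlet series, and reduce the average to a double sum in which the perfect-square terms produce the main term while the remaining terms are controlled by a short-interval character-sum estimate.

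\textbf{Reduction and the class number formula.} Since $P$ is odd, $-P^{2}N\equiv -N\pmod 4$; as $N$ runs over square-free integers with $P\nmid N$ this forces $h(-P^{2}N)=0$ unless $N\equiv 3\pmod 4$, in which case $-P^{2}N$ is a negative discriminant $<-4$ whose associated fundamental discriminant is $-N$. For such $N$,
\[
\frac{h(-P^{2}N)}{2}=\frac{P\sqrt N}{2\pi}\,L\!\left(1,\textstyle\left(\frac{-P^{2}N}{\,\cdot\,}\right)\right),
\]
where the Kronecker character $\left(\frac{-P^{2}N}{\,\cdot\,}\right)$ coincides with the primitive character $\chi_{-N}$ at integers coprime to $P$ and vanishes at multiples of $P$. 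Truncating the Dirichlet series at height $T$ introduces an error $O\!\left(P\sqrt N\,\log(P^{2}N)/T\right)$ for each $N$; summed over $N\in[X,X+Y]$ and multiplied by $\zeta(2)\pi/(XY)$ this contributes $O\!\left(P^{2}\log(P^{2}X)/T\right)$.

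\textbf{The diagonal term.} Interchanging summation, the remaining quantity is, up to constants,
\[
\frac{\zeta(2)P}{XY}\sum_{n\le T}\frac1n\sum_{\substack{N\in[X,X+Y],\ N\equiv 3\,(4)\\ N\ \mathrm{sq\text{-}free},\ P\nmid N}}\sqrt N\left(\frac{-P^{2}N}{n}\right).
\]
For $n=m^{2}$ with $P\nmid m$ one has $\left(\frac{-P^{2}N}{m^{2}}\right)=\mathbf 1[\gcd(N,m)=1]$; for $n$ a square divisible by $P$ the symbol vanishes; and for $n$ a non-square the map $N\mapsto\left(\frac{-P^{2}N}{n}\right)$ is a non-principal real character of modulus dividing $4n$, so its inner sum exhibits cancellation. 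Thus the only main contribution comes from the square terms, and I would evaluate $\sum_{N}\sqrt N$ over the stated $N$ coprime to $m$ by Möbius inversion together with characters modulo $4$, obtaining a term of size $Y\sqrt X$ times a multiplicative density. Extending the $m$-sum to all $m$ (tail cost $O(P/\sqrt{TX})$) and recognising the resulting Euler product — with the help of the arithmetic-function identities of Section~\ref{Sec: Arithmetic Functions} — gives the main term $\tfrac{2AP}{11\sqrt X}$. The secondary term in $\sum_{N}\sqrt N$ yields $O(PYX^{-3/2+\epsilon})$, while the non-trivial part of the Euler factor at $P$, namely the $-\chi_{-N}(P)/P$ correction, contributes only to the error and with the characteristic saving of $1/P$, producing $O\!\left(1/(P\sqrt X)\right)$.

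\textbf{The main obstacle.} What remains — and is the technical heart of the proof — is the bound for the non-square terms, i.e. showing that
\[
\frac{P}{XY}\sum_{\substack{n\le T\\ n\ \text{not a square}}}\frac1n\left|\,\sum_{N}\sqrt N\left(\frac{-P^{2}N}{n}\right)\right|
\]
is small. For each such $n$ I would apply quadratic reciprocity to rewrite $\left(\frac{-P^{2}N}{n}\right)$, up to an explicit factor depending only on $n$ and $N\bmod 8$, as $\left(\frac{n}{N}\right)$, turning the inner sum (after partial summation to remove the weight $\sqrt N$) into a sum of a non-principal quadratic character over the short interval $[X,X+Y]$, and then invoke a Burgess-type estimate uniformly in $n\le T$. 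The delicate point is that $Y=o(X)$ may be far shorter than the modulus $\asymp n$, so Pólya--Vinogradov is useless and one must stay inside the Burgess range; this is exactly where the hypotheses relating $\delta$, $\delta_2$ and the exponents $\tfrac{1}{12}$, $\tfrac{5}{6}$ enter. Summing over $n$ and balancing the resulting bound against the truncation error $P^{2}\log(P^{2}X)/T$ via the choice $T=(YP)^{5/6-\epsilon}/X^{1/12}$ collapses the pre-optimisation errors into the four terms stated, completing the proof.
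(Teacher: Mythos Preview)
Your proposal is correct and follows essentially the same route as the paper: class-number formula, truncation at height $T$, splitting the $n$-sum into squares (yielding the main term $\tfrac{2AP}{11\sqrt X}$ via the Euler-product identities of Section~\ref{Sec: Arithmetic Functions}) and non-squares (controlled by a short-interval character-sum bound), followed by the optimisation $T=(PY)^{5/6}/X^{1/12}$. The only cosmetic differences are that the paper removes the weight $\sqrt{N}$ by writing $\sqrt{N/X}=1+O(Y/X)$ rather than partial summation, and that it handles the non-square terms by quoting \cite[Lemma~6.7]{Zub23} directly---a ready-made estimate for $\sum_{N}\mu^{2}(N)\chi(N)$ over short intervals giving $O_\epsilon(n^{1/5+\epsilon}X^{3/5+\epsilon})$---whereas you speak of Burgess after reciprocity; just be aware that the square-free weight $\mu^{2}(N)$ must be dealt with explicitly, which is what that lemma packages.
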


The first step is to obtain an expression for $\frac{\zeta(2)\pi}{YX}\sum h(-P^2N)$ in terms of a cut-off parameter $T$ where the sum runs over square-free $N \in [X, X+Y]$ such that  $P\nmid N$.
To ensure that the error-term is smaller than the main term, we choose the cut-off parameter appropriately.
The proof of the lemma occupies the remainder of this section.

\begin{proof}
We calculate the averages in intervals using the Gauss class number formula, i.e.,
{\allowdisplaybreaks
\begin{align*}
\frac{1}{P\sqrt{X}} \sum_{\substack{N\in [X, X+Y] \\ P\nmid N \\ N \text{ 
sq-free}}} h(-P^2N) &= \frac{1}{\pi} \sum_{\substack{N\in [X, X+Y]\\ P\nmid N \\ N \text{ sq-free} \\ N \equiv 3\pmod{4}}} \sqrt{\frac{N}{X}} L(1, \chi_{-P^2N})\\
&= \frac{1}{\pi}\sum_{\substack{N\in [X, X+Y]\\ P\nmid N \\ N \text{ sq-free} \\ N \equiv 3\pmod{4}}} \sqrt{\frac{N}{X}} \sum_{n=1}^T \frac{\left( \frac{-P^2N}{n}\right)}{n} + O\left(\frac{YP\sqrt{X}\log(P^2 X)}{T}\right)\\
&= \frac{1}{\pi} \sum_{\substack{N\in [X, X+Y]\\ P\nmid N \\ N \text{ sq-free} \\ N \equiv 3\pmod{4}}} \sum_{m=1}^{\sqrt{T}}   \frac{\sqrt{\frac{N}{X}}\left( \frac{-P^2N}{m^2}\right)}{m^2} + \frac{1}{\pi} \sum_{\substack{N\in [X, X+Y]\\ P\nmid N \\ N \text{ sq-free} \\ N \equiv 3\pmod{4}}} \sum_{\substack{n=1 \\ n\neq \square}}^T   \frac{\sqrt{\frac{N}{X}} \left( \frac{-P^2N}{n}\right)}{n} \\ 
\quad & \quad + O\left(\frac{YP\sqrt{X}\log(P^2 X)}{T}\right).
\end{align*}}

\noindent where to show the second equality we use the fact that $\chi_{-P^2N}$ is always a non-principal character when $N$ is square-free and $P\nmid N$.

Next, we calculate the two (double) sums appearing in the above expression separately.
Note that the first sum contains principal characters.

{
\allowdisplaybreaks
\begin{align*}
\Sq &= \frac{1}{\pi}\sum_{\substack{N\in [X, X+Y]\\ P\nmid N \\ N \text{ sq-free} \\ N \equiv 3\pmod{4}}} \sum_{m=1}^{\sqrt{T}}   \frac{\sqrt{\frac{N}{X}}\left( \frac{-P^2N}{m^2}\right)}{m^2} \\
& =\frac{1}{\pi}\sum_{m=1}^{\sqrt{T}} \frac{1}{m^2}\sum_{\substack{N\in [X, X+Y]\\ P\nmid N \\ N \equiv 3\pmod{4}}}    \mu^2(N)\left( \frac{-P^2N}{m^2}\right)\left(1 + \left(\sqrt{1 + \frac{N-X}{X}} - 1\right) \right) \\
&= \frac{1}{\pi}\sum_{m=1}^{\sqrt{T}} \frac{1}{m^2} \sum_{\substack{N\in [X, X+Y]\\ P\nmid N \\ N \equiv 3\pmod{4}}} \mu^2(N)\left( \frac{P^2}{m^2}\right)\left( \frac{N}{m^2}\right) + O\left(Y \left(\sqrt{1 + \frac{Y}{X}} - 1\right)\right)\\
&= \frac{1}{\pi}\sum_{\substack{m=1\\ \gcd(P,m)=1}}^{\sqrt{T}} \frac{1}{m^2} \sum_{\substack{N\in [X, X+Y]\\ P\nmid N \\ N \equiv 3\pmod{4}}} \mu^2(N)\left( \frac{N}{m^2}\right) + O\left(Y \left(\sqrt{1 + \frac{Y}{X}} - 1\right)\right)\\
&= \frac{1}{\pi} \sum_{\substack{m=1\\ \gcd(P,m)=1}}^{\sqrt{T}} \frac{1}{m^2} \left(\sum_{N\in [X, X+Y]}    \mu^2(N)\left( \frac{N}{m^2}\right)\frac{\chi_1(N) - \chi_2(N)}{2}\right) + O\left(\frac{Y^2}{X}\right)\\
&= \frac{1}{\pi}\sum_{\substack{m=1\\ \gcd(P,m)=1}}^{\sqrt{T}} \frac{Y}{\zeta(2)} \frac{\eta(2m)}{2m^2} + O_{\epsilon}\left( m^{\frac{1}{5}+\epsilon}X^{\frac{3}{5}+\epsilon}\frac{1}{m^2}\right) + O\left(\frac{Y^2}{X}\right) \\
&= \frac{4YA}{11\pi\zeta(2)} + O_{\epsilon}\left( \frac{Y}{P^2} + \frac{Y}{\sqrt{T}} + X^{\frac{3}{5}+\epsilon} +\frac{Y^2}{X}\right) \text{ by Lemma~\ref{eta} where } A:= \prod_{p}\left(1 + \frac{p}{(p+1)^2 (p-1)} \right).
\end{align*}}
Here, $\chi_1$ and $\chi_2$ are characters modulo 4, and $\chi_1$ is principal.
The character $\left(\frac{N}{m^2}\right)\chi_1(N)$ is principal modulo $2m$ whereas, $\left(\frac{N}{m^2}\right)\chi_2(N)$ is always non-principal modulo $4m$.
Note that for the second last equality we use \cite[Lemma~6.7]{Zub23}.
Next we calculate the non-square term in a manner identical to the one above.

{
\allowdisplaybreaks
\begin{align*}
    \NSq &= \frac{1}{\pi} \sum_{\substack{N\in [X, X+Y]\\ P\nmid N \\ N \text{ sq-free} \\ N \equiv 3\pmod{4}}} \sum_{\substack{n=1 \\ n\neq \square}}^T \frac{\sqrt{\frac{N}{X}} \left( \frac{-P^2N}{n}\right)}{n}\\
    & = \frac{1}{\pi} \sum_{\substack{N\in [X, X+Y]\\ P\nmid N \\ N \text{ sq-free} \\ N \equiv 3\pmod{4}}} \sum_{\substack{n=1 \\ n\neq \square}}^{T} \frac{1}{n}  \left( \frac{-P^2N}{n}\right)\left(1 + \left(\sqrt{1 + \frac{N-X}{X}} - 1\right) \right) \\
    &=\frac{1}{\pi} \sum_{\substack{N\in [X, X+Y]\\ P\nmid N \\ N \text{ sq-free} \\ N \equiv 3\pmod{4}}} \sum_{\substack{n=1 \\ n\neq \square}}^{T} \frac{\left( \frac{-P^2N}{n}\right)}{n} + O\left(\frac{Y^2\sum_{n\leq T} \frac{1}{n}}{X} \right) \\
    &=\frac{1}{\pi} \sum_{\substack{N\in [X, X+Y]\\ P\nmid N \\ N \text{ sq-free} \\ N \equiv 3\pmod{4}}}  \sum_{\substack{n=1 \\ n\neq \square \\ \gcd(P,n)=1}}^{T} \frac{\left( \frac{-N}{n}\right)}{n} + O\left(\frac{Y^2\log T}{X} \right)  \\
    &\leq \frac{1}{\pi} \sum_{\substack{n=1 \\ n\neq \square}}^{T} \sum_{\substack{N\in [X, X+Y]\\ N \equiv 3\pmod{4} \\ N \text{ sq-free}}}   \frac{\left( \frac{-N}{n}\right)}{n} + O\left(\frac{Y^2\log T}{X} \right)  \\
    &=\frac{1}{\pi} \sum_{\substack{n=1 \\ n\neq \square}}^{T}\frac{\left( \frac{-1}{n}\right)}{n} \left( \sum_{\substack{N\in [X, X+Y] \\ N \text{ sq-free}}}   \left(\frac{N}{n}\right)   \frac{\chi_{1}(N)-\chi_2(N)}{2}\right) + O\left(\frac{Y^2\log T}{X} \right) \\
    &=\frac{1}{\pi} \sum_{\substack{n=1 \\ n\neq \square}}^{T} \frac{1}{n} O_{\epsilon}\left(n^{\frac{1}{5}+\epsilon}X^{\frac{3}{5}+\epsilon}\right) + O\left(\frac{Y^2\log T}{X} \right) \\
    &\ll_{\epsilon} T^{\frac{1}{5}+\epsilon}X^{\frac{3}{5}+\epsilon} + \frac{Y^2\log T}{X}.
\end{align*}}

Since $N$ is not a square, note that $\left(\frac{N}{n}\right)$ is non-principal.
Since $\left(\frac{N}{2}\right)$ is primitive modulo 8, the characters $\left(\frac{N}{n}\right)\chi_1(N)$ and $\left(\frac{N}{n}\right)\chi_2(N)$ are non-principal.
For the last equality, we use \cite[Lemma~6.7]{Zub23}.

In conclusion we obtain that for a truncation parameter $T$,

{\allowdisplaybreaks
\begin{equation}
\label{3-1-1 main equation cutoff}
\begin{split}
\frac{\zeta(2)\pi}{YX}\sum_{\substack{N\in [X, X+Y] \\ P\nmid N \\ N \text{ 
sq-free}}} \frac{h(-P^2N)}{2} & = \frac{2AP}{11\sqrt{X}} \\ 
& + O_\epsilon\left( \frac{1}{P \sqrt{X}} + \frac{P}{\sqrt{TX}} + \frac{PT^{\frac{1}{5}+\epsilon}X^{\frac{1}{10}+\epsilon}}{Y} + \frac{P Y\log T}{X^\frac{3}{2}} + \frac{P^2\log(P^2 X)}{T}\right).
\end{split}
\end{equation}
}

Choosing $T = \frac{(PY)^{\frac{5}{6}}}{X^{\frac{1}{12}}}$, completes the proof of the lemma.
\end{proof}

\subsubsection{Averages of \texorpdfstring{$h(-N)$}{}}

\begin{lemma}
\label{3-1-2 main equation} 
With notation as before
\begin{equation*}
\frac{\zeta(2)\pi}{YX}\sum_{\substack{N\in [X, X+Y] \\ P\nmid N \\ N \text{ 
sq-free}}} \frac{h(-N)}{2}  = \frac{2A}{11\sqrt{X}} + O_\epsilon\left(\frac{1}{(PY)^{\frac{5}{12}}X^{\frac{11}{24}}} + \frac{P^{\frac{1}{6}  + \epsilon}X^{\frac{1}{12} + \epsilon}}{Y^{\frac{5}{6} - \epsilon}
}+\frac{Y^{1+\epsilon}P^\epsilon}{X^{3/2-\epsilon}}+\frac{X^{\frac{1}{12}+\epsilon}}{Y^{\frac{5}{6}-\epsilon}P^{\frac{5}{6}-\epsilon}}
\right).
\end{equation*}
\end{lemma}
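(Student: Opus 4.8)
The plan is to rerun the proof of Lemma~\ref{3-1-1 main equation} with the prime power $P^2$ replaced by $1$ throughout, keeping track of exactly two differences: the sum $\sum_N h(-N)$ is now normalised by $\sqrt X$ rather than by $P\sqrt X$, and the factor $\bigl(\tfrac{P^2}{m^2}\bigr)$ --- which in Lemma~\ref{3-1-1 main equation} both produced the side condition $\gcd(P,m)=1$ and was otherwise inert --- has disappeared, so that $P$ now enters the computation only through the constraint $P\nmid N$ on the outer sum. As before I would first discard the $N$ with $-N\equiv 2,3\pmod 4$ (their Gauss class number vanishes for $N>4$), so that the sum runs over square-free $N\in[X,X+Y]$ with $N\equiv 3\pmod4$ and $P\nmid N$. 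For such $N$ the discriminant $-N$ is fundamental, $\chi_{-N}=\bigl(\tfrac{-N}{\cdot}\bigr)$ is non-principal, and the Gauss class number formula gives $h(-N)=\tfrac{\sqrt N}{\pi}L(1,\chi_{-N})$, so that $\tfrac1{\sqrt X}\sum_N h(-N)=\tfrac1\pi\sum_N\sqrt{N/X}\,L(1,\chi_{-N})$; it then remains to estimate the right-hand side and multiply by $\tfrac{\zeta(2)\pi}{2Y\sqrt X}$ at the end to recover the normalisation of the statement.

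Next I would truncate the $L$-value as $L(1,\chi_{-N})=\sum_{n\le T}\tfrac{\chi_{-N}(n)}{n}+O\bigl(\tfrac{\sqrt N\log N}{T}\bigr)$; after the final normalisation this truncation error is $\asymp\tfrac{\log X}{T}$, and with the eventual choice of $T$ it becomes the last error term $\tfrac{X^{1/12+\epsilon}}{Y^{5/6-\epsilon}P^{5/6-\epsilon}}$ of the statement. I then split $\sum_{n\le T}$ into square terms $n=m^2$ and non-square terms, as in Lemma~\ref{3-1-1 main equation}. For the square part $\chi_{-N}(m^2)=\mathbf{1}[\gcd(m,N)=1]$; after replacing $\sqrt{N/X}$ by $1$ (the remainder costing $O(Y^2/X)$) and encoding $N\equiv 3\pmod 4$ by $\tfrac{\chi_1(N)-\chi_2(N)}{2}$, the inner sum is $\sum_N\mu^2(N)\bigl(\tfrac{N}{m^2}\bigr)\chi_i(N)$, with $\bigl(\tfrac{N}{m^2}\bigr)\chi_1$ principal modulo $2m$ and $\bigl(\tfrac{N}{m^2}\bigr)\chi_2$ non-principal modulo $4m$; \cite[Lemma~6.7]{Zub23} together with Lemma~\ref{eta} then produces the main term $\tfrac{4YA}{11\pi\zeta(2)}$ --- the same constant $A$ as in Lemma~\ref{3-1-1 main equation}, the relevant $m$-sum being identical to (indeed free of the $\gcd(P,m)=1$ restriction present in) that lemma --- plus errors $O_\epsilon\bigl(\tfrac{Y}{\sqrt T}+X^{3/5+\epsilon}+\tfrac{Y^2}{X}\bigr)$. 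For the non-square part $\bigl(\tfrac{-N}{n}\bigr)$ and its twists by $\chi_1,\chi_2$ are non-principal in $N$, so \cite[Lemma~6.7]{Zub23} bounds that contribution by $\ll_\epsilon T^{1/5+\epsilon}X^{3/5+\epsilon}+\tfrac{Y^2\log T}{X}$.

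The constraint $P\nmid N$ I would handle exactly as in Lemma~\ref{3-1-1 main equation}: drop it, at the cost of the $P\mid N$ contribution, which is a sum of the same shape over an interval of length $Y/P$ and so affects the answer only by $O\bigl(\tfrac1{P\sqrt X}\bigr)$ after normalisation --- below the leading $P/\sqrt X$ term of Proposition~\ref{our version of Nina P3.1}. Multiplying through by $\tfrac{\zeta(2)\pi}{2Y\sqrt X}$ turns the main term into $\tfrac{2A}{11\sqrt X}$, a factor $P^{-1}$ smaller than in Lemma~\ref{3-1-1 main equation} as it must be since here one estimates $h(-N)$ rather than $h(-P^2N)$; collecting the errors and choosing $T=\tfrac{(PY)^{5/6}}{X^{1/12}}$ --- the cutoff already optimal in Lemma~\ref{3-1-1 main equation} --- turns the $\tfrac{Y}{\sqrt T}$, $T^{1/5+\epsilon}X^{1/10+\epsilon}/Y$, $\tfrac{Y\log T}{X^{3/2}}$ and $\tfrac{\log X}{T}$ contributions into precisely the four error terms displayed, the standalone $X^{3/5+\epsilon}$ and $Y^2/X$ pieces being absorbed since $Y\ge X^{1/10}$ (as $\delta<\tfrac9{10}$).

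I do not expect any genuinely new analytic difficulty here: everything rests on \cite[Lemma~6.7]{Zub23} and Lemma~\ref{eta}, both already available, and the computation is in fact slightly simpler than that of Lemma~\ref{3-1-1 main equation} because $P$ no longer appears inside the character. The real work is bookkeeping --- checking that the truncation error and the $P\nmid N$ correction land inside the claimed error terms after the optimisation of $T$, and, conceptually, being careful that the ``main term'' $\tfrac{2A}{11\sqrt X}$ extracted here is genuinely present but of strictly lower order than the leading term of Proposition~\ref{our version of Nina P3.1}, so that when the four Gauss-class-number estimates are combined in Section~\ref{Section: Main Term Calculation 1} it enters the final answer only at the $\tfrac1{\sqrt X}$ level of the error.
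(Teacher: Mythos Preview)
Your proposal is correct and follows essentially the same approach as the paper: truncate $L(1,\chi_{-N})$ at a parameter $T$, split into square and non-square $n$, apply \cite[Lemma~6.7]{Zub23} and Lemma~\ref{eta} to extract the main term $\tfrac{4YA}{11\pi\zeta(2)}$ from the square part, bound the non-square part, and then choose $T=(PY)^{5/6}/X^{1/12}$. The only cosmetic difference is that the paper keeps the condition $P\nmid N$ in the summation throughout rather than explicitly removing it at cost $O\bigl(\tfrac{1}{P\sqrt X}\bigr)$ as you do, but this is immaterial to the argument.
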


\begin{proof}
We begin by calculating the average
{\allowdisplaybreaks
\begin{align*}
\frac{1}{\sqrt{X}} \sum_{\substack{N\in [X, X+Y] \\ P\nmid N \\ N \text{ sq-free}}} h(-N) &= \frac{1}{\pi}\sum_{\substack{N\in [X, X+Y]\\ P\nmid N \\ N \equiv 3\pmod{4} \\ N \text{ sq-free}}}\sqrt{\frac{N}{X}} L(1, \chi_{-N})\\
& = \frac{1}{\pi}\sum_{\substack{N\in [X, X+Y]\\ P\nmid N \\ N \equiv 3\pmod{4} \\ N \text{ sq-free}}}\sqrt{\frac{N}{X}} \sum_{n=1}^T \frac{\left( \frac{-N}{n}\right)}{n} + O\left(\frac{Y\sqrt{X}\log(X)}{T}\right) \\
&= \frac{1}{\pi}\sum_{\substack{N\in [X, X+Y]\\ P\nmid N \\ N \equiv 3\pmod{4} \\ N \text{ sq-free}}} \sum_{m=1}^{\sqrt{T}}   \frac{\sqrt{\frac{N}{X}}\left( \frac{-N}{m^2}\right)}{m^2} + \frac{1}{\pi}\sum_{\substack{N\in [X, X+Y]\\ P\nmid N \\ N \equiv 3\pmod{4} \\ N \text{ sq-free}}}\sum_{\substack{n=1 \\ n\neq \square}}^T   \frac{\sqrt{\frac{N}{X}} \left( \frac{-N}{n}\right)}{n} \\
& \quad + O\left(\frac{Y\sqrt{X}\log X}{T}\right).
\end{align*}}
where to show the second equality we use the fact that $\chi_{-N}$ is always a non-principal character when $N$ is square-free.
We proceed as before to obtain the estimates.
First we work with the square terms
{\allowdisplaybreaks
\begin{align*}
\Sq &= \frac{1}{\pi}\sum_{\substack{N\in [X, X+Y]\\ P\nmid N \\ N \equiv 3\pmod{4} \\ N \text{ sq-free}}} \sum_{m=1}^{\sqrt{T}}   \frac{\sqrt{\frac{N}{X}}\left( \frac{-N}{m^2}\right)}{m^2}\\
&=\frac{1}{\pi}\sum_{m=1}^{\sqrt{T}} \frac{1}{m^2}\sum_{\substack{N\in [X, X+Y]\\ P\nmid N \\ N \equiv 3\pmod{4} \\ N \text{ sq-free}}}   \left( \frac{-N}{m^2}\right)\left(1 + \left(\sqrt{1 + \frac{N-X}{X}} - 1\right) \right) \\
&= \frac{1}{\pi}\sum_{m=1}^{\sqrt{T}} \frac{1}{m^2}\sum_{\substack{N\in [X, X+Y]\\ P\nmid N \\ N \equiv 3\pmod{4} \\ N \text{ sq-free}}} \left( \frac{-N}{m^2}\right) + O\left(Y \left(\sqrt{1 + \frac{Y}{X}} - 1\right)\right)\\
&= \frac{1}{\pi}\sum_{m=1}^{\sqrt{T}} \frac{1}{m^2}\sum_{\substack{N\in [X, X+Y]\\ P\nmid N \\ N \equiv 3\pmod{4} \\ N \text{ sq-free}}} \left( \frac{N}{m^2}\right) + O\left(Y \left(\sqrt{1 + \frac{Y}{X}} - 1\right)\right)\\
&= \frac{1}{\pi}\sum_{m=1}^{\sqrt{T}} \frac{1}{m^2} \left(\sum_{\substack{N\in [X, X+Y] \\ P\nmid N}}    \mu^2(N)\left( \frac{N}{m^2}\right)\frac{\chi_1(N) - \chi_2(N)}{2}\right) + O\left(\frac{Y^2}{X}\right)\\
&= \frac{1}{\pi}\sum_{m=1}^{\sqrt{T}} \frac{Y}{\zeta(2)} \frac{\eta(2m)}{2m^2} + O_{\epsilon}\left( \frac{m^{\frac{1}{5}+\epsilon}X^{\frac{3}{5}+\epsilon}}{m^2}\right) + O\left(\frac{Y^2}{X}\right)\\
&= \frac{1}{\pi}\frac{4YA}{11\zeta(2)} + O_{\epsilon}\left(\frac{Y}{\sqrt{T}} + X^{\frac{3}{5}+\epsilon} + \frac{Y^2}{X}\right) \text{by Lemma~\ref{eta}}. 
\end{align*}
}Here, $\chi_1$ and $\chi_2$ are characters modulo 4, and $\chi_1$ is principal.
The character $\left(\frac{N}{m^2}\right)\chi_1(N)$ is principal modulo $2m$ whereas, $\left(\frac{N}{m^2}\right)\chi_2(N)$ is non-principal modulo $4m$. 
For the second last equality we use \cite[Lemma~6.7]{Zub23}.
Next we calculate the non-square terms.

{\allowdisplaybreaks\begin{align*}
    \NSq &= \frac{1}{\pi}\sum_{\substack{N\in [X, X+Y]\\ P\nmid N \\ N \equiv 3\pmod{4}}}\sum_{\substack{n=1 \\ n\neq \square}}^T   \frac{\sqrt{\frac{N}{X}} \left( \frac{-N}{n}\right)}{n}\\
    &= \frac{1}{\pi}  \sum_{\substack{N\in [X, X+Y]\\ P\nmid N \\ N \equiv 3\pmod{4}}}  \sum_{\substack{n=1 \\ n\neq \square}}^{T} \frac{1}{n}  \left( \frac{-N}{n}\right)\left(1 + \left(\sqrt{1 + \frac{N-X}{X}} - 1\right) \right) \\
    &=\frac{1}{\pi} \sum_{\substack{N\in [X, X+Y]\\ P\nmid N \\ N \equiv 3\pmod{4}}}     \sum_{\substack{n=1 \\ n\neq \square}}^{T} \frac{\left( \frac{-N}{n}\right)}{n} + O\left(\frac{Y^2\sum_{n\leq T} \frac{1}{n}}{X} \right) \\
    &=\frac{1}{\pi} \sum_{\substack{n=1 \\ n\neq \square}}^{T}\frac{\left( \frac{-1}{n}\right)}{n} \left( \sum_{\substack{N\in [X, X+Y] \\ P\nmid N}}   \left(\frac{N}{n}\right)   \frac{\chi_{1}(N)-\chi_2(N)}{2}\right) + O\left(\frac{Y^2\log T}{X} \right) \\
    &=\frac{1}{\pi} \sum_{\substack{n=1 \\ n\neq \square}}^{T} \frac{1}{n} O_{\epsilon}\left(n^{\frac{1}{5}+\epsilon}X^{\frac{3}{5}+\epsilon}\right) + O\left(\frac{Y^2\log T}{X} \right) \ll_{\epsilon} T^{\frac{1}{5}+\epsilon}X^{\frac{3}{5}+\epsilon} + \frac{Y^2\log T}{X}.
\end{align*}}

\noindent We remind the reader that since $n$ is not a square, we have that $\left(\frac{N}{n}\right)$ is non-principal.
Also, we know that $\left(\frac{N}{2}\right)$ is primitive modulo 8.
Therefore, the characters $\left(\frac{N}{n}\right)\chi_1(N)$ and $\left(\frac{N}{n}\right)\chi_2(N)$ are both non-principal.
We apply \cite[Lemma~6.7]{Zub23} to obtain the last equality.

In conclusion we obtain that for a truncation parameter $T$, we have
\begin{equation}
\label{3-1-2 main equation cut off}
\frac{\zeta(2)\pi}{YX}\sum_{\substack{N\in [X, X+Y] \\ P\nmid N \\ N \text{ 
sq-free}}} \frac{h(-N)}{2}  =\frac{2A}{11\sqrt{X}} + O_{\epsilon}\left( \frac{1}{\sqrt{TX}} + \frac{T^{\frac{1}{5}+\epsilon}X^{\frac{1}{10}+\epsilon}}{Y} + \frac{Y\log T}{X^{\frac{3}{2}}} + \frac{\log(X)}{T} \right). 
\end{equation}
As in \S\ref{sec 3-1-1}, we choose $T = \frac{(PY)^{\frac{5}{6}}}{X^{\frac{1}{12}}}$ to obtain the expression in the statement of the lemma.
\end{proof}

\subsection{When \texorpdfstring{$d \equiv 0\pmod{4}$}{}}
As before, we assume that $P\neq 2$ and that $[X, X+Y]$ is an interval of length $Y=o(X)$.
The calculations in Lemmas~\ref{3-2-1 main equation} and \ref{3-2-2 main equation} will account for the last two terms in Proposition~\ref{our version of Nina P3.1}.
More precisely,

{\allowdisplaybreaks
\begin{equation*}
\frac{\zeta(2)\pi}{YX}\sum_{\substack{N\in [X, X+Y] \\ P\nmid N\\ N \text{ 
sq-free}}} \frac{h(-4P^2N)}{2}  + \frac{h(-4N)}{2} = 
\frac{9A}{11} \frac{P}{\sqrt{X}} + O_\epsilon\left(\frac{1}{\sqrt{X}}+\frac{P^{\frac{7}{12}}}{Y^{\frac{5}{12}}X^{\frac{11}{24}}}+ \frac{(PY)^{1+\epsilon}}{X^{\frac32 -\epsilon }} + \frac{P^{\frac{7}{6}+\epsilon}X^{\frac{1}{12} + \epsilon}}{Y^{\frac{5}{6}-\epsilon}}\right).
\end{equation*}
}

\subsubsection{Averages of $h(-4P^2N)$}

\begin{lemma}
\label{3-2-1 main equation} 
With notation as above
{\allowdisplaybreaks
\begin{equation*}
\frac{\zeta(2)\pi}{YX} \sum_{\substack{N\in [X, X+Y] \\ P\nmid N \\ N \text{ 
sq-free}}} \frac{h(-4P^2N)}{2}  = \frac{9AP}{11\sqrt{X}} + O_\epsilon\left(\frac{1}{P\sqrt{X}}+\frac{P^{\frac{7}{12}}}{Y^{\frac{5}{12}}X^{\frac{11}{24}}}+ \frac{(PY)^{1+\epsilon}}{X^{\frac32 -\epsilon }} + \frac{P^{\frac{7}{6}+\epsilon}X^{\frac{1}{12} + \epsilon}}{Y^{\frac{5}{6}-\epsilon}}\right).
\end{equation*}}
\end{lemma}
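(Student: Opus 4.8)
The plan is to carry out the argument of Lemma~\ref{3-1-1 main equation} with the discriminant $-P^2N$ replaced by $-4P^2N$; two structural features change. First, $-4P^2N\equiv 0\pmod 4$ is a valid negative discriminant for \emph{every} square-free $N$ coprime to $P$, so, in contrast to the $h(-P^2N)$ case (where only $N\equiv 3\pmod 4$ contributed), there is no congruence restriction modulo $4$ on the summation variable. Second, $\sqrt{4P^2N}=2P\sqrt N$, so the Gauss class number formula gains a factor $2$. Thus the first step is
\[
\frac{1}{P\sqrt X}\sum_{\substack{N\in[X,X+Y]\\ P\nmid N,\ \mu^2(N)=1}}h(-4P^2N)
=\frac{2}{\pi}\sum_{\substack{N\in[X,X+Y]\\ P\nmid N,\ \mu^2(N)=1}}\sqrt{\tfrac{N}{X}}\;L\big(1,\chi_{-4P^2N}\big),
\]
where $\chi_{-4P^2N}=\big(\tfrac{-4P^2N}{\cdot}\big)$ is the associated Kronecker character, non-principal for all admissible $N$; the hypothesis $P\neq 2$ guarantees, as in the discussion following Theorem~\ref{Skoruppa-Zagier}, that $4P^2N$ acquires no square divisors beyond $1,4,P^2,4P^2$, even when $N$ is even.

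Next I would truncate at a parameter $T$ via $L(1,\chi_{-4P^2N})=\sum_{n\le T}\chi_{-4P^2N}(n)/n+O\big(P\sqrt N\log(P^2N)/T\big)$, which after the overall normalisation contributes $O\big(P^2\log(P^2X)/T\big)$, exactly as in~\eqref{3-1-1 main equation cutoff}. Then split the $n$-sum into squares $n=m^2$ and non-squares. For non-square $n$ one has $\chi_{-4P^2N}(n)=0$ when $n$ is even, and $\chi_{-4P^2N}(n)=\big(\tfrac{-1}{n}\big)\big(\tfrac{N}{n}\big)$ when $n$ is odd and $\gcd(n,PN)=1$; thus the inner sum in $N$ is a non-principal character sum of conductor $\ll n$, and, after discarding the harmless condition $P\nmid N$ and applying \cite[Lemma~6.7]{Zub23}, this contribution is $\ll_\epsilon T^{1/5+\epsilon}X^{3/5+\epsilon}+Y^2(\log T)/X$, just as in Lemma~\ref{3-1-1 main equation}. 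For $n=m^2$ one has $\chi_{-4P^2N}(m^2)=1$ exactly when $m$ is odd and $\gcd(m,PN)=1$, so, again absorbing the removal of $P\nmid N$ into the error, the square part becomes
\[
\frac{2}{\pi}\sum_{\substack{m\le\sqrt T\\ m\ \mathrm{odd},\ \gcd(m,P)=1}}\frac{1}{m^2}\sum_{\substack{N\in[X,X+Y]\\ \mu^2(N)=1,\ \gcd(N,m)=1}}\sqrt{\tfrac{N}{X}}+O\big(Y^2/X\big).
\]
By \cite[Lemma~6.7]{Zub23} (principal character modulo $m$) the inner sum is $\tfrac{Y}{\zeta(2)}\prod_{p\mid m}\tfrac{p}{p+1}+O_\epsilon\big(m^{1/5+\epsilon}X^{3/5+\epsilon}\big)$, and summing in $m$ produces the Euler product
\[
\sum_{\substack{m\ \mathrm{odd}\\ \gcd(m,P)=1}}\frac{1}{m^2}\prod_{p\mid m}\frac{p}{p+1}
=\prod_{\substack{p\ \mathrm{odd}\\ p\neq P}}\Big(1+\frac{p}{(p+1)^2(p-1)}\Big)
=\frac{9A}{11}\big(1+O(P^{-2})\big),
\]
since the $p=2$ Euler factor of $A$ equals $\tfrac{11}{9}$ (cf.\ Lemma~\ref{eta}). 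Together with the factor $2$ this yields the main term $\tfrac{9AP}{11\sqrt X}$; adding up the tail $m>\sqrt T$ ($\ll 1/\sqrt T$), the cost of the restriction $\gcd(m,P)=1$ ($\ll P^{-2}$), the \cite[Lemma~6.7]{Zub23} errors, the $O(Y^2/X)$ terms, and the error from removing $P\nmid N$, gives a cut-off estimate of the same shape as~\eqref{3-1-1 main equation cutoff}.

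Choosing $T=(PY)^{5/6}/X^{1/12}$ as in Lemma~\ref{3-1-1 main equation} then balances the error terms and yields the claim. Most of the work is the (routine) error bookkeeping; the only genuinely new points — and the main obstacles — are verifying the $2$-adic behaviour of $-4P^2N$ when $N$ is even (which is exactly where $P\neq 2$ is used) and checking that the repeated passage from $\sum_{P\nmid N}$ to the full square-free sum is absorbed into the stated error. The identical computation with $2P\sqrt N$ replaced by $2\sqrt N$ then gives Lemma~\ref{3-2-2 main equation}, with main term $\tfrac{9A}{11\sqrt X}$.
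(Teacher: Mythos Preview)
Your argument is correct and follows the paper's proof essentially step for step: the class number formula with the extra factor~$2$, truncation at $T$, the square/non-square split, bounding the non-square part via \cite[Lemma~6.7]{Zub23}, identifying the main term through the Euler product over odd $p\neq P$ (Lemma~\ref{eta}, equation~\eqref{eq: 5.4}) to get $\tfrac{9A}{11}$, and the final choice $T=(PY)^{5/6}/X^{1/12}$. The only cosmetic difference is that you make the vanishing of $\chi_{-4P^2N}(n)$ for even $n$ explicit, whereas the paper leaves this implicit in the factor $\big(\tfrac{-4P^2}{n}\big)$.
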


\begin{proof}
As we have done previously, consider

{\allowdisplaybreaks
\begin{align*}
\frac{1}{P\sqrt{X}} \sum_{\substack{N\in [X, X+Y] \\ P\nmid N \\ N \text{ 
sq-free}}} h(-4P^2N) &= \frac{2}{\pi} \sum_{\substack{N\in [X, X+Y] \\ P\nmid N \\ N \text{ 
sq-free}}} \sqrt{\frac{N}{X}} L(1, \chi_{-4P^2N})\\
&= \frac{2}{\pi} \sum_{\substack{N\in [X, X+Y] \\ P\nmid N \\ N \text{ 
sq-free}}} \sqrt{\frac{N}{X}} \sum_{n=1}^T \frac{\left( \frac{-4P^2N}{n}\right)}{n} + O\left(\frac{YP\sqrt{X}\log(P^2 X)}{T}\right)\\
&=\frac{2}{\pi} \sum_{\substack{N\in [X, X+Y] \\ P\nmid N \\ N \text{ 
sq-free}}} \sqrt{\frac{N}{X}} \sum_{m=1}^{\sqrt{T}} \frac{\left( \frac{-4P^2N}{m^2}\right)}{m^2} +  \frac{2}{\pi}  \sum_{\substack{N\in [X, X+Y] \\ P\nmid N \\ N \text{ 
sq-free}}}\sqrt{\frac{N}{X}} \sum_{\substack {n=1 \\ n\neq \square}}^T \frac{\left( \frac{-4P^2N}{n}\right)}{n}\\
&+O\left(\frac{YP\sqrt{X}\log(P^2 X)}{T}\right).
\end{align*}
}
As before, we estimate the two double sums separately.
First we consider the square terms

{\allowdisplaybreaks
\begin{align*}
\Sq 
    &=\frac{2}{\pi} \sum_{\substack{N\in [X, X+Y] \\ P\nmid N \\ N \text{ sq-free}}} \sqrt{\frac{N}{X}} \sum_{m=1}^{\sqrt{T}} \frac{\left( \frac{-4P^2N}{m^2}\right)}{m^2}\\
    &=\frac{2}{\pi} \sum_{m=1}^{\sqrt{T}}\frac{1}{m^2} \sum_{\substack{N\in [X, X+Y] \\ P\nmid N \\ N \text{ sq-free}}} \left( \frac{-4P^2N}{m^2}\right)\mu^2(N)\left(1+\sqrt{1+\frac{N-X}{X}}-1\right)\\
    &=\frac{2}{\pi} \sum_{m=1}^{\sqrt{T}}\frac{1}{m^2} \sum_{\substack{N\in [X, X+Y] \\ P\nmid N \\ N \text{ sq-free}}} \left( \frac{-4P^2N}{m^2}\right)\mu^2(N)+O\left(\frac{Y^2}{X}\frac{1}{m^2}\right)\\
    &=\frac{2}{\pi}\sum_{\substack{m=1 \\ \gcd(m,2P)=1}}^{\sqrt{T}}\frac{1}{m^2}\frac{Y}{\zeta(2)}\eta(m)+O_{\epsilon}\left(X^{\frac{3}{5}+\epsilon}m^{\epsilon-2}+\frac{Y^2}{X}\frac{1}{m^2}\right)\\
    &=\frac{1}{\pi} \frac{18YA}{\zeta(2)11}+O_{\epsilon}\left( \frac{Y}{P^2}+\frac{Y}{\sqrt{T}} + X^{\frac{3}{5}+\epsilon}+\frac{Y^2}{X}\right) \text{ by Lemma~\ref{eta}},
\end{align*}
}

\noindent The second last equality follows from \cite[Lemma~6.7]{Zub23}.
Next we estimate the other double sum  to obtain

{\allowdisplaybreaks
\begin{align*}
\NSq 
    &= \frac{2}{\pi}  \sum_{\substack{N\in [X, X+Y] \\ P\nmid N \\ N \text{ sq-free}}} \sqrt{\frac{N}{X}} \sum_{\substack {n=1 \\ n\neq \square}}^T \frac{\left( \frac{-4P^2N}{n}\right)}{n}\\
    &=\frac{2}{\pi}\sum_{\substack {n=1 \\ n\neq \square}}^T\frac{1}{n}\sum_{\substack{N\in [X, X+Y] \\ P\nmid N }}\left( \frac{-4P^2N}{n}\right)\mu^2(N)\left(1+\sqrt{1+\frac{N-X}{X}}-1\right)\\
    &=\frac{2}{\pi}\sum_{\substack {n=1 \\ n\neq \square}}^T\frac{1}{n}\sum_{\substack{N\in [X, X+Y] \\ P\nmid N }}\left( \frac{-4P^2N}{n}\right)\mu^2(N)+O\left(\frac{Y^2\log(T)}{X}\right)\\
    &=\frac{2}{\pi}\sum_{\substack {n=1 \\ n\neq \square}}^T\frac{1}{n}\left(\frac{-4P^2}{n}\right)\sum_{\substack{N\in [X, X+Y] \\ P\nmid N}}\left( \frac{N}{n}\right)\mu^2(N)+O\left(\frac{Y^2\log(T)}{X}\right)\\
    &=\frac{2}{\pi}\sum_{\substack {n=1 \\ n\neq \square}}^T\frac{1}{n}\left(\frac{-4P^2}{n}\right) O_{\epsilon}\left(X^{\frac{3}{5}+\epsilon}n^{\frac{1}{5}+\epsilon}\right)+O\left(\frac{Y^2\log(T)}{X}\right)\\
    & \ll_{\epsilon} T^{\frac{1}{5}+\epsilon}X^{\frac{3}{5} + \epsilon}+\frac{Y^2\log(T)}{X}.
\end{align*}
}
For a truncation parameter $T$, we have
{\allowdisplaybreaks
\begin{equation}
\label{3-2-1 main equation cut off}
\begin{split}
\frac{\zeta(2)\pi}{YX} \sum_{\substack{N\in [X, X+Y] \\ P\nmid N \\ N \text{ 
sq-free}}} \frac{h(-4P^2N)}{2}  & = \frac{9AP}{11\sqrt{X}} \\
& + O_\epsilon\left( \frac{1}{P\sqrt{X}} + \frac{P}{\sqrt{TX}} + \frac{PT^{\frac{1}{5}+\epsilon}X^{\frac{1}{10}+\epsilon}}{Y} + \frac{P Y\log T}{X^{\frac{3}{2}}} + \frac{P^2\log(P^2 X)}{T}\right).
\end{split}
\end{equation}
}
The lemma follows by choosing $T=\frac{(PY)^{\frac{5}{6}}}{X^{\frac{1}{12}}}$.
\end{proof}

\subsubsection{Averages of $h(-4N)$}

\begin{lemma}
\label{3-2-2 main equation}
With notation as before,
\[\frac{\zeta(2)\pi}{YX} \sum_{\substack{N\in [X, X+Y] \\ P\nmid N \\ N \text{ sq-free}}} \frac{h(-4N)}{2} = \frac{9A}{22\sqrt{X}} + O_\epsilon\left(\frac{1}{(PY)^{\frac{5}{12}}X^{\frac{11}{24}}} + \frac{P^{\frac{1}{6}  + \epsilon}X^{\frac{1}{12} + \epsilon}}{Y^{\frac{5}{6} - \epsilon}
}+\frac{Y^{1+\epsilon}P^\epsilon}{X^{\frac{3}{2}-\epsilon}}+\frac{X^{\frac{1}{12}+\epsilon}}{Y^{\frac{5}{6}-\epsilon}P^{\frac{5}{6}-\epsilon}}
\right).
\]
\end{lemma}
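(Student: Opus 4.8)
The proof runs parallel to Lemmas~\ref{3-1-2 main equation} and~\ref{3-2-1 main equation}; I indicate the points of departure. First apply the Gauss class number formula in the form $h(-4N) = \frac{2\sqrt{N}}{\pi}L(1,\chi_{-4N})$, which is available for \emph{every} square-free $N$ coprime to $P$ since $-4N \equiv 0 \pmod 4$ imposes no congruence restriction — in contrast with $h(-N)$, which forced $N \equiv 3 \pmod 4$ in Lemma~\ref{3-1-2 main equation}. As $-4N$ is never a perfect square, $\chi_{-4N} = \left(\tfrac{-4N}{\cdot}\right)$ is a non-principal character of modulus $4N$, so one may truncate $L(1,\chi_{-4N}) = \sum_{n\le T}\frac{(\frac{-4N}{n})}{n} + O\!\left(\frac{\sqrt{X}\log X}{T}\right)$ at a parameter $T$ and, exactly as in the earlier lemmas, replace $\sqrt{N/X}$ by $1$ at the cost of $O(Y^2/X)$-type terms; the truncation itself contributes $O(\log X/T)$ to $\frac{\zeta(2)\pi}{YX}\sum \frac{h(-4N)}{2}$.

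Next split the truncated $n$-sum into its square part ($n = m^2$) and its non-square part. In the square part $\left(\tfrac{-4N}{m^2}\right) = \left(\tfrac{-4N}{m}\right)^2$ vanishes unless $m$ is odd, in which case it equals $\left(\tfrac{N}{m}\right)^2$, the indicator of $\gcd(N,m)=1$; interchanging the order of summation, the inner sum counts square-free $N\in[X,X+Y]$ coprime to $m$ (and to $P$), which by \cite[Lemma~6.7]{Zub23} is $\frac{Y}{\zeta(2)}\eta(m) + O_\epsilon(m^{1/5+\epsilon}X^{3/5+\epsilon})$, the contribution of the $P\nmid N$ restriction being handled as in Lemmas~\ref{3-1-2 main equation} and~\ref{3-2-1 main equation}. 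Summing over odd $m\le\sqrt T$ via Lemma~\ref{eta} — which here evaluates $\sum_{m\text{ odd}}\eta(m)/m^2$, the source of the new constant — and multiplying by the $\frac{2}{\pi}$ in front yields the main term $\frac{9A}{22\sqrt X}$. In the non-square part $\left(\tfrac{N}{n}\right)$ is a non-principal character in $N$ that remains non-principal after the modulus-$8$ twist from the Kronecker symbol at $2$, so \cite[Lemma~6.7]{Zub23} bounds the inner $N$-sum by $O_\epsilon(n^{1/5+\epsilon}X^{3/5+\epsilon})$, whence its $\sum_{n\le T}\frac1n$-weighted total is $O_\epsilon(T^{1/5+\epsilon}X^{3/5+\epsilon})$, which is absorbed.

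Assembling the pieces gives, for an arbitrary truncation parameter $T$, an estimate of exactly the shape of \eqref{3-1-2 main equation cut off},
\[
\frac{\zeta(2)\pi}{YX}\sum_{\substack{N\in[X,X+Y]\\ P\nmid N\\ N\text{ sq-free}}}\frac{h(-4N)}{2} = \frac{9A}{22\sqrt{X}} + O_\epsilon\!\left(\frac{1}{\sqrt{TX}} + \frac{T^{1/5+\epsilon}X^{1/10+\epsilon}}{Y} + \frac{Y\log T}{X^{3/2}} + \frac{\log X}{T}\right),
\]
and the choice $T = (PY)^{5/6}/X^{1/12}$, as in Lemmas~\ref{3-1-2 main equation} and~\ref{3-2-1 main equation}, converts the error into the four terms stated.

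The one genuinely delicate point is the constant: because the local factor at $2$ is now supplied by $\chi_{-4N}$ rather than by $\chi_{-N}$, the evaluation of Lemma~\ref{eta} that enters is $\sum_{m\text{ odd}}\eta(m)/m^2$ rather than $\sum_m\eta(2m)/(2m^2)$, and it is this change — together with the need to keep the Euler factors at $2$, at $P$, and at the primes dividing $m$ strictly separated — that turns the $\frac{2}{11} = \frac{4}{22}$ of Lemma~\ref{3-1-2 main equation} into $\frac{9}{22}$. Everything else is the bookkeeping already carried out in the preceding lemmas.
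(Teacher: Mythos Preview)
Your proof is correct and follows essentially the same route as the paper: Gauss class number formula for $h(-4N)$ (no congruence restriction on $N$), truncation of $L(1,\chi_{-4N})$ at a parameter $T$, square/non-square split with the odd-$m$ restriction in the square part, application of \cite[Lemma~6.7]{Zub23} and Lemma~\ref{eta} (equation~\eqref{eq: 5.2}) to extract the constant, and the same choice $T=(PY)^{5/6}/X^{1/12}$ to obtain the displayed error terms. Your remark isolating the source of the constant---that $\sum_{m\text{ odd}}\eta(m)/m^2$ replaces $\sum_m \eta(2m)/(2m^2)$---matches exactly what the paper does.
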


\begin{proof}
The idea of the proof is exactly the same as before.    
{\allowdisplaybreaks
\begin{align*}
    \frac{1}{\sqrt{X}} \sum_{\substack{N\in [X, X+Y] \\ P\nmid N \\ N \text{sq-free}}} h(-4N) &= \frac{2}{\pi}\sum_{\substack{N\in [X, X+Y] \\ P\nmid N}}\sqrt{\frac{N}{X}} L(1, \chi_{-4N})\\
    &= \frac{2}{\pi} \sum_{\substack{N\in [X, X+Y] \\ P\nmid N \\ N \text{sq-free}}} \sqrt{\frac{N}{X}} \sum_{n=1}^T \frac{\left( \frac{-4N}{n}\right)}{n} + O\left(\frac{Y\sqrt{X}\log( X)}{T}\right)\\
    &=\frac{2}{\pi} \sum_{\substack{N\in [X, X+Y] \\ P\nmid N \\ N \text{sq-free}}} \sqrt{\frac{N}{X}} \sum_{m=1}^{\sqrt{T}} \frac{\left( \frac{-4N}{m^2}\right)}{m^2} +  \frac{2}{\pi} \sum_{\substack{N\in [X, X+Y] \\ P\nmid N \\ N \text{sq-free}}} \sqrt{\frac{N}{X}} \sum_{\substack {n=1 \\ n\neq \square}}^T \frac{\left( \frac{-4N}{n}\right)}{n}\\
    & + O\left(\frac{Y\sqrt{X}\log X}{T}\right).
\end{align*}
}

\noindent For the first double sum which are the `square terms', we obtain the following expression where we note that the sum is over all $m$ that is odd

{
\allowdisplaybreaks
\begin{align*}
    \Sq &= \frac{2}{\pi} \sum_{\substack{N\in [X, X+Y] \\ P\nmid N \\ N \text{sq-free}}} \sqrt{\frac{N}{X}} \sum_{m=1}^{\sqrt{T}} \frac{\left( \frac{-4N}{m^2}\right)}{m^2} \\
    &=\frac{2}{\pi} \sum_{\substack{m=1\\\gcd(m,2)=1}}^{\sqrt{T}}\frac{1}{m^2}\frac{Y}{\zeta(2)}\eta(m)+O_{\epsilon}\left(\frac{X^{\frac{3}{5} + \epsilon}}{m^{2-\epsilon}} + \frac{Y^2}{X}\frac{1}{m^2}\right)\\
    & = \frac{9YA}{11\pi \zeta(2)}+O_{\epsilon}\left( \frac{Y}{\sqrt{T}} + X^{\frac{3}{5}+\epsilon} + \frac{Y^2}{X}\right) \text{ by Lemma~\ref{eta}}.
\end{align*}
}
The second double sum which are the `non-square' terms can be estimated as before and we obtain
\begin{align*}
\NSq & = \frac{2}{ \pi} \sum_{\substack{N\in [X, X+Y] \\ P\nmid N \\ N \text{ sq-free}}} \sqrt{\frac{N}{X}} \sum_{\substack {n=1 \\ n\neq \square}}^T \frac{\left( \frac{-4N}{n}\right)}{n} 
\ll_{\epsilon}\left( T^{\frac{1}{5}+\epsilon}X^{\frac{3}{5}+\epsilon} + \frac{Y^2\log(T)}{X}\right).
\end{align*}

Therefore,
\begin{equation}
\label{3-2-2 main equation cutoff}
\frac{\zeta(2)\pi}{YX} \sum_{\substack{N\in [X, X+Y] \\ P\nmid N \\ N \text{ sq-free}}} \frac{h(-4N)}{2}  = \frac{9A}{22\sqrt{X}} + O_{\epsilon}\left( \frac{1}{\sqrt{TX}} + \frac{T^{\frac{1}{5}+\epsilon}X^{\frac{1}{10}+\epsilon}}{Y} + \frac{Y\log T}{X^{\frac{3}{2}}} + \frac{\log(X)}{T} \right). 
\end{equation}
The result follows from choosing $T = \frac{(PY)^\frac{5}{6}}{X^{\frac{1}{12}}}$. 
\end{proof}

\section{Remainder analysis} 
\label{Section: Remainder}

In this section, we work under the assumption that $P\neq 2$ is a prime, $r$ is a positive integer, and $X>Y>0$ satisfies $r^2(X+Y)<4P^2$.
Given positive integers $r$ and $d$, define the set 
\[
\mathcal{A}_{r,d}=\left\{N\in \mathbb{Z}  \ \Big\vert \ N \textup{ square-free}, \ \gcd(P,N)=1,  \ d^2\mid r^2N-4P^2, \ \frac{r^2N^2-4P^2N}{d^2}\equiv 0 \textup{ 
or }1\pmod 4\right\}.
\]
First, we calculate the size of the above set.
We do so by breaking the calculation into several cases:
\begin{itemize}
    \item[\textbf{($r$ odd)}]
    \label{r odd} 
    Assume that $r$ is odd.
    Then $r^2N-4P^2$ is not divisible by $4$ as $N$ is square-free.
    Thus, any divisor $d$ satisfying $d^2\mid (r^2N-4P^2)$ has to be odd as well.

    Observe that,
    \[
    r^2 N^2 - 4P^2 N\equiv 0 \text{ or } 1\pmod 4.
    \]
    As $d$ is odd, it suffices to determine whether we can solve the congruence
    \[
    r^2N\equiv 4P^2\pmod{d^2}.
    \]
    There is a unique solution when $\gcd(d,r)=\gcd(d,2)=1$ and no solutions otherwise.
    
    \item[\textbf{($r$ even)}]
    \label{r even} 
    Assume that $r$ is even.
    
    \noindent \emph{Claim:} $\gcd(P,d)=\gcd(P,r)=1$. \newline
    \emph{Justification:} By definition $r^2N\le 4P^2$.
    As $N\ge X>4$, we deduce that $r<P$.
    If $P\mid d$, we obtain that $P^2\mid r^2N$.
    As $N$ is square-free, this implies $P\mid r$ which is impossible.
    This proves the claim. 

    Let us write $r=2\ell$.
    The condition 
    \[
    \frac{r^2N^2-4P^2N}{d^2}\equiv 0\text{ or }1\pmod{4}
    \]
    is equivalent to the existence of an integer $k$ such that 
    $4\ell^2 N - 4P^2=kd^2$ and $kN\equiv 0 \text{ or } 1\pmod {4}$.
    This calculation can be further divided into two cases.
    \begin{itemize}
        \item[($d$ odd)] If $d$ is odd, then $4\mid k$.
        Let $k=4k'$.
        Thus, we look for an $N$ that satisfies
        \[
        \ell^2 N - P^2=k'd^2
        \]
        which is equivalent to $\ell^2N\equiv P^2 \pmod{d^2}$.
        This congruence has exactly one solution if $\gcd(\ell,d)=1$ and zero solutions otherwise. 
        \item[($d$ even)] Assume that $d=2b$ is even.
        As $N$ is square-free it is either odd or congruent to $2$ modulo $4$.
        
        If $N$ is even, then $4\ell^2N$ and $4kb^2$ have to be divisible by $8$; but $8 \nmid 4P^2$.
        
        So $N$ must be odd if $d$ is even.
        As $kN\equiv 0 \text{ or }1\pmod{4}$ we will distinguish two more cases
        \begin{itemize}
            \item[($4 \mid k$)] In this situation we try to solve 
            \[
            \ell^2 N - P^2\equiv 0\mod {4b^2}.
            \]
            This congruence has a solution if and only if $\ell$ is coprime to $2b$ and no solutions otherwise.
            \item[($4 \nmid k$)]
            We now consider the case that $N\equiv k^{-1}\equiv k\pmod 4$.
            Writing $k=N+4k'$ we obtain
            \[
            \ell^2N-P^2=Nb^2 + 4k'b^2. 
            \]
            This is equivalent to $N(\ell^2-b^2)\equiv P^2 \pmod{4b^2}$. This congruence has a solution if $\ell$ and $b$ are coprime and exactly one of them is even.
            In all other cases there are no solutions. 
        \end{itemize}
    \end{itemize}
\end{itemize}

In summary we obtain that $\mathcal{A}_{r,d}=\left\{N\in \mathbb{Z}  \ \Big\vert \ N \textup{ square-free}, \ \gcd(P,N)=1, \ N\pmod{d^2}\in \mathcal{R}_{r,d}\right\}$, where 
\[
\abs{\mathcal{R}_{r,d}} = 
\begin{cases} 
1 & \text{ if } (r,d)=(d,2)=1 \\
1 & \text{ if }r \text{ even}, \  (r,d)=1 \\
1 & \text{ if } (r,d)=2, \ 4\nmid d\\
2 & \text{ if } r \text{ even}, \ (r,d)=2, \ 4\mid d \\
0 & \text{ otherwise.}
\end{cases}
\]
The two elements in the fourth point appear as we get one solution in case ($r$ even)--($d$ even)--($4 \mid k$) and one solution for the case ($r$ even)--($d$ even)--($4 \nmid k$).

\section{Main Term Calculation via Averages of Gauss Class Numbers II}
\label{Section: Main Term Calculation 2}

Throughout this section we assume that $P^2\ll X^{1+\delta_2}$.

For $N$ a square-free positive integer and $P$ a prime satisfying $\gcd(P,N)=1$, the goal of this section is to calculate $\sum H_1(r^2N^2 - 4P^2N)$ with the sum of running over $0<r\leq 2P/\sqrt{X+Y}$.
As observed before, the summation can be expressed as a double sum of Gauss class numbers which in turn can be written explicitly in terms of special values of $L$-functions.
More precisely, for a divisor $d^2\mid (r^2 N - 4P^2)$ satisfying $\frac{r^2 N^2 - 4P^2 N}{d^2} \equiv 0 \text{ or }1\pmod{4}$, the class number formula asserts that
\[
h\left( \frac{r^2 N^2 - 4P^2 N}{d^2} \right) = \frac{\sqrt{4P^2 N - r^2 N^2}}{\pi d} L(1, \chi_{\frac{r^2 N^2 - 4P^2 N}{d^2}}).
\]
Therefore, as $r$ varies in the range $1$ to $ 2P/\sqrt{X+Y}$, we have 
\[
\sum_{\substack{N\in [X, X+Y] \\ P\nmid N \\ N \text{ sq-free}}} H_1(r^2 N^2 - 4P^2 N) = \frac{1}{\pi}\sum_{d< 2P} \sum_{\substack{N\in [X,X+Y]\\ N\in \mathcal{A}_{r,d}}} \frac{L\left(1, \chi_{\frac{r^2 N^2 - 4P^2 N}{d^2}}\right)}{d}  \sqrt{4P^2 N - r^2 N^2}.
\]
Note that 

{\allowdisplaybreaks
\begin{align*}
\sqrt{4P^2N-r^2N^2}-\sqrt{4P^2X-r^2X^2}&=\sqrt{4P^2-r^2N}\left(\sqrt{N}-\sqrt{X}\right)-\sqrt{4P^2X-r^2X^2}\left(1-\sqrt{1-\frac{r^2(N-X)}{4P^2-r^2X}}\right)\\
&\ll \sqrt{4P^2-r^2N}\sqrt{X}\left( \sqrt{1+\frac{Y}{X}}-1\right)-\sqrt{X}\sqrt{P^2-r^2X}\left(1-\sqrt{1-\frac{r^2Y}{4P^2-r^2X}}\right) \\
&\ll P\frac{Y}{\sqrt{X}}+r\sqrt{Y}\sqrt{X}.
\end{align*}
}
For a non-trivial character $\chi$ of conductor $q$, we know by Siegel's bound that $\abs{L(1, \chi)} \ll \log(q)$ (see for example \cite[p.~2]{friedlander}).
Therefore,
{\allowdisplaybreaks
\begin{equation}
\label{to-be.truncated}
\begin{split}
   \sum_{\substack{N\in [X, X+Y] \\ P\nmid N \\ N \text{ sq-free}}} H_1(r^2 N^2 - 4P^2 N) &= \frac{1}{\pi}\sum_{d< 2P} \sum_{\substack{N\in [X,X+Y]\\ N\in \mathcal{A}_{r,d}}} \frac{L\left(1, \chi_{\frac{r^2 N^2 - 4P^2 N}{d^2}}\right)}{d}  \sqrt{4P^2 X - r^2 X^2}\\
    & + O\left(Y^2P^{1+\epsilon}X^{-\frac{1}{2}+\epsilon} + rX^{\frac{1}{2}+\epsilon}Y^{\frac{3}{2}}P^{\epsilon}\right).
\end{split}   
\end{equation}}


We now truncate the main term and obtain 

{\allowdisplaybreaks
\begin{align*}
\eqref{to-be.truncated} & = \frac{1}{\pi}\sum_{d< 2P} \sum_{\substack{N\in [X,X+Y]\\ N\in \mathcal{A}_{r,d}}}\sum_{n=1}^T\frac{\sqrt{4P^2X-r^2X^2}}{nd}\left(\frac{(r^2N^2-4P^2N)/d^2}{n}\right)\\
& \quad + O\left(P\sqrt{X}\sum_{d< 2P} \sum_{\substack{N\in [X,X+Y]\\ N\in \mathcal{A}_{r,d}}}\frac{P\sqrt{X}\log(P^2X)}{d^2T}+Y^2P^{1+\epsilon}X^{-\frac{1}{2}+\epsilon}+rX^{\frac{1}{2}+\epsilon}Y^{\frac{3}{2}}P^{\epsilon}\right)\\
&=\frac{1}{\pi}\sum_{d< 2P} \sum_{\substack{N\in [X,X+Y]\\ N\in \mathcal{A}_{r,d}}}\sum_{n=1}^T\frac{\sqrt{4P^2X-r^2X^2}}{nd}\left(\frac{(r^2N^2-4P^2N)/d^2}{n}\right)\\
& \quad + O\left(Y\frac{P^2X{\log(P^2X)}}{T}+Y^2P^{1+\epsilon}X^{-\frac{1}{2}+\epsilon}+rX^{\frac{1}{2}+\epsilon}Y^{\frac{3}{2}}P^{\epsilon}\right)\\
&=\frac{\sqrt{4P^2X-r^2X^2}}{\pi}\sum_{d< 2P}\sum_{n\le T}\frac{S_{n,d,r}}{nd} + O\left(\frac{YXP^2\log(P^2 X)}{T}+\frac{Y^2P^{1+\epsilon}}{X^{\frac{1}{2}-\epsilon}}+rX^{\frac{1}{2}+\epsilon}Y^{\frac{3}{2}}P^{\epsilon}\right),
\end{align*}}
where 
\[
S_{n,d,r} :=\sum_{\substack{N\in [X,X+Y]\\ N\in \mathcal{A}_{r,d}}}\mu^2(N)\left(\frac{N}{n}\right)\left(\frac{(r^2N-4P^2)/d^2}{n}\right) = \sum_{\substack{N\in [X,X+Y]\\ N\in \mathcal{A}_{r,d}}}\left(\frac{N}{n}\right)\left(\frac{(r^2N-4P^2)/d^2}{n}\right).
\]

The last equality follows from the fact that $\mu^2(N)=1$.
Indeed, $N\in \mathcal{A}_{r,d}$ forces $N$ to be always square-free. 
We can now state the main result of this section; its proof will occupy the remainder of this section.

\begin{proposition}
\label{our version of Nina P3.3}
Let $P\neq 2$ be a prime, let $r$ be a positive integer, and let $X>Y>0$ be such that $4P^2 > r^2(X+Y)$.
Then
\begin{equation*}
\begin{split}
   \sum_{\substack{N\in [X, X+Y] \\ P\nmid N \\ N \text{ sq-free}}} H_1(r^2 N^2 - 4P^2 N) &= \frac{Y\sqrt{4P^2 X - r^2X^2}}{\zeta(2)\pi}\prod_{p}\frac{p^4 - 2p^2 - p +1}{(p^2 - 1)^2}\prod_{p\mid r}\left( 1+ \frac{p^2}{p^4 - 2p^2 - p + 1}\right) \\
   & + O\left( \left(PXY\right)^\epsilon\left((YP^2 X)^{\frac{3}{5}} + \frac{Y^2 P}{\sqrt{X}}  + r\sqrt{X}Y^{\frac{3}{2}} + PXY^{\frac{5}{18}} + PY^{\frac{8}{9}}\sqrt{X}\right)\right).
\end{split}   
\end{equation*}
\end{proposition}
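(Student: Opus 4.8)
\textbf{Proof proposal for Proposition~\ref{our version of Nina P3.3}.}
The plan is to start from the truncated main term already isolated before the statement, namely
\[
\frac{\sqrt{4P^2X-r^2X^2}}{\pi}\sum_{d<2P}\sum_{n\le T}\frac{S_{n,d,r}}{nd},
\]
and evaluate it by separating the character sum $S_{n,d,r}$ into a ``square'' part (where $n=m^2$, giving a principal-type character that produces the main term) and a ``non-square'' part (which only contributes to the error). First I would fix $r$ and $d$ and note that $N\in\mathcal{A}_{r,d}$ is a congruence condition modulo $d^2$ supported on a residue set $\mathcal{R}_{r,d}$ of size $0,1$, or $2$ determined in Section~\ref{Section: Remainder}; combined with the square-free condition $\mu^2(N)=1$ this lets me write $S_{m^2,d,r}$ as a sum over $N$ in an arithmetic progression of $\mu^2(N)\left(\frac{N(r^2N-4P^2)}{d^2}\right)$ up to the harmless quadratic factor. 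For $n=m^2$ the Jacobi symbol $\left(\frac{\cdot}{m^2}\right)$ collapses to the principal character modulo $m$ (or $2m$/$4m$ after combining with the mod-$4$ characters coming from $-d\equiv0,1\pmod4$), so the inner sum over $N$ is an average of $\mu^2$ over a progression, evaluated by \cite[Lemma~6.7]{Zub23} as $\frac{Y}{\zeta(2)}$ times a local density, with error $O_\epsilon(m^{1/5+\epsilon}X^{3/5+\epsilon})$ (and an extra factor accounting for the modulus $d^2$).

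Next I would assemble the main term: summing the local densities over all $m$ coprime to the relevant primes and over all $d$ with residue set nonempty produces an Euler product. The bookkeeping is the same as in \cite[Proposition~3.3]{Zub23} except that the extra structure of $\mathcal{R}_{r,d}$ (in particular the ``$2$'' appearing when $r$ is even, $(r,d)=2$, $4\mid d$) modifies the local factors. I expect the $d$-sum, after collecting the $p$-adic contributions, to yield exactly
\[
\prod_p\frac{p^4-2p^2-p+1}{(p^2-1)^2}\prod_{p\mid r}\left(1+\frac{p^2}{p^4-2p^2-p+1}\right),
\]
matching $B$ and $C(r)$ from the introduction; the cleanest way to verify this is to compute the Euler factor at each prime $p$ separately, distinguishing $p\mid r$ from $p\nmid r$ and handling $p=2$ by hand using the case analysis of Section~\ref{Section: Remainder}. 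Multiplying by $\frac{Y\sqrt{4P^2X-r^2X^2}}{\zeta(2)\pi}$ gives the stated main term.

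For the error terms I would collect four sources: (i) the approximation $\sqrt{4P^2N-r^2N^2}\approx\sqrt{4P^2X-r^2X^2}$, already giving $O\big(Y^2PX^{-1/2+\epsilon}+rX^{1/2+\epsilon}Y^{3/2+\epsilon}\big)$; (ii) the truncation of $L(1,\chi)$ at $T$, giving $O\big(YXP^2\log(P^2X)/T\big)$ after summing $\sum_{d<2P}d^{-2}\ll1$ over the $\ll Y/d^2$ values of $N$; (iii) the non-square terms, bounded by $\sum_{d<2P}d^{-1}\sum_{n\le T}n^{-1}\cdot O_\epsilon(n^{1/5+\epsilon}X^{3/5+\epsilon})\ll_\epsilon T^{1/5+\epsilon}X^{3/5+\epsilon}\log P$ times $\sqrt{4P^2X}$, i.e. $O_\epsilon\big(PX^{1/2}\cdot T^{1/5+\epsilon}X^{3/5+\epsilon}\big)$; and (iv) the square-term error $\sum_{d<2P}d^{-1}\sum_{m\le\sqrt{T}}m^{-2}\cdot O_\epsilon(m^{1/5+\epsilon}X^{3/5+\epsilon})\ll_\epsilon X^{3/5+\epsilon}\log P$ times $\sqrt{4P^2X}$, plus the $Y^2/X$ tails. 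The final step is to optimize $T$: balancing the truncation error $PX^{1/2}\cdot YXP^2/T$ against the non-square error $PX^{1/2}\cdot T^{1/5}X^{3/5}$ gives a choice of $T$ a power of $YX P^2$ (of the shape $T\asymp (YXP^2)^{5/6}X^{-1/2}$ up to $\epsilon$'s), which converts these into the terms $(YP^2X)^{3/5+\epsilon}$, $PXY^{5/18+\epsilon}$, $PY^{8/9+\epsilon}\sqrt X$ displayed in the statement; the remaining listed errors come from (i). \textbf{The main obstacle} I anticipate is the $p=2$ local computation and, relatedly, getting the residue-set counts $|\mathcal{R}_{r,d}|$ to feed correctly into the Euler product so that the $r$-dependent factor comes out as exactly $C(r)$ rather than some variant — this is precisely where the $P^2$ case could in principle diverge from Zubrilina's $P$ case, and it must be checked that it does not. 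A secondary technical point is justifying the interchange of the $d$-sum with the limit (the Euler product over $d$ converges, but uniformity in $r$ and $P$ in the tail $d$ near $2P$ needs a short argument).
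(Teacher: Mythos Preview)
Your proposal contains a genuine gap in step (iii). The sum $S_{n,d,r}$ involves the product $\left(\frac{N}{n}\right)\left(\frac{(r^2N-4P^2)/d^2}{n}\right)$, and the second factor is a Jacobi symbol evaluated at a \emph{linear shift} of $N$, not at $N$ itself. Consequently the map $N\mapsto \left(\frac{N}{n}\right)\left(\frac{(r^2N-4P^2)/d^2}{n}\right)$ is periodic in $N$ but is \emph{not} a Dirichlet character of $N$, so \cite[Lemma~6.7]{Zub23} (a bound for $\sum \mu^2(N)\chi(N)$ with $\chi$ a genuine Dirichlet character) does not apply to the non-square part of your split. The square/non-square decomposition worked in Section~\ref{Section: Main Term Calculation 1} only because there the inner character really was $\left(\frac{-P^2N}{n}\right)$, which \emph{is} a Dirichlet character in $N$; here the extra shifted factor destroys that structure. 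Relatedly, your choice $T\asymp (YXP^2)^{5/6}X^{-1/2}$ and the resulting exponents are artifacts of an error term that does not actually arise.

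The paper's route is different from yours. It first writes $S_{n,d,r}$ as a sum over residues $b\pmod{fnd^2}$ weighted by the square-free count in each class (Hooley's estimate), and shows the resulting complete character sum factors as $f\widetilde{\varphi}_{r,d}(g)\theta_r(n')$ with $g=(d^\infty,n)$, $n'=n/g$ (Lemma~\ref{our version of Nina L3.8}); the functions $\theta_r,\widetilde{\varphi}_{r,d}$ are computed explicitly in Lemma~\ref{lem:theta-phi}, which is what delivers the Euler product $B\cdot C(r)$ and also handles the $p=2$ case you flagged as the main obstacle. The decomposition is then by \emph{size} of $n$ and $d$, not by squareness: small $d$, small $n$ gives the main term via Lemma~\ref{our version of Nina L6.6}; small $d$, large $n$ is bounded using the Weil-type estimate \cite[Lemma~3.12]{Zub23}, which is designed precisely for the shifted product of Jacobi symbols and yields the $Y^{5/18}$ and $Y^{8/9}$ exponents; large $d$ is trivially bounded. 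The final optimization is $T=(YP^2X)^{2/5}$, balancing the $L$-series truncation error $YXP^2/T$ against $P\sqrt{XY}\,T^{1/4}$ and producing the $(YP^2X)^{3/5}$ term.
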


\begin{proof}
In Proposition~\ref{our version of Nina P3.5} we show that
\begin{equation*}
\begin{split}
\sum_{d \leq 2P} \sum_{n\leq T} \frac{S_{n,d,r}}{nd} &= \frac{Y}{\zeta(2)}\prod_{p}\frac{p^4 - 2p^2 - p +1}{(p^2 - 1)^2}\prod_{p\mid r}\left( 1+ \frac{p^2}{p^4 - 2p^2 - p + 1}\right) \\
& + O\left(\sqrt{X}Y^{\frac{5}{18}} + \sqrt{Y}T^{\frac{1}{4}+\epsilon} + (P X)^\epsilon Y^{\frac{8}{9}} \right).
\end{split}
\end{equation*}
The simplified expression of \eqref{to-be.truncated} combined with the aforementioned proposition implies that
\begin{equation*}
\begin{split}
   \sum_{\substack{N\in [X, X+Y] \\ P\nmid N \\ N \text{ sq-free}}} H_1(r^2 N^2 - 4P^2 N) &= \frac{Y\sqrt{4P^2 X - r^2X^2}}{\zeta(2)\pi}\prod_{p}\frac{p^4 - 2p^2 - p +1}{(p^2 - 1)^2}\prod_{p\mid r}\left( 1+ \frac{p^2}{p^4 - 2p^2 - p + 1}\right) \\
   &+  O\left(\frac{YP^2X\log(P^2 X)}{T}+\frac{Y^2P^{1+\epsilon}}{X^{\frac{1}{2}-\epsilon}}+rX^{\frac{1}{2}+\epsilon}Y^{\frac{3}{2}}P^{\epsilon}\right)\\
   &+ O\left(PXY^{\frac{5}{18}} + P\sqrt{X}\sqrt{Y}T^{\frac{1}{4}+\epsilon} + P^{1+\epsilon} X^{\frac{1}{2}+\epsilon} Y^{\frac{8}{9}} \right).
\end{split}   
\end{equation*}
Now, by choosing $T = (YP^2X)^{\frac25}$ gives an error term of
\begin{align*}
& \ O\left((YP^2X)^{\frac35}\log(P^2X)+\frac{Y^2P^{1+\epsilon}}{X^{\frac{1}{2}-\epsilon}}+rX^{\frac{1}{2}+\epsilon}Y^{\frac{3}{2}}P^{\epsilon}+PXY^{\frac{5}{18}} + (YP^2X)^{\frac35+\epsilon} + P^{1+\epsilon} X^{\frac{1}{2}+\epsilon} Y^{\frac{8}{9}}\right)\\
= & \ O\left(Y^{\frac{3}{5}+\epsilon}P^{\frac65 +\epsilon} X^{\frac35 +\epsilon} +\frac{Y^2P^{1+\epsilon}}{X^{\frac{1}{2}-\epsilon}}+rX^{\frac{1}{2}+\epsilon}Y^{\frac{3}{2}}P^{\epsilon}+PXY^{\frac{5}{18}} +  P^{1+\epsilon} X^{\frac{1}{2}+\epsilon} Y^{\frac{8}{9}}\right) \\
= & \ O\left((YPX)^\epsilon\left((YP^2 X)^{\frac{3}{5}} +\frac{Y^2P}{\sqrt{X}} + r\sqrt{X}Y^{\frac{3}{2}} + PXY^{\frac{5}{18}} +  P^{1} X^{\frac{1}{2}} Y^{\frac{8}{9}}\right)\right). \qedhere
\end{align*}
\end{proof}

\begin{corollary}
\label{our version of Nina P3.2}
Let $P\neq 2$ be a prime, let $r$ be a positive integer, and let $X>Y>0$ be such that $4P^2 > r^2(X+Y)$ for each $r<\frac{2P}{\sqrt{X}}$.
Then
{\allowdisplaybreaks
\begin{equation*}
\begin{split}
\frac{\zeta(2) \pi}{YX}  \sum_{r< \frac{2P}{\sqrt{X}}} \sum_{\substack{N\in [X, X+Y] \\ P\nmid N \\ N \text{ sq-free}}}  H_1(r^2 N^2 - 4P^2 N) = \sum_{r\le \frac{P}{\sqrt{X}}} \left(\sqrt{\frac{4P^2}{X} - r^2}\right)\prod_{p}\frac{p^4 - 2p^2 - p +1}{(p^2 - 1)^2}\prod_{p\mid r}\left( 1+ \frac{p^2}{p^4 - 2p^2 - p + 1}\right) & \\
 + O\left((PX)^\epsilon\left(\frac{P^{\frac{11}{5}}}{Y^{\frac25}X^{\frac{9}{10}}}+\frac{YP^2}{X^2}+\frac{P^2Y^{\frac12}}{X^{\frac32}}+\frac{P^2}{X^{\frac12}Y^{\frac{13}{18}}}+\frac{P^2}{XY^{\frac19}}\right)\right). &
\end{split}   
\end{equation*}
}
\end{corollary}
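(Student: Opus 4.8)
The plan is to deduce Corollary~\ref{our version of Nina P3.2} from Proposition~\ref{our version of Nina P3.3} by summing the asymptotic formula over the relevant range of $r$. Concretely, I would start from the identity
\[
\frac{\zeta(2)\pi}{YX}\sum_{r<\frac{2P}{\sqrt X}}\sum_{\substack{N\in[X,X+Y]\\ P\nmid N\\ N\text{ sq-free}}}H_1(r^2N^2-4P^2N)
=\frac{\zeta(2)\pi}{YX}\sum_{r<\frac{2P}{\sqrt X}}\Bigl(\text{main term}(r)+\text{error}(r)\Bigr),
\]
where main term$(r)$ and error$(r)$ are exactly the two lines furnished by Proposition~\ref{our version of Nina P3.3}. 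For each fixed $r$ in the range one must of course check the hypothesis $4P^2>r^2(X+Y)$, which holds by the assumption of the corollary; note also that for $r$ near the upper limit $2P/\sqrt X$ the quantity $4P^2X-r^2X^2$ can be as small as $O(P\sqrt X\cdot Y)$ or smaller, but since it only ever enters through its square root times $1/(YX)$, and since the number of such boundary $r$ is $O(1)$, these terms are harmless; in fact the main sum is naturally supported on $r\le P/\sqrt X$ because $\sqrt{4P^2/X-r^2}$ is only real there, matching the range on the right-hand side.

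The first step is the main-term bookkeeping: the leading term of Proposition~\ref{our version of Nina P3.3} is
\[
\frac{Y\sqrt{4P^2X-r^2X^2}}{\zeta(2)\pi}\prod_p\frac{p^4-2p^2-p+1}{(p^2-1)^2}\prod_{p\mid r}\Bigl(1+\frac{p^2}{p^4-2p^2-p+1}\Bigr),
\]
and multiplying by $\frac{\zeta(2)\pi}{YX}$ turns $\frac{Y}{\zeta(2)\pi}\sqrt{4P^2X-r^2X^2}$ into $\frac{1}{X}\sqrt{4P^2X-r^2X^2}=\sqrt{4P^2/X-r^2}$, which is precisely the summand on the right-hand side of the corollary. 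Summing over $r$ then reproduces the stated main term verbatim; no approximation is needed here since Proposition~\ref{our version of Nina P3.3} already gives an exact closed form for the main term of each individual $r$.

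The second step, and the one I expect to be the only real work, is to add up the $O$-terms. Writing $R:=2P/\sqrt X$ for the number of values of $r$, I would bound $\sum_{r<R}(\cdots)$ termwise: the $r$-independent contributions $(YP^2X)^{3/5}$, $Y^2P/\sqrt X$, $PXY^{5/18}$, $PY^{8/9}\sqrt X$ each get multiplied by $R\asymp P/\sqrt X$, while the term $r\sqrt X Y^{3/2}$ contributes $\sum_{r<R}r\ll R^2\asymp P^2/X$ times $\sqrt X Y^{3/2}$; then I multiply the total by $\frac{\zeta(2)\pi}{YX}$. So, up to the $(PXY)^\epsilon$ absorbing the $\log$'s and the $\epsilon$'s from Siegel's bound and from Lemma~\cite[6.7]{Zub23}, the error becomes
\[
O\!\left((PX)^\epsilon\left(\frac{P}{\sqrt X}\cdot\frac{(YP^2X)^{3/5}}{YX}+\frac{P}{\sqrt X}\cdot\frac{Y^2P}{\sqrt X\,YX}+\frac{P^2}{X}\cdot\frac{\sqrt X Y^{3/2}}{YX}+\frac{P}{\sqrt X}\cdot\frac{PXY^{5/18}}{YX}+\frac{P}{\sqrt X}\cdot\frac{PY^{8/9}\sqrt X}{YX}\right)\right),
\]
and the plan is simply to verify that each of these five pieces simplifies to the corresponding entry $\frac{P^{11/5}}{Y^{2/5}X^{9/10}}$, $\frac{YP^2}{X^2}$, $\frac{P^2Y^{1/2}}{X^{3/2}}$, $\frac{P^2}{X^{1/2}Y^{13/18}}$, $\frac{P^2}{XY^{1/9}}$ listed in the corollary — which is a routine matter of collecting exponents of $P$, $X$, $Y$. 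The one subtlety worth flagging is that the boundary values of $r$ (where $4P^2X-r^2X^2$ is small) must be checked not to inflate any error term; since the error terms of Proposition~\ref{our version of Nina P3.3} do not contain the factor $\sqrt{4P^2X-r^2X^2}$ at all, they are uniform in $r$ and this causes no trouble. Thus the whole argument is: invoke Proposition~\ref{our version of Nina P3.3} for each admissible $r$, sum the exact main terms, sum the error terms using $\#\{r<R\}\asymp R$ and $\sum_{r<R}r\asymp R^2$ with $R\asymp P/\sqrt X$, and divide by $YX/(\zeta(2)\pi)$.
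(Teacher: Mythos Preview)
Your proposal is correct and follows essentially the same approach as the paper: invoke Proposition~\ref{our version of Nina P3.3} for each $r$, multiply by $\zeta(2)\pi/(YX)$ so that the main term becomes $\sqrt{4P^2/X-r^2}$ times the Euler product, and then sum the error terms over $r<2P/\sqrt X$ using $\#\{r\}\asymp P/\sqrt X$ for the $r$-independent pieces and $\sum_r r\asymp P^2/X$ for the $r\sqrt X\,Y^{3/2}$ piece, which yields exactly the five displayed error terms. One small slip: your remark that $\sqrt{4P^2/X-r^2}$ is ``only real'' for $r\le P/\sqrt X$ is off by a factor of $2$ (it is real up to $r\le 2P/\sqrt X$), but this does not affect the argument since the paper's proof simply observes that the main term has the right shape and then handles the errors exactly as you describe.
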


\begin{proof}
Rearranging the terms in Proposition~\ref{our version of Nina P3.3} gives

{\allowdisplaybreaks
\begin{align*}
\frac{\zeta(2)\pi}{XY}  \sum_{r< \frac{2P}{\sqrt{X}}}\sum_{\substack{N\in [X, X+Y] \\ P\nmid N \\ N \text{ sq-free}}}  H_1(r^2 N^2 - 4P^2 N) = \prod_p\frac{p^4 - 2p^2 - p +1}{(p^2 - 1)^2} \sum_{r < \frac{2P}{\sqrt{X}}}\left(\sqrt{\frac{4P^2}{X} - r^2}\right) \prod_{p\mid r}\left( 1+ \frac{p^2}{p^4 - 2p^2 - p + 1}\right) &\\
+ \sum_{r < \frac{2P}{\sqrt{X}}} O\left( \left(PXY\right)^\epsilon \left(\frac{(YP^2 X)^{\frac{3}{5}}}{XY} + \frac{Y^2 P}{XY\sqrt{X}}  + r\frac{\sqrt{X}Y^{\frac{3}{2}}}{XY} + \frac{PXY^{\frac{5}{18}}}{XY} + \frac{PY^{\frac{8}{9}}\sqrt{X}}{XY}\right)\right).&
\end{align*}
}

Since the main term has the right shape, we will only focus on the error term.

{\allowdisplaybreaks
\begin{align*}
    & \ \sum_{r < \frac{2P}{\sqrt{X}}} O\left( \left(PXY\right)^\epsilon \left(\frac{(YP^2 X)^{\frac{3}{5}}}{XY} + \frac{Y^2 P}{XY\sqrt{X}}  + r\frac{\sqrt{X}Y^{\frac{3}{2}}}{XY} + \frac{PXY^{\frac{5}{18}}}{XY} + \frac{PY^{\frac{8}{9}}\sqrt{X}}{XY}\right)\right)\\
    = & \ \sum_{r < \frac{2P}{\sqrt{X}}} O\left( \left( PXY\right)^\epsilon \left( \frac{P^{\frac{6}{5}}}{(XY)^{\frac25}} + \frac{PY}{X^{\frac32}}  + r\frac{\sqrt{Y}}{\sqrt{X}} + \frac{P}{Y^{\frac{13}{18}}} + \frac{P}{\sqrt{X}Y^{\frac19}}\right)\right)\\
    = & \  O\left( \left( PXY\right)^\epsilon \left( \frac{P^{1+\frac{6}{5}}}{\sqrt{X}(XY)^{\frac25}} + \frac{P^2Y}{X^2}  +  \frac{P^2}{\sqrt{X}Y^{\frac{13}{18}}} + \frac{P^2}{XY^{\frac19}} + \sum_{r < \frac{2P}{\sqrt{X}}} r\frac{\sqrt{Y}}{\sqrt{X}} \right)\right) \\
    = & \  O\left( \left( YPX\right)^\epsilon \left( \frac{P^{\frac{11}{5}}}{X^{\frac{9}{10}}Y^{\frac25}} + \frac{P^2Y}{X^2}  +   \frac{P^2 \sqrt{Y}}{X^{\frac32}} + \frac{P^2}{\sqrt{X}Y^{\frac{13}{18}}} + \frac{P^2}{XY^{\frac19}} \right)\right). \qedhere
\end{align*}
}
\end{proof}

Comparing the conditions from each of the error terms, we see that the most restrictive one arises from the last term.
It yields the relation 
$\frac{\delta_2}{2}+ \frac{\delta}{9} < \frac19$.
So a strict condition would be to choose\footnote{The condition that $\delta_2 < \delta$ will arise naturally from the calculations in \S\ref{section: the remaining terms}} $\delta_2 < \delta < \frac{2}{11}$.
A more relaxed bound on $\delta$ is mentioned in the statement of the main theorem.

The main task is now to estimate $S_{n,d,r}$.

\subsection{Some Preliminary Calculations}
Let $a$ be an integer such that $a\pmod{d^2}\in \mathcal{R}_{r,d}$. 
The character $\frac{(r^2x-4P^2)/d^2}{n}$ is a character modulo $fnd^2$ where $f=4$ if $n$ is even and $f=1$ otherwise.
Thus, we have
\[
\sum_{\substack{N\in [X,X+Y]\\ P\nmid N\\N\equiv a\pmod {d^2}}}\mu^2(N)\left(\frac{N}{n}\right)\left(\frac{(r^2 N-4P^2)/d^2}{n}\right) = \sum_{\substack{b \pmod {fd^2n}\\ a\equiv b \pmod{d^2}}}\left(\frac{b}{n}\right)\left(\frac{(r^2b-4P^2)/d^2}{n}\right)\sum_{\substack{N\in [X,X+Y]\\ \gcd(P,N)=1\\ N\equiv b\pmod{fnd^2}}}\mu^2(N).
\]

Note that $P^2>\frac{X}{4}$ and throughout our calculations we also have that $P>\frac{d}{2}$.
If $\gcd(P,fnd^2)>1$, then $P\mid n$ and $\left(\frac{N}{n}\right)=0$ for all $N$ satisfying $P\mid N$.
If $\gcd(P,fnd^2)=1$, then the number of elements in $[X,X+Y]$ that are divisible by $P$ and congruent to $b\pmod{fnd^2}$ are $O\left(\frac{X}{fd^2nP}\right)$.
Thus, deleting the condition $P\nmid N$ in the inner sum gives
\[
\sum_{\substack{b \pmod {fd^2n}\\a\equiv b \pmod{d^2}}}\left(\frac{b}{n}\right)\left(\frac{(r^2b-4P^2)/d^2}{n}\right) \left( \sum_{\substack{N\in [X,X+Y]\\ N\equiv b\pmod{fnd^2}}}\mu^2(N) + O\left(\frac{X}{fd^2nP}\right)\right).
\]
Now using a result of C.~Hooley (see also \cite[Lemma 6.4]{Zub23}) the above expression can be rewritten as
\[
\sum_{\substack{b \pmod {fd^2n}\\a\equiv b \pmod{d^2}}}\left(\frac{b}{n}\right)\left(\frac{(r^2b-4P^2)/d^2}{n}\right) \left( \frac{Y\eta(d^2n)}{\zeta(2)f\varphi(d^2n)} + O\left( \frac{\sqrt{X}}{d\sqrt{n}} + d^{1+\epsilon}n^{\frac{1}{2}+\epsilon}\right) + O\left(\frac{X}{fd^2nP}\right)\right)
\]
which can be simplified to
\[
\frac{Y\eta(d^2n)}{\zeta(2)f\varphi(d^2n)} \sum_{\substack{b \pmod {fd^2n}\\a\equiv b \pmod{d^2}}}\left(\frac{b}{n}\right)\left(\frac{(r^2b-4P^2)/d^2}{n}\right)+O\left(\sqrt{Xn}/d+d^{1+\epsilon}n^{\frac{3}{2}+\epsilon}+\frac{X}{Pd^2}\right).
\]

\begin{definition}
Define the functions 
\begin{align*}
    \theta_r(m) & =\sum_{a\pmod{m}}\left(\frac{a}{m}\right)\left(\frac{ar^2-4P^2}{m}\right)\\ \widetilde{\varphi}_{r,d}(g) & =\sum_{\substack{a \pmod{d^2g}\\ a\pmod{d^2}\in \mathcal{R}_{r,d}}}\left(\frac{a}{g}\right)\left(\frac{ar^2-4P^2}{g}\right).
\end{align*}
\end{definition}

\begin{lemma}
\label{our version of Nina L3.8}
Let $g=(d^\infty, n)$ and set $n'=n/g$.
Then
\[
\sum_{\substack{b \pmod {fd^2n}\\b \pmod{d^2}\in \mathcal{R}_{r,d}}} \left( \frac{b}{n} \right) \left(\frac{(r^2b-4P^2)/d^2}{n} \right) = f\widetilde{\varphi}_{r,d}(g)\theta_r(n').
\]
\end{lemma}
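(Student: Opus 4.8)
The plan is to prove this identity by the Chinese Remainder Theorem together with the complete multiplicativity of the Kronecker symbol in its lower argument; it is the analogue in our setting of \cite[Lemma~3.8]{Zub23}. Write $n=gn'$ with $g=(d^\infty,n)$ supported on the primes dividing $d$ and $n'=n/g$; then $\gcd(n',d)=1$ and $\gcd(n',g)=1$. Since $\left(\frac{\cdot}{gn'}\right)=\left(\frac{\cdot}{g}\right)\left(\frac{\cdot}{n'}\right)$, the summand $\left(\frac{b}{n}\right)\left(\frac{(r^2b-4P^2)/d^2}{n}\right)$ factors as the product of its ``$g$-part'' $\left(\frac{b}{g}\right)\left(\frac{(r^2b-4P^2)/d^2}{g}\right)$ and its ``$n'$-part'' $\left(\frac{b}{n'}\right)\left(\frac{(r^2b-4P^2)/d^2}{n'}\right)$; splitting the index $b\bmod fd^2n$ by CRT into a $g$-block and an $n'$-block then turns the sum into a product of a sum over each block.

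For the $n'$-block the key move is to clear the denominator $d^2$: since $\gcd(d,n')=1$ the residue $d^2$ is invertible modulo $n'$, and by multiplicativity in the top argument $\left(\frac{(r^2b-4P^2)/d^2}{n'}\right)=\left(\frac{r^2b-4P^2}{n'}\right)\left(\frac{d^2}{n'}\right)^{-1}=\left(\frac{r^2b-4P^2}{n'}\right)$, because $\left(\frac{d^2}{n'}\right)=\left(\frac{d}{n'}\right)^2=1$. The resulting factor $\left(\frac{b}{n'}\right)\left(\frac{r^2b-4P^2}{n'}\right)$ is, up to the conductor-$2$ bookkeeping noted below, a function of $b\bmod n'$, and summing it over a full residue system modulo $n'$ yields $\sum_{a\bmod n'}\left(\frac{a}{n'}\right)\left(\frac{ar^2-4P^2}{n'}\right)=\theta_r(n')$. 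On the complementary block the side-condition $b\bmod d^2\in\mathcal R_{r,d}$ — which involves only this block, since $d^2$ divides its modulus — guarantees that $(r^2b-4P^2)/d^2$ is a genuine integer, and the sum over this block is, by the very definition of $\widetilde\varphi_{r,d}$, equal to $\widetilde\varphi_{r,d}(g)$ up to the overall factor $f$; this factor appears because the modulus $fd^2n$ exceeds the period $d^2n$ of the summand by exactly $f\in\{1,4\}$. Multiplying the two contributions gives the claimed $f\,\widetilde\varphi_{r,d}(g)\,\theta_r(n')$.

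The step that requires the most care — and essentially the only obstacle — is the bookkeeping at the prime $2$. The Kronecker symbol $\left(\frac{\cdot}{m}\right)$ is a Dirichlet character modulo $m$ only for $m\equiv1\pmod4$, and otherwise acquires a conductor-$4$ correction (with genuinely $8$-periodic behaviour coming from $\left(\frac{\cdot}{2}\right)$), so one must run the argument through cases dictated by the parities of $n$ and of $d$: in particular, when $2\mid n$ but $2\nmid d$ the whole $2$-part of $n$ lies in $n'$ rather than in $g$, and one isolates the $2$-component by hand. In each case one verifies that $f$ is precisely the discrepancy between $fd^2n$ and the true period of the summand, that the gcd conditions needed for the CRT splitting hold, and that the reduction ``$b\bmod d^2$'' of the side-condition is compatible with the factorization $n=gn'$ (immediate, as $d^2\mid fd^2g$ and $\gcd(d,n')=1$). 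Everything else — multiplicativity of the Kronecker symbols and the relabelling of the summation variable in each block — is routine.
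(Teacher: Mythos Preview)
Your approach is essentially the same as the paper's: split $n=gn'$, factor the Kronecker symbols accordingly, apply CRT to decompose the sum over $b\pmod{fd^2n}$ into a ``$g$-block'' (carrying the congruence condition $b\bmod d^2\in\mathcal R_{r,d}$) and an ``$n'$-block'' (where $d^2$ is invertible so the denominator disappears), and then identify the two blocks with $\widetilde\varphi_{r,d}$ and $\theta_r$ respectively. Your clearing of $d^2$ in the $n'$-block via $\left(\frac{d^2}{n'}\right)=1$ is exactly right, and you correctly flag the $2$-adic bookkeeping as the only genuine obstacle.

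The one place your write-up is a bit glib is the origin of the factor $f$. You describe it as the ratio of the summation modulus $fd^2n$ to the ``true period'' $d^2n$ of the summand; but when $n$ is even the Kronecker symbol $\left(\frac{\cdot}{n}\right)$ need not be $n$-periodic, so this heuristic does not literally hold. The paper's resolution is precisely the parity case split you allude to: when $f=4$ and $d$ is even (so $f\mid d^2$), the CRT blocks are $n'$ and $fd^2g$, the $g$-block evaluates to $\widetilde\varphi_{r,d}(fg)$, and one then uses $\widetilde\varphi_{r,d}(fg)=f\,\widetilde\varphi_{r,d}(g)$ (since $g$ is even here); when $f=4$ and $d$ is odd, the $2$-power sits entirely in $n'$, the blocks are $fn'$ and $d^2g$, the $n'$-block evaluates to $\theta_r(fn')$, and one uses $\theta_r(fn')=f\,\theta_r(n')$ (since $n'$ is even). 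So the factor $f$ does not arise uniformly as an overcount but rather attaches to whichever block carries the prime $2$; once you make this explicit your argument is complete and matches the paper's.
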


\begin{proof}
Assume first that $f\mid d^2$ (i.e either $f=1$ or $d$ is even).
Then $(d^\infty,fn)=fg$ and moreover, $2\mid g$ if $f=4$.
Here, we recall that $f=4$ precisely when $n$ is even.

\begin{align*}
& \sum_{\substack{b \pmod {fd^2n}\\b \pmod{d^2}\in \mathcal{R}_{r,d}}}\left(\frac{b}{n}\right)\left(\frac{(r^2b-4P^2)/d^2}{n}\right)=\sum_{a\in \mathcal{R}_{r,d}}\hspace{1mm} \sum_{\substack{b \pmod {fd^2n}\\ a\equiv b \pmod{d^2}}}\left(\frac{b}{n}\right)\left(\frac{(r^2b-4P^2)/d^2}{n}\right)\\
&=\left(\sum_{b \pmod{n'}}\left(\frac{b}{n'}\right)\left(\frac{(r^2b-4P^2)/d^2}{n'}\right)\right)\left(\sum_{a\in \mathcal{R}_{r,d}}\sum_{\substack{b\pmod{d^2gf}\\b\equiv a\pmod{d^2}}}\left(\frac{b}{gf}\right)\left(\frac{(r^2b-4P^2)/d^2}{gf}\right)\right)\\
&=\theta_r(n')\widetilde{\varphi}_{r,d}(fg).
\end{align*}
If $f=1$, this is the desired result. If $f=4$, then $g$ and $d$ are even and $\widetilde{\varphi}_{r,d}(fg)=f\widetilde{\varphi}_{r,d}(g)$. 

It remains to consider the case that $n$ is even and $d$ is odd. 
\begin{align*}
 & \sum_{\substack{b \pmod {fd^2n}\\b \pmod{d^2}\in \mathcal{R}_{r,d}}}\left(\frac{b}{n}\right)\left(\frac{(r^2b-4P^2)/d^2}{n}\right)=\sum_{a\in \mathcal{R}_{r,d}}\hspace{1mm} \sum_{\substack{b \pmod {fd^2n}\\a\equiv b \pmod{d^2}}}\left(\frac{b}{n}\right)\left(\frac{(r^2b-4P^2)/d^2}{n}\right)\\&=\left(\sum_{b \pmod{fn'}}\left(\frac{b}{fn'}\right)\left(\frac{(r^2b-4P^2)/d^2}{fn'}\right)\right)\left(\sum_{a\in \mathcal{R}_{r,d}}\sum_{\substack{b\pmod{d^2g}\\b\equiv a\pmod{d^2}}}\left(\frac{b}{g}\right)\left(\frac{(r^2b-4P^2)/d^2}{g}\right)\right)\\&=\widetilde{\varphi}_{r,d}(g)\theta_r(fn').\end{align*}
   As $n'$ and $f$ are even in this case, we obtain $\theta_r(fn')=f\theta_r(n')$ finishing the proof. 
\end{proof}

\begin{lemma}
\label{lem:theta-phi}
Let $(r,d)$ be an admissible pair.
Then 
\[
\widetilde{\varphi}_{r,d}(g)=
\begin{cases}
    \varphi(g)\delta_{g=\square} \quad & 2\nmid g, \ 4\nmid d\\
    2\varphi(g)\delta_{g=\square} \quad &2\nmid g, \ 4\mid d\\
     2\varphi(g)\delta_{g=\square} \quad & 2\mid g, \ 4\mid r, \ \gcd(d,r)=2\\
      2\varphi(g)\delta_{g=\square} \quad & 2\mid g, \ 4\mid d, \  \gcd(d,r)=2\\
      0\quad & \textup{else}.
    \end{cases}\]
Let $p$ be a prime coprime to $P$.
The function $\theta_r$ is a multiplicative function satisfying
\[
\theta_r(p^\alpha)=
\begin{cases} -p^{\alpha-1}\quad & 2\neq p, \ 2\nmid \alpha, \ p\nmid r\\
p^{\alpha-1}(p-2)\quad & 2\neq p, \ 2\mid \alpha, \ p\nmid r\\
0\quad & p\mid r, \ 2\neq p, \ 2\nmid \alpha \textup{ or } p=2, \ 2\mid r\\
p^{\alpha-1}(p-1)\quad & p\mid r, \ p\neq 2, \ 2\mid \alpha\\
(-1)^\alpha2^{\alpha-1} \quad & p=2, \ 2\nmid r.
\end{cases}
\]
\end{lemma}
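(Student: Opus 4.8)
The plan is to establish the two claimed formulas by elementary but careful character-sum bookkeeping, exploiting multiplicativity wherever possible. I would treat $\widetilde\varphi_{r,d}$ and $\theta_r$ separately.

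\textbf{Computation of $\widetilde\varphi_{r,d}(g)$.} First I would recall that, by the description of $\mathcal{R}_{r,d}$ worked out in Section~\ref{Section: Remainder}, the residues $a \pmod{d^2}$ lying in $\mathcal{R}_{r,d}$ are exactly those for which $r^2 a \equiv 4P^2 \pmod{d^2}$ (in the relevant cases there are $|\mathcal{R}_{r,d}| \in \{0,1,2\}$ of them). For such an $a$, the quantity $(r^2 a - 4P^2)/d^2$ is a fixed integer, so in the inner sum $\sum_{b \equiv a \,(d^2)} \left(\frac{b}{g}\right)\left(\frac{(r^2 b - 4P^2)/d^2}{g}\right)$ one substitutes $b = a + d^2 t$, $t$ running over $\Z/g\Z$, and the argument of the second Kronecker symbol becomes an affine function of $t$. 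Since $\gcd(d,g)$ is controlled ($g \mid d^\infty$ would be bad, but here $g = (d^\infty, n)$ forces $\gcd(g, n/g)=1$ and one checks $\gcd(d^2, g)$ issues do not obstruct), the map $t \mapsto (b(t), (r^2 b(t) - 4P^2)/d^2(t))$ lets one recognize the sum as a Jacobi-type sum that collapses. The key arithmetic fact is that $\sum_{t \,(g)} \left(\frac{\alpha + \beta t}{g}\right)\left(\frac{\gamma + \delta t}{g}\right)$ equals $\varphi(g)$ when the two linear forms are projectively equal mod $g$ (which happens, after clearing units, precisely when $g$ is a perfect square, accounting for the $\delta_{g=\square}$) and is $0$ otherwise; the factor $2$ and the vanishing cases then come from multiplying by $|\mathcal{R}_{r,d}|$ and from the parity constraints on $d$, $r$ listed in the statement. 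I would organize this as: (i) reduce to a single $a \in \mathcal{R}_{r,d}$ and a sum over $t \pmod g$; (ii) prove the projective-equality $\Leftrightarrow g = \square$ dichotomy via multiplicativity in $g$ and the prime-power case; (iii) multiply by $|\mathcal{R}_{r,d}|$ and read off the five cases.

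\textbf{Computation of $\theta_r(p^\alpha)$.} Multiplicativity of $\theta_r$ in its argument follows from CRT applied to $\left(\frac{a}{m}\right)$ and $\left(\frac{ar^2-4P^2}{m}\right)$ simultaneously, so it suffices to evaluate $\theta_r(p^\alpha) = \sum_{a \,(p^\alpha)} \left(\frac{a}{p^\alpha}\right)\left(\frac{ar^2 - 4P^2}{p^\alpha}\right)$. For odd $p \nmid r$, substitute $a \mapsto r^{-2} a$ (a unit change of variable) to turn the sum into $\sum_a \left(\frac{a}{p^\alpha}\right)\left(\frac{a - 4P^2 r^{-2}}{p^\alpha}\right)$, a shifted character sum of the type $\sum_a \left(\frac{a(a-c)}{p^\alpha}\right)$ with $c$ a $p$-adic unit (since $p \nmid P$); splitting $a$ by its $p$-adic valuation and using the standard evaluation of $\sum_{u \in (\Z/p^\alpha)^\times}\left(\frac{u(u-c)}{p^\alpha}\right)$ (which for $\alpha$ odd is $-p^{\alpha-1}$-ish by reduction to the $\alpha=1$ Jacobsthal/Jacobi sum giving $-1$, and for $\alpha$ even is $p^{\alpha-1}(p-2)$ by a direct count, the ``$-2$'' counting the two forbidden residues $a \equiv 0$ and $a \equiv c$) yields the first two cases. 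For $p \mid r$, $p$ odd, the second symbol becomes $\left(\frac{-4P^2}{p^\alpha}\right) = \left(\frac{-1}{p^\alpha}\right)$ (since $4P^2$ is a unit square mod $p$), so $\theta_r(p^\alpha) = \left(\frac{-1}{p^\alpha}\right)\sum_a \left(\frac{a}{p^\alpha}\right)$, which is $0$ when $\alpha$ is odd (odd-power Kronecker symbol sums to $0$ over a complete residue system) and $\varphi(p^\alpha) = p^{\alpha-1}(p-1)$ when $\alpha$ is even — matching cases three and four. For $p = 2$ I would handle the two subcases by hand: if $2 \mid r$ then, as in the remainder-term analysis, one shows the relevant quadratic form is never $\equiv 0,1 \pmod 4$ so the sum vanishes; if $2 \nmid r$, a direct evaluation over $\Z/2^\alpha\Z$ using $\left(\frac{\cdot}{2}\right)$ being the character mod $8$ gives $(-1)^\alpha 2^{\alpha-1}$.

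\textbf{Main obstacle.} The routine parts are the odd-$p$ shifted character sums and the multiplicativity reductions. The delicate point I expect to consume the most care is the $p = 2$ analysis — both for $\theta_r$ and for the even-$g$, even-$d$ branches of $\widetilde\varphi_{r,d}$ — because there the Kronecker symbol $\left(\frac{\cdot}{2}\right)$ has conductor $8$, the change-of-variable tricks that work for odd primes are unavailable, and one must interlock the computation with the $\pmod 4$ solvability conditions from Section~\ref{Section: Remainder} (the $f = 1$ versus $f = 4$ dichotomy). I would therefore do the odd-prime cases cleanly and uniformly first, then treat $p = 2$ as a separate, explicit finite computation, cross-checking against the case list for $|\mathcal{R}_{r,d}|$ to make sure the parity hypotheses ($2 \nmid g$ vs.\ $2 \mid g$, $4 \mid d$, $4 \mid r$, $\gcd(d,r)=2$) line up exactly with the stated values.
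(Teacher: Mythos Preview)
Your plan is correct and is in fact more detailed than the paper's own proof, which consists entirely of the remark that Zubrilina's Lemmas~6.2 and~6.3 in \cite{Zub23} carry over once one replaces $P$ by $P^2$. What you outline---CRT multiplicativity for $\theta_r$, the standard Jacobi/Jacobsthal evaluation of $\sum_{a\,(p^\alpha)}\bigl(\tfrac{a(a-c)}{p^\alpha}\bigr)$ for odd $p$, the collapse to $\varphi(p^\alpha)\delta_{2\mid\alpha}$ when $p\mid r$, and a separate by-hand treatment at $p=2$ interlocked with the $|\mathcal{R}_{r,d}|$ casework---is exactly how those cited lemmas are proved, so substantively the approaches coincide. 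Two small corrections to your write-up: first, in the definition of $\widetilde\varphi_{r,d}(g)$ the second Kronecker symbol is $\bigl(\tfrac{ar^2-4P^2}{g}\bigr)$, not $\bigl(\tfrac{(ar^2-4P^2)/d^2}{g}\bigr)$, so your substitution $b=a+d^2t$ should be fed into that expression (the $d^2$ in the denominator only appears in the ambient sum $S_{n,d,r}$, and is absorbed when passing to $\theta_r(n')$ because $\gcd(d,n')=1$); second, the description ``$a\in\mathcal{R}_{r,d}$ iff $r^2a\equiv 4P^2\pmod{d^2}$'' is accurate only in the odd-$d$ cases---for even $d$ the conditions from Section~\ref{Section: Remainder} are the more elaborate congruences you will need when handling the $2\mid g$ branches. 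Neither point changes your strategy; your identification of the $p=2$ bookkeeping as the main obstacle is exactly right.
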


\begin{proof}
The definition of the functions $\theta$ and $\varphi$ differ from the ones given in \cite{Zub23}.
Nevertheless, the proofs of Lemmas 6.2 and 6.3 in \emph{loc. cit.} still apply with minimal changes (substitute $P$ by $P^2$). 
\end{proof}

In nutshell, the calculation we have done so far yields
\begin{align*}
S_{n,d,r} & := \sum_{\substack{N\in [X,X+Y]\\ N\in \mathcal{A}_{r,d}}}\mu^2(N)\left(\frac{N}{n}\right)\left(\frac{(r^2N-4P^2)/d^2}{n}\right)\\
& = \frac{Y\eta(d^2n)}{\zeta(2)\varphi(d^2n)} \widetilde{\varphi}_{r,d}(g)\theta_r(n') + O\left(\sqrt{Xn}/d+d^{1+\epsilon}n^{\frac{3}{2}+\epsilon}+\frac{X}{Pd^2}\right).
\end{align*}
This is an approximation of $S_{n,d,r}$ in terms of multiplicative functions which are easier to manipulate.
In the remainder of this section we will prove the following result.

\begin{proposition}
\label{our version of Nina P3.5}
Let $d$ and $r$ be positive integers and let $X>Y>0$.
Let $P$ be a prime satisfying $4P^2 > r^2(X+Y)$.
Then for a cut-off parameter $T\gg Y$
\begin{equation*}
\begin{split}
\sum_{d \leq 2P} \sum_{n\leq T} \frac{S_{n,d,r}}{nd} &= \frac{Y}{\zeta(2)}\prod_{p}\frac{p^4 - 2p^2 - p +1}{(p^2 - 1)^2}\prod_{p\mid r}\left( 1+ \frac{p^2}{p^4 - 2p^2 - p + 1}\right) \\
& + O\left(\sqrt{X}Y^{\frac{5}{18}} + \sqrt{Y}T^{\frac{1}{4}+\epsilon} + (P X)^\epsilon Y^{\frac{8}{9}} \right).
\end{split}
\end{equation*}
\end{proposition}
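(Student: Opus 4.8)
The plan is to substitute the pointwise approximation of $S_{n,d,r}$ in terms of $\eta$, $\varphi$, $\widetilde{\varphi}_{r,d}$ and $\theta_r$ derived just above the statement into the double sum $\sum_{d\le 2P}\sum_{n\le T}\frac{S_{n,d,r}}{nd}$, and to treat the resulting main term and error term separately. Throughout, $g=(d^{\infty},n)$ and $n'=n/g$, so $n'$ is coprime to $d$ and the multiplicative functions split: $\eta(d^2n)=\eta(d^2g)\eta(n')$ and $\varphi(d^2n)=\varphi(d^2g)\varphi(n')$, while $\widetilde{\varphi}_{r,d}$ is multiplicative in its argument.

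For the main term I would interchange the order of summation and use multiplicativity to write it as
\[
\frac{Y}{\zeta(2)}\sum_{d\le 2P}\frac{1}{d}\sum_{g\mid d^{\infty}}\frac{\eta(d^2g)\,\widetilde{\varphi}_{r,d}(g)}{g\,\varphi(d^2g)}\sum_{\substack{n'\le T/g\\ (n',d)=1}}\frac{\eta(n')\,\theta_r(n')}{n'\,\varphi(n')}.
\]
By Lemma~\ref{lem:theta-phi}, $\widetilde{\varphi}_{r,d}(g)$ vanishes unless $g$ is a perfect square, with explicit value $\varphi(g)$ or $2\varphi(g)$ according to the parities of $g$ and $d$ and to $\gcd(d,r)$; hence the $g$-sum is an absolutely convergent series supported on squares built from primes dividing $d$, and by the explicit formula for $\theta_r$ the inner $n'$-sum converges absolutely. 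I would then extend the $n'$-sum, the $g$-sum, and finally the $d$-sum to infinity, bound the tails left by these three truncations, and re-associate the resulting triple product into a single Euler product over all primes $p$, separating the local factors according to whether $p\mid r$, $p\mid d$, or $p=2$. The local-factor computation is the same as in \cite{Zub23}: the substitution $P\mapsto P^{2}$ does not change any local factor at $p\nmid P$, and the primes $p\mid P$ contribute nothing since $\gcd(P,N)=1$; so the product collapses to the constant $\prod_p\frac{p^4-2p^2-p+1}{(p^2-1)^2}\prod_{p\mid r}\bigl(1+\frac{p^2}{p^4-2p^2-p+1}\bigr)$ displayed in the statement, giving the claimed main term.

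For the error term, the three pieces of the $O$-term in the approximation for $S_{n,d,r}$ must be summed against $\frac{1}{nd}$ over $d\le 2P$ and $n\le T$. The piece $\frac{X}{Pd^2}$ sums to $O\bigl(\frac{X\log T}{P}\bigr)=O(\sqrt X\log T)$ because $P\gg\sqrt X$, and is absorbed into the other terms. The remaining two pieces grow too fast to be summed trivially over the whole range $n\le T$, so I would split off an auxiliary cutoff $M$: for $n\le M$ I apply the pointwise bound, obtaining a contribution of the shape $\sqrt X\,M^{1/2}+\cdots$; for $M<n\le T$ I instead exploit cancellation by regarding the two Jacobi symbols as characters in $N$ and invoking the short-interval character-sum estimates of \cite[Lemma~6.7]{Zub23} (exactly as in the Lemmas of Section~\ref{Section: Main Term Calculation 1}), which produces the term $\sqrt Y\,T^{1/4+\epsilon}$ together with a tail in $M$. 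Optimising $M$ against $X$, $Y$ and $T$, and combining with the tails left over from the main-term extension, yields the exponents $Y^{5/18}$ and $Y^{8/9}$, hence the stated error $\sqrt X\,Y^{5/18}+\sqrt Y\,T^{1/4+\epsilon}+(PX)^{\epsilon}Y^{8/9}$.

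The step I expect to be the main obstacle is precisely this error bookkeeping: there are several competing contributions --- Hooley's error for squarefree integers in arithmetic progressions, the cost of removing the condition $P\nmid N$, the tails of the three infinite-series extensions in the main term, and the large-$n$ character-sum estimate --- the truncations occur in the two variables $n$ and $d$, and one must check that after the optimal choice of $M$ everything collapses to exactly the three claimed error terms. A secondary, more routine obstacle is verifying that the Euler product genuinely reproduces the constant of \cite{Zub23}, i.e.\ that the local factors coming from the squarefree weight $\eta$ combine with those of $\theta_r$ and $\widetilde{\varphi}_{r,d}$ in the same way as in \emph{op.\ cit.}
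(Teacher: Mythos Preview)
Your treatment of the main term is essentially the paper's: the pointwise approximation is summed, the $n$- and $d$-sums are extended to infinity, and the Euler product is identified via Lemma~\ref{lem:theta-phi}, exactly as in Lemma~\ref{our version of Nina L6.6} and Proposition~\ref{our analogue of Nina P3.6}.

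The error analysis, however, has a genuine gap in the large-$n$ regime. You propose to handle $M<n\le T$ by ``regarding the two Jacobi symbols as characters in $N$ and invoking \cite[Lemma~6.7]{Zub23}, exactly as in Section~\ref{Section: Main Term Calculation 1}.'' But Lemma~6.7 bounds $\sum_N\mu^2(N)\chi(N)$ for a \emph{single} Dirichlet character $\chi$, whereas the factor $\bigl(\tfrac{(r^2N-4P^2)/d^2}{n}\bigr)$ is merely periodic in $N$ and is not a Dirichlet character; the product of the two symbols is therefore not of the form Lemma~6.7 accepts. Fourier-expanding the second factor into genuine characters costs an extra $\sqrt{n}$ from Gauss sums, which destroys the $T^{1/4}$ exponent you claim. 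The paper instead invokes \cite[Lemma~3.12]{Zub23} (restated before Proposition~\ref{Nina P3.9}), a bound tailored to exactly this product of two Jacobi symbols: it produces a saving of $\prod_{p\in\mathcal{P}_n}\sqrt{p}/2$ where $\mathcal{P}_n$ is the set of odd primes dividing $n$ to odd multiplicity, and it is the subsequent summation over this decomposition $n=x^2y\,2^\alpha P^\beta$ that yields the $\sqrt{Y}\,T^{1/4+\epsilon}$ term. Without this device the large-$n$ contribution is not controlled.

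A second, smaller point: the paper splits in \emph{both} variables, introducing cutoffs $d\le Y^{\tau}$ and $n\le Y^{\sigma}$; the large-$d$ range $Y^{\tau}\ll d\ll P$ is disposed of separately by the trivial bound $|S_{n,d,r}|\le Y/d^2+O(1)$ (see \S\ref{section: large d and error terms}), which contributes the $(PX)^\epsilon Y^{8/9}$ term after setting $\tau=\tfrac{1}{18}$. You allude to ``truncations in the two variables'' but introduce only a single cutoff $M$ in $n$; the exponents $\tfrac{5}{18}$ and $\tfrac{8}{9}$ in the statement arise precisely from the joint optimisation $\sigma=\tfrac{5}{9}$, $\tau=\tfrac{1}{18}$, and a one-parameter optimisation will not recover them.
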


\begin{proof}
The equality is obtained by combining the results obtained in Propositions~\ref{our analogue of Nina P3.6} and \ref{Nina P3.9}, and the calculation in \S\ref{section: large d and error terms}.
\end{proof}

\subsection{Small \texorpdfstring{$d$}{} and small \texorpdfstring{$n$}{}}

In this section, we estimate the sum of $\frac{S_{n,d,r}}{nd}$ in the range $n\leq Y^\sigma$ and $d\leq Y^\tau$ (where $\sigma, \tau$ are parameters in the interval $[0,1]$) by explicitly examining equidistribution of square-free numbers in residue classes modulo $n$.

\begin{lemma}
\label{our version of Nina L6.6}
Let $\sigma, \tau$ be parameters with $0\leq \sigma,\tau \leq 1$.
Let $d,n,r\geq 1$ be integers.
\[
\sum_{\substack{n\le Y^\sigma \\ d\le Y^\tau}}\frac{\eta(d^2n)\tilde{\varphi}_{n,d}(r)\theta_r(n')}{\varphi(d^2n)nd} = \prod_{p}\frac{p^4 - 2p^2 - p +1}{(p^2 - 1)^2}\prod_{p\mid r}\left( 1+ \frac{p^2}{p^4 - 2p^2 - p + 1}\right) + O\left(Y^{-2\tau}+Y^{-\frac{\sigma}{5}}\right).
\]
\end{lemma}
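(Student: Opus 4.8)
The plan is to show that the multi-variable sum factors as an Euler product up to an explicit error, and then identify that Euler product with the stated constant. First I would use the multiplicativity established in Lemma~\ref{lem:theta-phi}: the function $n\mapsto \theta_r(n)$ is multiplicative, $\eta$ is multiplicative, and $d\mapsto \widetilde\varphi_{r,d}(g)$ (with $g=(d^\infty,n)$) has the shape $\varphi(g)\delta_{g=\square}$ times a power of $2$ depending only on the $2$-adic and $p\mid r$ local conditions on $(r,d)$. Writing $n=gn'$ with $g=(d^\infty,n)$ and $n'$ coprime to $d$, the summand $\frac{\eta(d^2n)\widetilde\varphi_{r,d}(g)\theta_r(n')}{\varphi(d^2n)\,nd}$ splits over primes; the $g=\square$ constraint forces the $d$-part to contribute only when $g$ is a perfect square, and after summing the geometric-type series in each local factor one is left with a product over primes $p$ of a local factor that, for $p\nmid r$, should work out to $\frac{p^4-2p^2-p+1}{(p^2-1)^2}$ and for $p\mid r$ acquires the extra factor $1+\frac{p^2}{p^4-2p^2-p+1}$. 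This is exactly the combinatorial heart: one must compute $\sum_{\alpha\geq 0}\theta_r(p^\alpha)/p^\alpha$ against the $\eta/\varphi$ weights and the $\delta_{g=\square}$-truncated sum over powers of $p$ dividing $d$, handling $p=2$ separately because of the $f=4$ phenomenon and the four-way split in Lemma~\ref{lem:theta-phi}.

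Next I would pass from the completed product to the truncated sum. The tail $n>Y^\sigma$ is controlled using $|\theta_r(p^\alpha)|\leq p^{\alpha-1}(p-1)$ and $\eta/\varphi\ll 1$, which gives $\theta_r(n')\ll n'$ up to divisor-function factors, so $\frac{\eta(d^2n)\widetilde\varphi_{r,d}(g)\theta_r(n')}{\varphi(d^2n)nd}$ is essentially $\frac{\delta_{g=\square}}{nd}$ in size; the $g=\square$ condition makes the $d$-sum converge and the $n'$-sum over non-squares contribute a tail of size $O(Y^{-\sigma/5})$ — here the exponent $1/5$ presumably comes from the fact that only near-square $n'$ survive with a non-negligible coefficient (odd exponents give cancellation $-p^{\alpha-1}$, even exponents give $p^{\alpha-1}(p-2)$, so the ``effective'' density of contributing $n'$ up to $X$ is governed by a power saving). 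The tail $d>Y^\tau$ is easier: because $\widetilde\varphi_{r,d}(g)$ vanishes unless $g$ is a square, and $g\mid n$, only $d$ whose square part is bounded by $\sqrt n$ contribute, and summing $\sum_{d>Y^\tau} d^{-2}$-type series yields the $O(Y^{-2\tau})$ term.

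The main obstacle, I expect, is the bookkeeping in the local factor computation at $p=2$ and at primes $p\mid r$, keeping the $\delta_{g=\square}$ constraint synchronized between the $d$-variable and the $n$-variable (since $g$ depends on both). One has to be careful that ``$g=(d^\infty,n)$ is a square'' is not a condition one can impose prime-by-prime independently on $d$ and on $n$ without first reorganizing the double sum — the cleanest route is to fix the squarefull-supported part of $n$ first, let that determine the allowed $p$-adic valuations of $d$, and only then run the Euler product. I would also double-check that the error terms from Lemma~\ref{lem:theta-phi}'s case analysis (the factors of $2$) are absorbed into the claimed $O(Y^{-2\tau}+Y^{-\sigma/5})$ rather than contributing a genuine constant discrepancy, appealing as the authors do to the fact that the analogous Lemmas~6.2, 6.3, 6.6 of \cite{Zub23} go through with $P$ replaced by $P^2$ since $P$ (hence $P^2$) is coprime to every prime in sight.
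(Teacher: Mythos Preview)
Your proposal is correct and follows essentially the same approach as the paper, which simply states that the proof goes through verbatim from \cite[Lemma~6.6]{Zub23} once one has Lemma~\ref{lem:theta-phi} in hand (the point being that the local values of $\theta_r$ and $\widetilde\varphi_{r,d}$ are formally identical to Zubrilina's, with $P^2$ in place of $P$). You have in fact sketched more of the underlying Euler-product computation and tail analysis than the paper itself provides; your identification of the multiplicative factorization, the role of the $\delta_{g=\square}$ constraint, and the separate handling of the $d>Y^\tau$ and $n>Y^\sigma$ tails is the substance of Zubrilina's argument.
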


\begin{proof}
The proof follows as in \cite[Lemma~6.6]{Zub23}.
Note that we can apply the computations in \emph{loc. cit.} because of Lemma~\ref{lem:theta-phi}.
\end{proof}

For the ease of notation, set
\[
Bc(r) := \prod_{p}\frac{p^4 - 2p^2 - p +1}{(p^2 - 1)^2}\prod_{p\mid r}\left( 1+ \frac{p^2}{p^4 - 2p^2 - p + 1}\right).
\]

\begin{proposition}
\label{our analogue of Nina P3.6}
Let $\sigma, \tau$ be parameters with $0\leq \sigma,\tau \leq 1$.
Let $d,n, r\geq 1$ be integers such that $P\nmid n$, $4P^2 > r^2(X+Y)$, and $(r,d)$ is an admissible pair.
Then
\[
\sum_{\substack{n\le Y^\sigma \\ d\le Y^\tau}}\frac{S_{n,d,r}}{nd}=\frac{YBc(r)}{\zeta(2)} + O\left(\sqrt{X}Y^{\frac{\sigma}{2}}+Y^{\tau+\epsilon+\frac{3}{2}\sigma}+Y^{1-2\tau}+Y^{1-\frac{\sigma}{5}}\right).
\]
\end{proposition}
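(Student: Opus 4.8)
The plan is to combine the asymptotic formula for $S_{n,d,r}$ derived above with the evaluation of the associated multiplicative sum in Lemma~\ref{our version of Nina L6.6}. Recall that for each triple $(n,d,r)$ with $P\nmid n$ we have
\[
S_{n,d,r} = \frac{Y\eta(d^2n)}{\zeta(2)\varphi(d^2n)}\widetilde{\varphi}_{r,d}(g)\theta_r(n') + O\!\left(\frac{\sqrt{Xn}}{d}+d^{1+\epsilon}n^{\frac32+\epsilon}+\frac{X}{Pd^2}\right),
\]
where $g=(d^\infty,n)$ and $n'=n/g$. First I would substitute this into $\sum_{n\le Y^\sigma}\sum_{d\le Y^\tau}\frac{S_{n,d,r}}{nd}$ and split the result into a main term and three error contributions coming from the $O(\cdot)$ above.

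For the main term, I would factor out $\frac{Y}{\zeta(2)}$ and recognize the remaining double sum as exactly $\sum_{n\le Y^\sigma,\,d\le Y^\tau}\frac{\eta(d^2n)\widetilde{\varphi}_{r,d}(g)\theta_r(n')}{\varphi(d^2n)nd}$, which Lemma~\ref{our version of Nina L6.6} evaluates as $Bc(r) + O(Y^{-2\tau}+Y^{-\sigma/5})$. Multiplying through by $\frac{Y}{\zeta(2)}$ yields the stated main term $\frac{YBc(r)}{\zeta(2)}$ together with the error contributions $Y^{1-2\tau}$ and $Y^{1-\sigma/5}$. For the three error terms, I would bound them crudely: summing $\frac{\sqrt{Xn}}{d}\cdot\frac{1}{nd}$ over $n\le Y^\sigma$, $d\le Y^\tau$ gives $\ll \sqrt{X}\sum_{n\le Y^\sigma}n^{-1/2}\sum_d d^{-2}\ll \sqrt{X}\,Y^{\sigma/2}$; summing $d^{1+\epsilon}n^{3/2+\epsilon}\cdot\frac{1}{nd}=d^{\epsilon}n^{1/2+\epsilon}$ gives $\ll Y^{\tau+\epsilon}\,Y^{(3/2+\epsilon)\sigma/... }$, more precisely $\ll Y^{\tau+\epsilon}Y^{(3/2)\sigma+\epsilon}$ after summing the $n^{1/2+\epsilon}$ up to $Y^\sigma$ (note $n^{1/2}$ summed to $Y^\sigma$ is $\ll Y^{3\sigma/2}$); and summing $\frac{X}{Pd^2}\cdot\frac{1}{nd}$ over the same ranges gives $\ll \frac{X}{P}\log Y$, which under the hypothesis $P^2\gg X$ (so $X/P\ll P \ll \text{something}$), or rather using $4P^2 > r^2(X+Y)$ hence $P\gg\sqrt{X}$, is $\ll \sqrt{X}\log Y$, absorbable. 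Collecting these gives exactly the four error terms in the statement.

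The main obstacle I anticipate is bookkeeping rather than conceptual: one must verify that the $P\nmid n$ restriction (needed so that the characters are genuinely nonprincipal and the $S_{n,d,r}$ formula applies) does not disturb the evaluation of the multiplicative sum—this is why the hypothesis $P\nmid n$ is imposed and why Lemma~\ref{lem:theta-phi} is invoked to guarantee the Euler product computation of \cite[Lemma~6.6]{Zub23} carries over verbatim. A secondary subtlety is confirming that the term $\frac{X}{Pd^2}$ really is dominated by $\sqrt{X}Y^{\sigma/2}$ (or by one of the listed errors) using $P\gg\sqrt{X}$ from $4P^2>r^2(X+Y)\ge X$; since $\frac{X}{P}\ll\sqrt{X}\le\sqrt{X}Y^{\sigma/2}$, this is immediate. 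Once these points are checked, the proposition follows by assembling the pieces.
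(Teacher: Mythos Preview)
Your proposal is correct and follows essentially the same approach as the paper: substitute the asymptotic for $S_{n,d,r}$, apply Lemma~\ref{our version of Nina L6.6} to the main term, bound the three error sums trivially, and absorb the $\frac{X}{P}\log Y$ contribution using $P\gg\sqrt X$ from $4P^2>r^2(X+Y)$. The paper carries out exactly these steps with the same bookkeeping.
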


\begin{proof}
With notation introduced above, we have
{\allowdisplaybreaks
\begin{align*}
\sum_{\substack{n\le Y^\sigma \\ d\le Y^\tau}}\frac{S_{n,d,r}}{nd} & = \sum_{\substack{n\le Y^\sigma \\ d\le Y^\tau}} \left(\frac{Y\eta(d^2n)\tilde{\varphi}_{r,d}(g)\theta_r(n')}{\zeta(2)\varphi(d^2n)nd}+O\left(\frac{\sqrt{Xn}}{nd^2}+\frac{d^{1+\epsilon}n^{\frac{3}{2}+\epsilon}}{nd}+\frac{X}{Pd^3n}\right)\right)\\
& = \frac{Y}{\zeta(2)} \sum_{\substack{n\le Y^\sigma \\ d\le Y^\tau}} \left(\frac{\eta(d^2n)\tilde{\varphi}_{r,d}(g)\theta_r(n')}{\varphi(d^2n)nd} \right) + \sum_{\substack{n\le Y^\sigma \\ d\le Y^\tau}} O\left(\frac{\sqrt{X}}{d^2\sqrt{n}} + d^{\epsilon}n^{\frac{1}{2}+\epsilon} + \frac{X}{Pd^3n}\right)\\
& = \frac{Y}{\zeta(2)} \left(  Bc(r)+O\left(Y^{-2\tau}+Y^{-\frac{\sigma}{5}}\right) \right) + \sum_{\substack{n\le Y^\sigma \\ d\le Y^\tau}}  O\left(\frac{\sqrt{X}}{d^2\sqrt{n}} + d^{\epsilon}n^{\frac{1}{2}+\epsilon} + \frac{X}{Pd^3n}\right)  \text{ by Lemma~\ref{our version of Nina L6.6}} \\
& = \frac{YBc(r)}{\zeta(2)}+O\left(\sqrt{X}Y^{\frac{\sigma}{2}}+Y^{\tau+\epsilon+\frac{3}{2}\sigma}+Y^{1-2\tau}+Y^{1-\frac{\sigma}{5}}+\frac{X}{P}\log(Y^\sigma)\right).
\end{align*}
}
Recall that $P^2 > \frac{X}{4}$.
Equivalently, we have that $\frac{X}{P} < 2\sqrt{X}$.
The claim follows immediately.
\end{proof}

\subsection{Small \texorpdfstring{$d$}{} and large \texorpdfstring{$n$}{}}
The main result of this section is the following.
The proof of \cite[Proposition~3.9]{Zub23} goes through almost verbatim even in our setting.
We give a sketch of the proof to show that the shape of the error term is the same.

\begin{proposition}
\label{Nina P3.9}
Let $P$ be an odd prime and $d,r$ be positive integers such that $(d,r)$ is an admissible pair.
Let $X>Y>0$ satisfying $(X+Y)^2 < 4P^2$.
For parameters $\sigma, \tau$ between $0$ and $1$, 
\[
\sum_{\substack{n\in [Y^\sigma, T] \\ d< Y^\tau} }\frac{S_{n,d,r}}{nd} \ll \log T \log Y \sqrt{X} + Y^{1-\frac{\sigma}{2}+\epsilon} + \sqrt{Y}T^{\frac{1}{4}+\epsilon}\log Y \text{ as } X\rightarrow\infty.
\]
\end{proposition}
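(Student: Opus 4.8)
The plan is to estimate $\sum_{n \in [Y^\sigma, T],\, d < Y^\tau} S_{n,d,r}/(nd)$ by exploiting the structure of $S_{n,d,r}$ as a character sum in the $N$-variable, and to gain cancellation from summing over $n$ in the long range. First I would fix $d < Y^\tau$ and write
\[
\sum_{Y^\sigma \le n \le T} \frac{S_{n,d,r}}{n} = \sum_{Y^\sigma \le n \le T} \frac{1}{n} \sum_{\substack{N \in [X, X+Y] \\ N \in \mathcal{A}_{r,d}}} \left( \frac{N}{n} \right)\left( \frac{(r^2 N - 4P^2)/d^2}{n} \right).
\]
The congruence condition $N \in \mathcal{A}_{r,d}$ restricts $N$ to $|\mathcal{R}_{r,d}| \le 2$ residue classes mod $d^2$; after fixing such a class and writing $N = a + d^2 M$, one gets an inner sum over $M$ in an interval of length $\approx Y/d^2$ of a (nonprincipal, for $n \ne \square$) character of modulus $O(d^2 n)$ in the $M$-aggregate — exactly the setup of \cite[Lemma~6.7]{Zub23} / the Pólya--Vinogradov-type bounds used throughout Section~3. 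The square values $n = m^2$ are handled separately, as before: there the character $\left(\frac{N}{m^2}\right)$ is principal and the sum over $N \in \mathcal{A}_{r,d}$ is essentially a count of square-free integers in the progression, contributing a main term that is absorbed once one notes the tail $\sum_{m > Y^{\sigma/2}} 1/m^2 \ll Y^{-\sigma/2}$, giving the $Y^{1-\sigma/2+\epsilon}$ term. This is the verbatim analogue of the square/non-square split in \cite[Proposition~3.9]{Zub23}.

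The crux is the non-square contribution. Here I would follow \emph{op.\ cit.}: rather than bounding each $n$ trivially, one swaps the order of summation so that the inner sum is over $n \le T$ of $\frac{1}{n}\left(\frac{N N' (r^2 N - 4P^2)(r^2 N' - 4P^2)}{n}\right)$-type expressions after expanding $|S_{n,d,r}|^2$ (or, in the simpler route actually used by Zubrilina, one keeps the $N$-sum outside and applies the large sieve / a pointwise character-sum bound in $n$). The key input is that for $N \ne N'$ in $[X, X+Y]$, the modulus of the resulting character is controlled, and one obtains square-root cancellation in $n$ up to logarithmic losses; summing the diagonal $N = N'$ contributes the $\log T \log Y \sqrt{X}$ term (the $\sqrt{X}$ coming from $\sqrt{|4P^2 N - r^2 N^2|} \asymp \sqrt{X} \cdot \sqrt{4P^2 - r^2 X}$ normalized appropriately, or more precisely from the count $\sum_{n \le T} 1/\sqrt{n} \cdot (\text{class-number-formula scaling})$). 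The off-diagonal, after the $m^2$-tail is removed, is bounded by $\sqrt{Y}\, T^{1/4+\epsilon} \log Y$ via a Weyl-type estimate for the quartic character sum in $n$; this is precisely \cite[Lemma~6.5]{Zub23} applied with $P$ replaced by $P^2$.

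The only genuine adaptation from \cite{Zub23} is bookkeeping: our modulus involves $4P^2$ in place of $P$, and $\mathcal{R}_{r,d}$ has the slightly different case structure recorded in Section~\ref{Section: Remainder} (with up to two residue classes when $4 \mid d$). Since $|\mathcal{R}_{r,d}| = O(1)$ uniformly and $\gcd(P, n) = 1$ throughout (as $P \mid n$ kills the sum and $P^2 > X/4 > d^2$), neither change affects the exponents; one simply carries the extra factor of $2$ through. I therefore expect \textbf{the main obstacle} to be purely clerical — verifying that the modulus of $\left(\frac{(r^2 N - 4P^2)/d^2}{n}\right)$ as a function of $N$ really is $f n d^2$ with $f \in \{1,4\}$ as claimed (already done in the preliminary calculations above via Lemma~\ref{our version of Nina L3.8} and Lemma~\ref{lem:theta-phi}), so that the character-sum bounds of \cite{Zub23} apply with no loss. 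Once that is in place, the three displayed error terms drop out by choosing the same thresholds as in \cite[Proposition~3.9]{Zub23}.
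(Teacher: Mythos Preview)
Your square/non-square intuition points in the right direction, but the mechanism you propose for the non-square part is not what is done and would not obviously work. The paper does not swap to put the $n$-sum inside, does not expand $|S_{n,d,r}|^2$ into diagonal and off-diagonal, and uses no large-sieve or Weyl-type input. Instead it applies \cite[Lemma~3.12]{Zub23} (restated just before the proof) to bound $S_{n,d,r}$ \emph{pointwise} in terms of the set $\mathcal{P}_n = \{p \mid n : 2 \nmid \val_p(n),\ p \neq 2, P\}$ of primes dividing $n$ to odd order:
\[
S_{n,d,r} \ll \sqrt{X} + \frac{Y}{d^2 \prod_{p \in \mathcal{P}_n} \tfrac{\sqrt{p}}{2}} + \frac{\log Y}{d}\sqrt{\frac{nY}{\prod_{p \in \mathcal{P}_n} \tfrac{\sqrt{p}}{2}}}.
\]
The term $\sqrt{X}\log T \log Y$ then arises simply from summing the first piece $\sqrt{X}$ over $n \le T$ and $d \le Y^\tau$; it has nothing to do with a diagonal. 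The other two pieces are summed over $n$ by writing $n = x^2 y\, 2^\alpha P^\beta$ with $y$ squarefree and coprime to $2P$, so that $\prod_{p \in \mathcal{P}_n} p = y$; one then checks $\sum_{n \ge Y^\sigma} \bigl(n \prod \tfrac{\sqrt{p}}{2}\bigr)^{-1} \ll Y^{-\sigma/2 + \epsilon}$ and $\sum_{n \le T} \bigl(n \prod \tfrac{\sqrt{p}}{2}\bigr)^{-1/2} \ll T^{1/4+\epsilon}$, giving the remaining two terms. (Your observation about perfect squares $n = m^2$ and the tail $\sum_{m > Y^{\sigma/2}} m^{-2}$ is exactly the case $y = 1$ of the first of these sums, so that part of your heuristic is sound.)

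The alternative routes you sketch face real obstacles. With $N$ fixed, the character $\bigl(\tfrac{N(r^2N - 4P^2)/d^2}{\,\cdot\,}\bigr)$ has modulus of size $\asymp XP^2/d^2$, far too large for P\'olya--Vinogradov or Burgess in $n \le T$ to recover the stated bounds; expanding $|S_{n,d,r}|^2$ makes the off-diagonal moduli even larger. Finally, \cite[Lemma~6.5]{Zub23} is the elementary computation of $\sum \eta(m)/m^2$ reproduced in Section~\ref{Sec: Arithmetic Functions} here, not a character-sum estimate; the lemma you actually need is \cite[Lemma~3.12]{Zub23}, which you do not invoke.
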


\begin{lemma}[{\cite[Lemma~3.12]{Zub23}}]
Let $Y\leq X$, let $n,w, D$ be positive integers and $q$ be any integer.
Let $a$ be an integer satisfying $wa \equiv q\pmod{D}$ and $\gcd(a,D)=1$.
Define the set
\[
\mathcal{P} = \left\{ p\mid n \text{ odd } : \ 2\nmid \val_p(n), \ p\nmid \gcd(q,n), \text{ and } \gcd(D,w,p)=1  \right\}.
\]
Then
\[
\sum_{\substack{N\in[X,X+Y] \\ N\equiv{a\pmod{D}}}} \mu^2(N)\left( \frac{N}{n}\right)\left( \frac{\frac{wN-q}{D}}{n}\right) \ll \sqrt{X} + \frac{Y}{D\prod_{p\in \mathcal{P}} \frac{\sqrt{p}}{2}} + \frac{\log Y}{\sqrt{D}}\left( \sqrt{\frac{nY}{\prod_{p\in \mathcal{P}} \frac{\sqrt{p}}{2}}}\right).
\]
\end{lemma}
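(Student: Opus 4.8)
~The final statement to be proven is Lemma~3.12 of \cite{Zub23}, restated here as the last displayed lemma; since the paper explicitly attributes it to \emph{loc. cit.}, the proof proposal is to transport the argument of Zubrilina with the single substitution $P \rightsquigarrow P^2$ and to track that the shape of the error term is unchanged. Concretely, I would proceed as follows.

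\textbf{Reduction to a character sum over squarefree integers.} First I would open up $\mu^2(N)$ by the standard identity $\mu^2(N) = \sum_{e^2 \mid N} \mu(e)$, splitting the sum over $N \in [X, X+Y]$, $N \equiv a \pmod D$ into a sum over $e \le \sqrt{X+Y}$ and a sum over $N = e^2 M$ with $M$ in an interval of length $\approx Y/e^2$ in a residue class modulo $D/\gcd(D,e^2)$ (using $\gcd(a,D)=1$ to control the congruence compatibility). The resulting inner sum is a pure character sum $\sum_M \left(\tfrac{M}{n_1}\right)$ for a suitable modulus $n_1$ depending on $n$, $e$, and the arithmetic of $w, q, D$; the point of the definition of $\mathcal P$ is exactly to record which primes $p \mid n$ survive as genuine (non-square, non-degenerate) conductor contributions after this unfolding — those are the primes with odd valuation in $n$, not dividing $\gcd(q,n)$, and coprime to $\gcd(D,w)$.

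\textbf{Three regimes for $e$.} I would then dyadically/threshold-split the $e$-sum into: (i) $e$ small, where the character sum is nontrivial and one applies the Pólya--Vinogradov / large-sieve-type bound $\sum_M \left(\tfrac{M}{n_1}\right) \ll \sqrt{n_1}\log n_1$, contributing the $\sqrt X$ and the $\frac{\log Y}{\sqrt D}\sqrt{nY/\prod_{p\in\mathcal P}(\sqrt p/2)}$ terms after summing over $e$ and estimating $n_1$; (ii) $e$ in a middle range, where one instead bounds the character sum trivially by its length $\approx Y/(e^2 D')$ but exploits that for $p \in \mathcal P$ the character is non-principal, so a positive proportion of residues cancel — this is where the saving factor $\prod_{p \in \mathcal P} \frac{\sqrt p}{2}$ in the $\frac{Y}{D\prod_{p\in\mathcal P}\sqrt p/2}$ term comes from, via a Jacobi-symbol / Burgess-free elementary cancellation estimate applied prime-by-prime; (iii) $e$ large ($e^2 \gtrsim Y$), where the interval for $M$ is essentially empty or of bounded length and the total contribution is $O(\sqrt X)$ or smaller, subsumed. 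Combining the three regimes and optimizing the threshold in $e$ yields exactly the three stated error terms.

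\textbf{The main obstacle and what changes with $P \rightsquigarrow P^2$.} The main delicate point — as in Zubrilina — is the bookkeeping in regime (ii): one must verify that for primes $p \in \mathcal P$ the twisted character $M \mapsto \left(\tfrac{M}{p}\right)\left(\tfrac{(wM-q)/D}{p}\right)$ is genuinely non-principal modulo $p$ (so that each such $p$ contributes a factor $\le 2/\sqrt p$ in the relevant mean-square or first-moment bound), and that the conditions defining $\mathcal P$ (odd valuation, $p \nmid \gcd(q,n)$, $\gcd(D,w,p)=1$) are precisely what guarantees this. The substitution $P \rightsquigarrow P^2$ enters only through the parameter $q$ (which in the application will be a multiple of $4P^2$ rather than $4P$) and $w = r^2$; since $p \in \mathcal P$ is assumed coprime to the relevant data and $P$ is odd with $P \nmid n$ in our setting, nothing structural changes — the character remains a product of a Legendre symbol in $M$ and a Legendre symbol in an affine function of $M$, which is non-principal exactly under the stated hypotheses. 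Hence the proof of \cite[Lemma~3.12]{Zub23} applies verbatim, and I would simply cite it, remarking that the dependence on $P$ is only through the shape of $q$ and that the error-term exponents are insensitive to replacing $P$ by $P^2$.
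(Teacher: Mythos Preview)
The paper does not give its own proof of this lemma: it is stated purely as a citation of \cite[Lemma~3.12]{Zub23} and used as a black box in the proof of Proposition~\ref{Nina P3.9}. Your bottom-line proposal---``the proof of \cite[Lemma~3.12]{Zub23} applies verbatim, and I would simply cite it''---is exactly what the paper does, so you are in agreement.

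One minor confusion worth flagging: the lemma as stated is already fully general (parameters $n, w, D, q, a$) and does not mention $P$ at all, so there is no substitution $P \rightsquigarrow P^2$ to perform at the level of the \emph{lemma}. That substitution enters only when the lemma is \emph{applied}, namely in the proof of Proposition~\ref{Nina P3.9}, where one specializes $w = r^2$, $D = d^2$, $q = 4P^2$; the paper notes there that this yields the same set $\mathcal{P}_n$ as in \cite[Corollary~3.13]{Zub23}, whence the rest of Zubrilina's argument goes through unchanged. Your sketch of the three-regime argument for the $e$-sum is a plausible outline of how Zubrilina's proof proceeds, but since the paper does not reproduce it, there is nothing further to compare.
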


\begin{proof}[Proof of Proposition~\ref{Nina P3.9}]
Choose $w=r^2$, $D=d^2$, $q=4P^2$ in the above lemma, and choose $a$ to be an integer such that $a\pmod{d^2}\in \mathcal{R}_{r,d}$.
If $Y\leq X$, $(d,r)$ is an admissible pair of positive integers, $n$ is any positive integer, and $P$ is a prime satisfying $4P^2 > r^2(X+Y)$ then the set
\[
\mathcal{P} = \left\{ p\mid n \ : \ 2\nmid \val_p(n), \ p\neq 2,P \right\}.
\]
This is precisely the set $\mathcal{P}_n$ in \cite[Corollary~3.13]{Zub23}.
Thus, the calculations in \cite[Proof of Proposition~3.9]{Zub23} go through without change.
We explain some of the calculations in more detail than in \cite{Zub23}
{\allowdisplaybreaks
\begin{align*}
S_{n,d,r} & = \sum_{\substack{N\in [X,X+Y]\\ P\nmid N \\ N\equiv a\pmod{d^2}}}\mu^2(N)\left(\frac{N}{n}\right)\left(\frac{(r^2N-4P^2)/d^2}{n}\right)\\
&\ll \sqrt{X} + \frac{Y}{d^2 \prod_{p\in \mathcal{P}} \frac{\sqrt{p}}{2}} + \frac{\log Y}{d}\left( \sqrt{\frac{nY}{\prod_{p\in \mathcal{P}} \frac{\sqrt{p}}{2}}}\right).
\end{align*}
}
Hence,

{\allowdisplaybreaks
\begin{align*}
\sum_{\substack{n\in [Y^\sigma,T] \\ d< Y^\tau}} \frac{S_{n,d,r}}{nd} &\ll \sum_{\substack{n\in [Y^\sigma,T] \\ d< Y^\tau}} \left( \frac{\sqrt{X}}{nd} + \frac{Y}{nd^3 \prod_{p\in \mathcal{P}} \frac{\sqrt{p}}{2}} + \frac{\log Y}{nd^2}\left( \sqrt{\frac{nY}{\prod_{p\in \mathcal{P}} \frac{\sqrt{p}}{2}}}\right)\right) \\
& = \sqrt{X} \sum_{\substack{n\in [Y^\sigma,T] \\ d< Y^\tau}} \frac{1}{nd} + Y\sum_{\substack{n\in [Y^\sigma,T] \\ d< Y^\tau}} \frac{1}{nd^3 \prod_{p\in \mathcal{P}} \frac{\sqrt{p}}{2}} + {\sqrt{Y}\log Y} \sum_{\substack{n\in [Y^\sigma,T] \\ d< Y^\tau}}\frac{1}{d^2 \sqrt{n \prod_{p\in \mathcal{P}} \frac{\sqrt{p}}{2}}} \\
&\ll \log T \log Y \sqrt{X} + Y\sum_{n\in [Y^\sigma,T]} \frac{1}{n \prod_{p\in \mathcal{P}} \frac{\sqrt{p}}{2}} + \sqrt{Y}{\log Y} \sum_{n\in [Y^\sigma,T]} \frac{1}{\sqrt{n \prod_{p\in \mathcal{P}} \frac{\sqrt{p}}{2}}}\\
&\ll \log T \log Y \sqrt{X} + Y\sum_{n \geq Y^\sigma} \frac{1}{n \prod_{p\in \mathcal{P}} \frac{\sqrt{p}}{2}} + \sqrt{Y}{\log Y} \sum_{n\leq T} \frac{1}{\sqrt{n \prod_{p\in \mathcal{P}} \frac{\sqrt{p}}{2}}}.
\end{align*}
}

\noindent We write $n=x^2 y 2^\alpha P^\beta$, where $y$ is square-free and $\gcd(x,2P)=\gcd(y,2P)=1$. 
Therefore $\mathcal{P} = \{ p \ : \ p\mid y\}$.
We now focus on the second term of the expression

{\allowdisplaybreaks
\begin{align*}
\sum_{n\geq Y^\sigma} \frac{1}{n \prod_{p\in \mathcal{P}} \frac{\sqrt{p}}{2}} & = 
\sum_{n\geq Y^\sigma} \frac{n}{n^2 \prod_{p\in \mathcal{P}} \frac{\sqrt{p}}{2}}\\
&\leq \sum_{n\geq Y^\sigma} \frac{x^2 y^{\frac{1}{2}+\epsilon} 2^\alpha P^\beta}{n^2} = \sum_{n\geq Y^\sigma} \frac{x^2 y^{\frac{1}{2}+\epsilon} 2^\alpha P^\beta}{x^4 y^2 2^{2\alpha} P^{2\beta}}\\
&\ll\sum_{\alpha,\beta} \sum_{x\in \mathbb{N}} \frac{1}{2^{\alpha} P^\beta x^2} \sum_{y> \frac{Y^\sigma}{x^2}}y^{\frac{-3}{2} + \epsilon} = \sum_{\alpha,\beta} \sum_{x\leq Y^{\frac{\sigma}{2}}} \frac{1}{2^{\alpha} P^\beta x^2}\sum_{y> Y^{\sigma/2}}y^{\frac{-3}{2} + \epsilon} + \sum_{\alpha,\beta}\sum_{x> Y^{\frac{\sigma}{2}}} \frac{1}{2^{\alpha} P^\beta x^2}\\
&\ll \sum_{x\leq Y^{\frac{\sigma}{2}}} \frac{1}{x^2} \frac{1}{y^{\frac{\sigma}{2} -\epsilon}} + \sum_{x> Y^{\frac{\sigma}{2}}} \frac{1}{x^2}\\
&\ll Y^{\frac{-\sigma}{2} + \epsilon}
\end{align*}
}

Next, we focus on the third term.

{\allowdisplaybreaks
\begin{align*}
\sum_{n\leq T} \frac{1}{\sqrt{n \prod_{p\in \mathcal{P}} \frac{\sqrt{p}}{2}}} & = \sum_{n\leq T} \frac{1}{n} \sqrt{\frac{n}{\prod_{p\in \mathcal{P}} \frac{\sqrt{p}}{2}}}\\
& \ll \sum_{\substack{\alpha,\beta: \\ 2^\alpha P^\beta \leq T}} \sum_{x\leq \sqrt{\frac{T}{2^\alpha P^\beta}}} \sum_{y \leq \frac{T}{2^\alpha P^\beta x^2}} \frac{1}{{2^\alpha P^\beta x^2 y}} \times \frac{2^{\frac{\alpha}{2}} P^{\frac{\beta}{2}} x y^{\frac{1}{2}+\epsilon}}{y^{\frac{1}{4}}} \\
& \ll \sum_{\substack{\alpha,\beta: \\ 2^\alpha P^\beta \leq T}} \sum_{x\leq \sqrt{\frac{T}{2^\alpha P^\beta}}} \sum_{y \leq \frac{T}{2^\alpha P^\beta x^2}} \frac{1}{{2^{\frac{\alpha}{2}} P^{\frac{\beta}{2}} x y^{\frac{3}{4} - \epsilon}}}\\
&\ll \sum_{x\leq \sqrt{T}} \frac{1}{x} \left( \frac{T}{x^2}\right)^{\frac{1}{4}+\epsilon} = \sum_{x\leq \sqrt{T}} \frac{T^{\frac{1}{4}+\epsilon}}{x^{\frac{3}{2} - \epsilon}} \\
&\ll T^{\frac{1}{4} - \epsilon}
\end{align*}}

Putting this together we get that as $X\rightarrow \infty$,
\[
\sum_{\substack{n\in [Y^\sigma,T] \\ d< Y^\tau}} \frac{S_{n,d,r}}{nd} \ll \log T \log Y \sqrt{X} + Y^{1-\frac{\sigma}{2}+\epsilon} + \sqrt{Y}T^{\frac{1}{4}+\epsilon}\log Y.
\]
\end{proof}

\subsection{Large \texorpdfstring{$d$}{} and error terms}
\label{section: large d and error terms}

The first main task of this section is to estimate the sum $ \frac{S_{n,d,r}}{nd}$ over all $n$ when $d \gg Y^\tau$.
We will observe that this also contributes (only) to the error term.

For $Y^\tau \ll d \ll P$, we have that

{\allowdisplaybreaks
\begin{align*}
\sum_{\substack{n\\ Y^\tau \ll d \ll P}} \frac{S_{n,d,r}}{nd} &= \sum_{Y^\tau \ll d \ll P}\frac{1}{d}\sum_n \frac{1}{n} \sum_{\substack{N\in [X,X+Y]\\ P\nmid N \\ N\equiv a\pmod{d^2}}}\mu^2(N)\left(\frac{N}{n}\right)\left(\frac{(r^2N-4P^2)/d^2}{n}\right)\\
&\leq  \sum_{Y^\tau \ll d \ll P}\frac{1}{d}\sum_n \frac{1}{n} \sum_{\substack{N\in [X,X+Y]\\ P\nmid N \\ N\equiv a\pmod{d^2}}}\left(\frac{N}{n}\right)\left(\frac{(r^2N-4P^2)/d^2}{n}\right) \text{ because } 0 \leq \mu^2(N) \leq 1.   \\
&\ll \sum_{Y^\tau \ll d \ll P} \frac{1}{d} \left(\frac{Y}{d^2} + 1 \right)\frac{(PX)^\epsilon}{d^{\epsilon}} \\
&\ll (P X)^\epsilon \left( Y^{1-2\tau} + \log P\right).
\end{align*}
}
We obtain the second  line by the trivial inequality  $\mu(N)^2\left(\frac{N}{n}\right)\left(\frac{(r^2N-4P^2)/d^2}{n}\right)\le 1 $.
The term $\left(\frac{Y}{d^2}+1\right)$ comes from the sum over $N$.

We now collect all the error terms.
The cumulative error terms from Propositions~\ref{our analogue of Nina P3.6} and \ref{Nina P3.9}, and the above calculations is
\begin{equation*}
\begin{split}
\left(\sqrt{X}Y^{\frac{\sigma}{2}}+Y^{\tau+\epsilon+\frac{3}{2}\sigma}+Y^{1-2\tau}+Y^{1-\frac{\sigma}{5}}\right) & + \left( \log T \log Y \sqrt{X} + Y^{1-\frac{\sigma}{2}+\epsilon} + \sqrt{Y}T^{\frac{1}{4}+\epsilon}\log Y \right)\\
& + (P X)^\epsilon \left( Y^{1-2\tau} + \log P\right)
\end{split}
\end{equation*}

Throughout this section we are assuming that $P^2 \ll X^{1+\delta_2}$ where $\delta_2$ is chosen as in the statement of the main theorem.
In particular, we may always choose $\delta_2 < \frac{2}{11}$.
Therefore, we may assume that $P^\epsilon \ll \sqrt{X}$ and choose the cut-off parameter $T\gg Y$, we can rewrite the cumulative error term as 
\[
\left(\sqrt{X}Y^{\frac{\sigma}{2}}+Y^{\tau+\epsilon+\frac{3}{2}\sigma}+Y^{1-2\tau}+Y^{1-\frac{\sigma}{5}}\right) + \left( \log T \log Y \sqrt{X} + Y^{1-\frac{\sigma}{2}+\epsilon} + \sqrt{Y}T^{\frac{1}{4}+\epsilon} \right) + (P X)^\epsilon Y^{1-2\tau}.
\]

Choosing $\tau = \frac{1}{18}$, $\sigma=\frac{5}{9}$ and assuming that $\log T \ll Y$, the above expression is bounded by
{\allowdisplaybreaks
\begin{align*}
\left(\sqrt{X}Y^{\frac{5}{18}}+Y^{\frac{8}{9}+\epsilon}+2Y^{\frac{8}{9}}\right) + \left( \log T \log Y \sqrt{X} + Y^{\frac{13}{18}+\epsilon} + \sqrt{Y}T^{\frac{1}{4}+\epsilon} \right) + (P X)^\epsilon Y^{\frac{8}{9}} &\\
\ll \sqrt{X}Y^{\frac{5}{18}} + \sqrt{Y}T^{\frac{1}{4}+\epsilon} + (P X)^\epsilon Y^{\frac{8}{9}}.
\end{align*}}

\section{The Remaining Terms}
\label{section: the remaining terms}
To complete the proof of the main theorem, we are still left to bound the following terms
\begin{itemize}
\item those in the range of $r$ not covered in Corollary~\ref{our version of Nina P3.2}
\item those involving levels $N$ when $P\mid N$
\item those involving the $P$ term in the trace formula.
\end{itemize}

First, we evaluate

{\allowdisplaybreaks
\begin{align*}
\frac{\zeta(2) \pi}{YX}   \sum_{\substack{N\in [X, X+Y] \\ P\nmid N \\ N \text{ sq-free}}} \sum_{ \frac{2P}{\sqrt{X+Y}} \leq r\leq  \frac{2P}{\sqrt{N}}} H_1(r^2 N^2 - 4P^2 N) & \ll \frac{Y}{XY} \left( P \left( \frac{1}{\sqrt{X}} - \frac{1}{\sqrt{X+Y}}\right)(P^2 X)^{\frac{1}{2} + \epsilon}\right) \\
& \ll \frac{1}{X}\left( \frac{P^{2+\epsilon}Y^{\frac12} X^{\frac12 + \epsilon}}{X} \right)\\
&\ll \frac{X^{1+\delta_2}X^{1-\frac{\delta}{2}}}{X^{2-\epsilon}} \\
& \ll X^{\delta_2 - \frac{\delta}{2} + \epsilon}.
\end{align*}}

This follows from the assumptions that $Y=(1+o(1))X^{1-\delta}$, 
that $P^2\ll X^{1+\delta_2}$, 
and that $P^\epsilon \ll \sqrt{X}$.
Since the main term is $\frac{P}{\sqrt{X}}$ which is of size $X^{\frac{\delta_2}{2}}$, it follows that we require $\delta_2<\delta$.

When $P\mid N$, we have noted before that
\[
\lambda_F(P) = \lambda_f(P^2) = \frac{1}{P}.
\]
In other words, at the ramified primes $P$, we notice that using the trivial bound

{\allowdisplaybreaks
\begin{align*}
\frac{\zeta(2) \pi}{XY}\sum_{\substack{N\in [X, X+Y] \\ P\mid N \\ N \text{ sq-free}}} \sum_{f\in H^{\textup{new}(N,2)}}P\lambda_f(P^2)\epsilon(f) &\leq \frac{\zeta(2) \pi}{XY} X \sum_{\substack{N\in [X, X+Y] \\ P\mid N \\ }} \abs{\epsilon(f)} \\
& \leq \frac{\zeta(2) \pi}{Y}\sum_{\substack{N\in [X, X+Y] \\ P\mid N }} 1 \\ 
&\leq  \zeta(2)\pi \frac{\#\{N \ : \ P\mid N \text{ and } N\in [X, X+Y]\}}{Y}  \\
&= O\left(\frac{\frac{Y}{P}+1}{Y}\right)=O\left(\frac{1}{P} + \frac{1}{Y}\right).
\end{align*}}


Finally, the note that
{\allowdisplaybreaks\begin{align*}
   \frac{\zeta(2)\pi}{XY} \sum_{\substack{N \in [X, X+Y] \\ N \text{ sq-free}} }(P+P^2) & = \frac{\zeta(2)\pi }{XY} (P+P^2)  \sum_{\substack{N \in [X, X+Y] \\ N \text{ sq-free}} } 1\\
    &=  \frac{\zeta(2)\pi }{XY} (P+P^2) \left(\sum_{N\leq X+Y} \mu^2 (N) - \sum_{N\leq X}\mu^2(N)\right)\\
    &= \frac{\zeta(2)\pi}{XY} (P+P^2) \left( \frac{Y}{\zeta(2)} + O(\sqrt{X+Y} - \sqrt{X})\right)\\
    &= \frac{\pi P^2}{X} + O\left( \frac{P}{X} + \frac{P^2}{XY^{\frac12}}\right).  
\end{align*}
}
\section {Arithmetic functions}
\label{Sec: Arithmetic Functions}

The purpose of this section is to record technical results required in the calculations performed above.

\begin{lemma}
\label{eta}
Let $P$ be an odd prime and $K$ be a cut-off parameter.
Define 
\[
A:=\prod_p\left(1+\frac{p}{(p+1)^2(p-1)}\right) \quad \textrm{ and } \quad \eta(m) := \prod_{p\mid m} \frac{p}{p+1}
\]
Then

\begin{align}
\label{eq: 5.1}
\sum_{\substack{m=1}}^K\frac{\eta(m)}{m^2} & = \prod_{p}\left(1+\frac{p}{(p+1)^2(p-1)}\right)+O\left(\frac{1}{K}\right)= A+O\left(\frac{1}{K}\right).\\
\label{eq: 5.2}
\sum_{\substack{m=1\\ \gcd(m,2)=1}}^K\frac{\eta(m)}{m^2} & = \prod_{p\neq 2}\left(1+\frac{p}{(p+1)^2(p-1)}\right)+O\left(\frac{1}{K}\right)=\frac{9}{11} A + O\left(\frac{1}{K}\right).\\
\label{eq: 5.3}
\sum_{\substack{m=1}}^K\frac{\eta(2m)}{m^2} & = \frac{8}{11}A+O\left(\frac{1}{K}\right).\\
\label{eq: 5.4}
\sum_{\substack{m=1\\ \gcd(m,2P)=1}}^K\frac{\eta(m)}{m^2} & = \prod_{p\neq 2,P}\left(1+\frac{p}{(p+1)^2(p-1)}\right)+O\left(\frac{1}{K}\right)=\frac{9}{11} A + O\left(\frac{1}{P^2} + \frac{1}{K}\right).\\
\label{eq: 5.5}
\sum_{\substack{m=1 \\ \gcd(m,P)=1}}^K\frac{\eta(2m)}{m^2} & = \frac{8}{11}A+O\left(\frac{1}{P^2} + \frac{1}{K}\right).
\end{align}
\end{lemma}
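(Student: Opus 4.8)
The plan is to exploit the fact that $m \mapsto \eta(m)/m^2$ is a multiplicative function and to evaluate each of the five sums by passing to the associated absolutely convergent Euler product, bounding the truncation error trivially. Since $\eta(m)$ depends only on the radical of $m$, the function $g(m) := \eta(m)/m^2$ is multiplicative with
\[
\sum_{a\ge 0} g(p^a) \;=\; 1 + \frac{p}{p+1}\sum_{a\ge 1} p^{-2a} \;=\; 1 + \frac{p}{(p+1)^2(p-1)},
\]
so $\sum_{m\ge 1} g(m) = \prod_p\bigl(1 + \tfrac{p}{(p+1)^2(p-1)}\bigr) = A$, the product converging absolutely. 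For the cutoff, $0 < \eta(m) \le 1$ gives $\sum_{m > K} g(m) \le \sum_{m>K} m^{-2} \ll 1/K$; this immediately yields \eqref{eq: 5.1}.

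For \eqref{eq: 5.2} I would restrict the product to odd $m$, i.e.\ delete the local factor at $p=2$, which equals $1 + \tfrac29 = \tfrac{11}{9}$; hence the sum over odd $m$ is $\tfrac{9}{11}A$, again with an $O(1/K)$ tail. For \eqref{eq: 5.3} I would write $m = 2^a m'$ with $m'$ odd and note that $\eta(2m) = \tfrac23\,\eta(m')$ for every $a \ge 0$ (the factor $\tfrac{p}{p+1}$ at $p=2$ equals $\tfrac23$ and always occurs in $\eta(2m)$), so
\[
\sum_{m\ge 1}\frac{\eta(2m)}{m^2} \;=\; \frac23\Bigl(\sum_{a\ge 0} 4^{-a}\Bigr)\Bigl(\sum_{m'\ \mathrm{odd}}\frac{\eta(m')}{m'^2}\Bigr) \;=\; \frac23\cdot\frac43\cdot\frac{9}{11}A \;=\; \frac{8}{11}A,
\]
and the tail $\sum_{m>K}\eta(2m)/m^2 \le \sum_{m>K}m^{-2} \ll 1/K$ is again trivial. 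Finally, \eqref{eq: 5.4} and \eqref{eq: 5.5} follow from \eqref{eq: 5.2} and \eqref{eq: 5.3} by additionally deleting the local factor at $P$; since that factor is $1 + \tfrac{P}{(P+1)^2(P-1)} = 1 + O(P^{-2})$, removing it multiplies the main term by $1 + O(P^{-2})$, which produces the advertised $O(1/P^2)$ term on top of the $O(1/K)$ truncation error.

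I do not expect any genuine obstacle here: the computation is elementary, and the only thing that requires attention is the bookkeeping of the rational constants (checking that the factor at $2$ is $\tfrac{11}{9}$ and that $\tfrac23\cdot\tfrac43\cdot\tfrac{9}{11} = \tfrac{8}{11}$), together with the observation that the geometric sum over the $2$-part in \eqref{eq: 5.3} and \eqref{eq: 5.5} is legitimate — which it is, because the full infinite sum is evaluated first and the finite cutoff only contributes the uniform $O(1/K)$ error noted above.
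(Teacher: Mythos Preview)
Your proof is correct and follows essentially the same route as the paper: pass to the Euler product of the multiplicative function $\eta(m)/m^2$, identify each local factor, and bound the tail by $\sum_{m>K} m^{-2}\ll 1/K$. The only cosmetic difference is in \eqref{eq: 5.3}, where the paper writes $\sum_m \eta(2m)/m^2 = 4\sum_{m\ \mathrm{even}}\eta(m)/m^2$ and then uses \eqref{eq: 5.1}$-$\eqref{eq: 5.2} to get $4\bigl(A-\tfrac{9}{11}A\bigr)=\tfrac{8}{11}A$, whereas you factor out the $2$-part directly; both computations are equivalent.
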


\begin{proof}
To see \eqref{eq: 5.1} we use the fact that
\[
\sum_{m=1}^\infty \frac{\eta(m)}{m^2} = \prod_p \left(\sum_{k=0}^{\infty}\frac{\eta(p^k)}{p^{2k}} \right) = \prod_p\left( 1+\frac{1}{p(p+1)} + \frac{1}{p(p+1)^3} +\ldots\right).
\]
The first equality follows by using the formula for the geometric series and truncating the sum at $m=K$.

For \eqref{eq: 5.2}, we note that the sum is over all $m$ that are odd.
Therefore, in the Euler product description we need to multiply over all primes except $p=2$.
Note that
\[
A = \left(1 + \frac{2}{9} \right) \prod_{p\geq 3}\left(1+\frac{p}{(p+1)^2(p-1)}\right) = \frac{11}{9} \prod_{p\geq 3}\left(1+\frac{p}{(p+1)^2(p-1)}\right).
\]
Rearranging the terms gives the desired result.

For \eqref{eq: 5.3}, we observe that
\[
\sum_{\substack{m=1}}^K\frac{\eta(2m)}{m^2} = 4\sum_{\substack{m=1}}^K\frac{\eta(2m)}{(2m)^2} = 4\sum_{\substack{m=1 \\ m \text{ even}}}^K\frac{\eta(m)}{m^2}.
\]
But, we also know that
\[
\sum_{\substack{m=1}}^K\frac{\eta(m)}{m^2} = \sum_{\substack{m=1 \\ m\text{ odd}}}^K\frac{\eta(m)}{m^2} + \sum_{\substack{m=1 \\ m\text{ even}}}^K\frac{\eta(m)}{m^2}.
\]
The desired relation can then be obtained from \eqref{eq: 5.1} and \eqref{eq: 5.2}. 

Note that \eqref{eq: 5.4} and \eqref{eq: 5.5} can be proven in the same way and are already stated in \cite[Lemma~6.5]{Zub23}. 
\end{proof}

\section{Main Result and Proof}

The main result we prove in this note is the following

\begin{theorem}
\label{main theorem}
Let $H^{\new}(N)$ be a Hecke basis for trivial character for weight 2 cusp forms for $\Gamma_0(N)$ with $f\in H^{\new}(N)$ normalized to have leading coefficient 1.
Let $\epsilon(f)$ be the root number of $f$ and let $a_f(p)$ be the $p$-th Fourier coefficient of $f$, and set $\lambda_f(p) = \frac{a_f(p)}{\sqrt{p}}$.
Let $P$ be a prime, and suppose that the parameters $P$, $X$, and $Y$ go to infinity.
Further suppose that $Y= (1+o(1))X^{1-\delta}$ and $P^2\ll X^{1+\delta_2}$
where $0<\delta_2 < \delta < \frac{9}{11}$ and $\frac{\delta}{9} + \frac{\delta_2}{2} < \frac19$.
Set $\delta'=\delta_2 - \frac{\delta}{2}$.
Then writing $y=\frac{P^2}{X}$,
\begin{align*}
\frac{\displaystyle\sum_{\substack{N\in [X, X+Y] \\ N \text{sq-free}}}\sum_{f\in H^{\new}(N)} P \lambda_{f}(P^2)\epsilon(f)}{\displaystyle\sum_{\substack{N\in [X, X+Y] \\ N \text{sq-free}}}\sum_{f\in H^{\new}(N)} 1} = & \frac{12}{\pi \prod_p\left( 1 - \frac{1}{p(p+1)}\right)}\left(A\sqrt{y} + B\sum_{r\leq \sqrt{y}} C(r)\left( \sqrt{4y-r^2}\right) - \pi y\right) \\
& + O_\epsilon\left( X^{\delta' + \epsilon} +\frac{1}{P}\right).
\end{align*}

\end{theorem}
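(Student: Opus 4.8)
The plan is to feed the Skoruppa--Zagier trace formula (Theorem~\ref{Skoruppa-Zagier} with $k=2$) into the averaged class-number estimates already established, to treat the exceptional contributions via Section~\ref{section: the remaining terms}, and then to divide by the count in the denominator. First I would split the outer sum over $N$ in the numerator according to whether $P\mid N$ or $P\nmid N$. For the levels with $P\mid N$ one has $P\lambda_f(P^2)=1/P$, and the trivial bound worked out in Section~\ref{section: the remaining terms} shows that, after multiplying by $\zeta(2)\pi/(XY)$, their total contribution is $O(1/P+1/Y)$, which is absorbed into the claimed error. For each square-free $N$ with $P\nmid N$, Theorem~\ref{Skoruppa-Zagier} gives
\[
\sum_{f\in H^{\new}(N)}P\lambda_f(P^2)\epsilon(f)=\frac{H_1(-4P^2N)}{2}+\sum_{0<r\le 2P/\sqrt{N}}H_1(r^2N^2-4P^2N)-(1+P+P^2),
\]
so it remains to sum each of the three pieces over square-free $N\in[X,X+Y]$.

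I would then dispatch the pieces one at a time, all normalized by $\zeta(2)\pi/(XY)$. For the first piece, Proposition~\ref{our version of Nina P3.1} --- which expands $H_1(-4P^2N)$ as $h(-4P^2N)+h(-P^2N)+h(-4N)+h(-N)+O(1)$ and averages each term via the Gauss class number formula together with \cite[Lemma~6.7]{Zub23} --- yields the main term $AP/\sqrt{X}=A\sqrt{y}$, where $y=P^2/X$. For the second piece, Corollary~\ref{our version of Nina P3.2}, which rests on Proposition~\ref{our version of Nina P3.3} and the estimate of $S_{n,d,r}$ carried out in Section~\ref{Section: Main Term Calculation 2}, supplies the bulk of the $r$-sum and produces $B\sum_{r\le\sqrt{y}}C(r)\sqrt{4y-r^2}$; the residual, $N$-dependent range $2P/\sqrt{X+Y}\le r\le 2P/\sqrt N$ is bounded in Section~\ref{section: the remaining terms} by $O(X^{\delta_2-\delta/2+\epsilon})=O(X^{\delta'+\epsilon})$. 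For the third piece, the count of square-free integers in the short interval $[X,X+Y]$ (carried out explicitly in Section~\ref{section: the remaining terms}) gives $\frac{\zeta(2)\pi}{XY}\sum_{N}(1+P+P^2)=\pi y+O(P/X+P^2/(XY^{1/2}))$, which contributes the $-\pi y$ term. Assembling the three pieces shows that the normalized numerator equals $A\sqrt{y}+B\sum_{r\le\sqrt{y}}C(r)\sqrt{4y-r^2}-\pi y$ plus an admissible error.

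For the denominator I would use that the newform dimension at a square-free level $N$ has main term $\tfrac{1}{12}\varphi(N)$ (with lower-order elliptic and cusp corrections of size $O(\tau(N))$), combined with the short-interval asymptotic $\sum_{N\in[X,X+Y],\ N\text{ sq-free}}\varphi(N)=\tfrac{XY}{\zeta(2)}\prod_p(1-\tfrac{1}{p(p+1)})+O(\text{l.o.t.})$; this is the computation performed in \cite{Zub23}. Hence the denominator equals $\tfrac{XY}{12\zeta(2)}\prod_p(1-\tfrac{1}{p(p+1)})$ up to an error negligible against the main term, and dividing the numerator asymptotic by it produces the prefactor $\tfrac{12}{\pi\prod_p(1-\frac{1}{p(p+1)})}$.

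The final step is pure bookkeeping: collect the error terms from Proposition~\ref{our version of Nina P3.1}, Corollary~\ref{our version of Nina P3.2} and Section~\ref{section: the remaining terms}, and verify that under $Y=(1+o(1))X^{1-\delta}$, $P^2\ll X^{1+\delta_2}$, $0<\delta_2<\delta<\tfrac{9}{11}$ and $\tfrac{\delta}{9}+\tfrac{\delta_2}{2}<\tfrac19$ each of them is $O_\epsilon(X^{\delta'+\epsilon}+1/P)$. I expect this verification to be the main nuisance: one has to check that the $r$-tail term $X^{\delta'+\epsilon}$ is indeed the dominant error (so that the many lower-order terms involving $Y$, $T$ and $P$ in Corollary~\ref{our version of Nina P3.2} and the auxiliary propositions are genuinely subsumed), and that the condition $\delta_2<\delta$ is exactly what forces $X^{\delta'}$ to be of smaller order than the main term $A\sqrt{y}\asymp X^{\delta_2/2}$. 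All of the genuinely analytic work --- the bounds for $S_{n,d,r}$ and the short-interval equidistribution of square-free numbers in residue classes --- has already been done in the preceding sections, so nothing beyond this assembly is required.
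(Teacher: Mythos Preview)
Your proposal is correct and follows essentially the same route as the paper: split off the $P\mid N$ levels via Section~\ref{section: the remaining terms}, apply the $k=2$ trace formula for the remaining $N$, feed the three pieces into Proposition~\ref{our version of Nina P3.1}, Corollary~\ref{our version of Nina P3.2} (plus the $r$-tail bound in Section~\ref{section: the remaining terms}), and the square-free count respectively, take the denominator from \cite[\S3.4]{Zub23}, and finish by checking that every auxiliary error is $O_\epsilon(X^{\delta'+\epsilon}+1/P)$ under the stated constraints on $\delta,\delta_2$. One small slip: for $P\mid N$ you have $\lambda_f(P^2)=1/P$ and hence $P\lambda_f(P^2)=1$, not $1/P$; the bound $O(1/P+1/Y)$ you quote from Section~\ref{section: the remaining terms} is nonetheless correct.
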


\begin{proof}
In \cite[\S3.4]{Zub23}, it is proven that for $N$ square-free 
\[
\sum_{\substack{N\in [X, X+Y]\\ N \text{ sq-free}}} \sum_{f\in H^{\new}(N)} 1 = \frac{XY}{12 \zeta(2)}\prod_p \left(1- \frac{1}{p(p+1)}\right) + O\left(YX^\epsilon + X^{\frac85 + \epsilon} + Y^2 \right).
\]

Set $y=\frac{P^2}{X}$.
It will follow from Proposition~\ref{our version of Nina P3.1}, Corollary~\ref{our version of Nina P3.2}, and calculations in \S\ref{section: the remaining terms} that 

\begin{align*}
\frac{\zeta(2)\pi}{XY} \sum_{\substack{N\in [X, X+Y]\\ N \text{ sq-free}}} \sum_{f\in H^{\new}(N)} P\lambda_f(P^2) \epsilon(f) & = A\sqrt{y} + B \sum_{r\le \sqrt{y}} C(r)\left(\sqrt{4y - r^2}\right)\\
& - \pi y + O_\epsilon\left( X^{\delta'+\epsilon}+\frac{1}{P}\right).
\end{align*}
where $A= \prod_{p}\left(1 + \frac{p}{(p+1)^2 (p-1)}\right)$, $B= \prod_{p}\frac{p^4 - 2p^2 - p +1}{(p^2 - 1)^2}$, and $C(r) = \prod_{p\mid r}\left( 1+ \frac{p^2}{p^4 - 2p^2 - p + 1}\right)$.
Indeed, the error terms in Proposition~\ref{our version of Nina P3.1}, Corollary~\ref{our version of Nina P3.2}, and the calculations in \S\ref{section: the remaining terms} gives that the following inequalities are true (where we have set $\delta'=\delta_2-\frac{\delta}{2}$ and used the fact that
$\delta_2<\delta<\frac{2}{11}$ and $\delta_2+9\delta<2$ to obtain)
{\allowdisplaybreaks
\begin{align*}
    \frac{7}{12}\delta_2+\frac{5}{6}\delta+\left(\frac{7}{12}+\frac{1}{12}-\frac{5}{6}\right)=\delta'+\frac{8}{6}\delta-\frac{5}{12}\delta_2-\frac{1}{6}<\delta'\\
    \frac{\delta_2}{2}-\delta<\delta'\\
     \frac{11}{10}\delta_2+\frac{2}{5}\delta-\frac{1}{5}=\delta'+\frac{1}{10}\delta_2+\frac{9}{10}\delta-1/5<\delta'\\
     \delta_2- \delta<\delta'\\
     \delta_2+\frac{13}{18}\delta-\frac{2}{9}=\delta'+\frac{22}{18}\delta-\frac{2}{9}<\delta'\\
     \delta_2+\frac{1}{9}\delta-\frac{1}{9}=\delta'+\frac{11}{18}\delta-\frac{1}{9}<\delta'.
\end{align*}}
Thus, the cumulative error term is of size $O\left(X^{\delta'+\epsilon}+\frac{1}{P}\right)$.
\end{proof}

\bibliographystyle{amsalpha}
\bibliography{references}

\end{document}